 \newtheorem{thm}{Theorem}[section]
 \newtheorem{cor}[thm]{Corollary}
 \newtheorem{lem}[thm]{Lemma}
 \newtheorem{prop}[thm]{Proposition}
 \theoremstyle{definition}
 \newtheorem{defn}[thm]{Definition}
 \newtheorem{rem}[thm]{Remark}
 \newtheorem{ex}[thm]{Example}
 \numberwithin{equation}{section}
\begin{document}

\title[The Static Maxwell System in Inhomogeneous Media]
{The Static Maxwell System in Three Dimensional Inhomogeneous Media,
 \\ Generalized Non-Euclidean Modification of the System $(R)$ and Fueter's Construction}

\author[D. Bryukhov]{Dmitry Bryukhov}

\address{%
Science City Fryazino, Russia}

\email{bryukhov@mail.ru  \\   https://orcid.org/0000-0002-8977-3282}

 \subjclass{Primary 35Q61, 78A30; Secondary 35Q05, 30G20, 30C65, 30G35.}

 \keywords{The elliptic Euler-Poisson-Darboux equation in cylindrical coordinates;  the electric field gradient tensor;
$\alpha$-meridional mappings of the second kind; the radially holomorphic potential; harmonic meridional mappings of the second
kind}

\begin{abstract}
This paper extends approach of our joint paper with K\"{a}hler and
recent paper of the author, published in 2021, on problems of the
static Maxwell system in three dimensional inhomogeneous media.
Applied pseudoanalytic function theory developed by Kravchenko et
al. allows to characterize, in particular, meridional and transverse
fields in cylindrically layered media.
Geometric properties of the electric field gradient ($EFG$) tensor
within a wide range of meridional fields allows us to introduce the
concept of $\alpha$-meridional mappings of the first and second kind
depending on the values of a real parameter $\alpha$.
In case $\alpha =1$ tools of the radially holomorphic potential
provide essentially new meridional models in the context of
generalized axially symmetric potential theory (GASPT). Integral
representations of Bessel functions of the first kind of integer
order and the reduced quaternionic  argument  are first established.
In case $\alpha =0$ geometric properties of harmonic meridional
mappings of the second kind are described. Some open problems in
three dimensional inhomogeneous anisotropic media are discussed
using a generalized Riemannian modification of the system $(R)$.

\end{abstract}

\date{Received: date / Accepted: date}

\maketitle

\section{Introduction, Preliminaries, and Notations}

\subsection{Introduction}
Contemporary aspects of geometrical optics and geo-electrostatics
involve a rich variety of analytic models in the context of the
static Maxwell system in three dimensional inhomogeneous isotropic
media described by a variable $C^1$-coefficient $\phi=
\phi(x_0,x_1,x_2)>0$ (see, e.g.,
\cite{BornWolf:2003,Chew,KhmKravOv:2010,Wait:1982,SvetGub,Br:Hefei2020}):
\begin{equation}
  \left\{
\begin{array}{l}
  \mathrm{div} \, ( \phi \vec E ) = 0, \\
  \mathrm{curl}{\ \vec E} = 0,
 \end{array}
\right.
\label{isotropic-electrostatic-Maxwell-system-3}
\end{equation}
where the vector $\vec E =(E_0, E_1, E_2)$ is known as the electric field strength.
 The coefficient $\phi= \phi(x_0,x_1,x_2)$ in the domain of geometrical optics is interpreted as the dielectric
permittivity $\varepsilon = \varepsilon(x_0, x_1, x_2)$, while in
the domain of geo-electrostatics as the electrical conductivity $
\sigma = \sigma(x_0, x_1, x_2)$.

 The space $ \mathbb R^3=\{(x_0, x_1, x_2)\}$ in our setting includes the longitudinal variable $x_0$.
 The vector $\vec E$ in simply connected open domains $\Lambda \subset \mathbb R^3$
 satisfies the relation $ \vec E = \mathrm{grad} \ h$ (up to the sign of the scalar function $h$).
 The electrostatic potential $ h= h(x_0,x_1,x_2)$ allows us to reduce $C^1$-solutions of the system $(\ref{isotropic-electrostatic-Maxwell-system-3})$
to $C^2$-solutions of the continuity equation:
 \begin{equation}
 \mathrm{div} \, (\phi \ \mathrm{grad}{\ h}) = 0.
\label{isotropic-Lap-Bel-eq-3}
  \end{equation}

The static Maxwell system
$(\ref{isotropic-electrostatic-Maxwell-system-3})$ may be written as
 \begin{equation}
 \left\{
     \begin{array}{l}
       \phi \ \mathrm{div}\ { \vec E}
+ \frac{\partial{\phi}}{\partial{x_0}}E_0 + \frac{\partial{\phi}}{\partial{x_1}}E_1 + \frac{\partial{\phi}}{\partial{x_2}}E_2 =0,  \\
      \frac{\partial{E_0}}{\partial{x_1}}= \frac{\partial{E_1}}{\partial{x_0}}, \ \ \ \ \
      \frac{\partial{E_0}}{\partial{x_2}}= \frac{\partial{E_2}}{\partial{x_0}}, \\
      \frac{\partial{E_1}}{\partial{x_2}}=\frac{\partial{E_2}}{\partial{x_1}},
     \end{array}
  \right.
 \label{Bryukhov-isotropic-Maxwell-system}
\end{equation}
and the continuity equation $(\ref{isotropic-Lap-Bel-eq-3})$,
respectively, may be written as (see, e.g., \cite{Wait:1982})
\begin{equation}
\phi \left( \frac{{\partial}^2{h}}{{\partial{x_0}}^2} + \frac{{\partial}^2{h}}{{\partial{x_1}}^2} + \frac{{\partial}^2{h}}{{\partial{x_2}}^2} \right)
+   \frac{\partial{\phi}}{\partial{x_0}} \frac{\partial{h}}{\partial{x_0}} +
 \frac{\partial{\phi}}{\partial{x_1}} \frac{\partial{h}}{\partial{x_1}} +
  \frac{\partial{\phi}}{\partial{x_2}} \frac{\partial{h}}{\partial{x_2}} =0.
\label{Lap-Bel-eq-3-general}
\end{equation}

 The equation
\begin{equation}
 h(x_0, x_1, x_2) = C = const
\label{equipotential}
\end{equation}
allows us to establish important properties of the equipotential
surfaces in simply connected open domains $\Lambda \subset \mathbb
R^3$. Using the total differential $dh$, the Eq.
$(\ref{equipotential})$ is reformulated as
\begin{equation}
 dh  = \frac{\partial{h}}{\partial{x_0}} d{x_0} + \frac{\partial{h}}{\partial{x_1}} d{x_1}
+ \frac{\partial{h}}{\partial{x_2}} d{x_2} = 0.
\end{equation}

\begin{defn}
Let $\varsigma$ be a real independent variable. Assume that
homogeneous first-order partial differential equation
\begin{equation}
  \frac{\partial{h}}{\partial{x_0}} W_0 + \frac{\partial{h}}{\partial{x_1}} W_1
+ \frac{\partial{h}}{\partial{x_2}} W_2 = 0
\label{PDE}
\end{equation}
 is satisfied in $ \Lambda$, such that
$$
 W_0(x_0, x_1, x_2) = \frac {dx_0}{d\varsigma}, \ \
  W_1(x_0, x_1, x_2) = \frac {dx_1}{d\varsigma}, \ \
 W_2(x_0, x_1, x_2)= \frac {dx_2}{d\varsigma}.
 $$
 The electrostatic potential $ \ h= h(x_0, x_1, x_2)$  is
called the first integral for the characteristic vector field $ \vec
W = (W_0, W_1, W_2)$ in  $ \Lambda$ (see, e.g., \cite{ArnoldGeom}).
\end{defn}

The Eq. $(\ref{PDE})$ is geometrically characterized
 as the orthogonality condition for vector fields $\vec E$ and $\vec W$:
\begin{equation}
  ( \vec E, \vec W ) = (\mathrm{grad} \ h, \vec W ) = 0.
\label{orthogonality-Maxwell-electric}
\end{equation}

  The Eq. $(\ref{orthogonality-Maxwell-electric})$ is satisfied, in particular,
  under condition of $ \vec E = \mathrm{grad} \ h = (u_0,
-u_1, -u_2) = 0$.

\begin{defn}
Let $\Lambda \subset \mathbb R^3$ be a simply connected open domain.
Every point $x^* \in \Lambda$ under condition of $ \mathrm{grad} \
h(x^*) =0$ is called a critical point of the electrostatic potential
$h = h(x)$  in $\Lambda$. The set of critical points is called the
critical set of $h(x)$ in $\Lambda$.
 \end{defn}

Inhomogeneous isotropic media, whose properties are constant throughout every plane perpendicular to a fixed direction,
are referred to as layered media (see, e.g., \cite {BornWolf:2003,Chew}).

The main goal of this paper is to compare applications of two
families of generalizations of the Cauchy-Riemann system with
variable coefficients, in accordance with the static Maxwell system
 in special planarly layered media, where $\phi= \phi(x_2^{-\alpha})$, and in accordance
with the static Maxwell system  in special cylindrically layered
media, where $\phi= \phi(\rho^{-\alpha})$ $(\alpha \in \mathbb{R})$,
respectively.

The paper is organized as follows. In Section 2, we present
$\alpha$-hyperbolic non-Euclidean modification of the system $(R)$
and study new properties of $\alpha$-hyperbolic harmonic potentials
in Cartesian coordinates using Bessel functions of the first and
second kind of real order. New applications of Vekua type systems in
the context of hyperbolic function theory in the plane are
demonstrated. In Section 3, we present $\alpha$-axial-hyperbolic
non-euclidean modification of the system $(R)$ and study new
properties of $\alpha$-axial-hyperbolic harmonic potentials in
cylindrical coordinates using Bessel functions of the first and
second kind of real order. Criterion of joint class of
$\alpha$-hyperbolic harmonic and $\alpha$-axial-hyperbolic harmonic
potentials in Cartesian coordinates is formulated. In Section 4, we
present $(\alpha_1, \alpha_2)$-bi-hyperbolic non-Euclidean
modification of the system $(R)$ in the context of generalized
bi-axially symmetric potential theory.
 Some properties of $(\alpha_1, \alpha_2)$-bi-hyperbolic harmonic potentials
  and $\alpha$-hyperbolic harmonic potentials in Cartesian coordinates are compared.
In Section 5, we focus on the specifics of meridional fields in
cylindrically layered inhomogeneous media. Criterion of joint class
of $\alpha$-hyperbolic harmonic and $\alpha$-axial-hyperbolic
harmonic potentials in cylindrical coordinates is formulated. The
electrostatic potential of every meridional field in special
cylindrically layered media satisfies the elliptic
Euler-Poisson-Darboux equation in cylindrical coordinates. New
concept of $\alpha$-meridional mappings of the first and second
kind, where $\alpha \in \mathbb{R}$, is introduced. In Section 6, in
case $\alpha=1$  the radially holomorphic potential is presented as
an extension of the complex potential in the context of GASPT. A
wide range of meridional electrostatic fields is provided by means
of the reduced quaternionic Fourier-Fueter cosine and sine
transforms of real-valued originals. Applied properties of Bessel
functions of the first kind of integer order and the reduced
quaternionic argument are first demonstrated. In Section 7, in case
$\alpha=0$ geometric properties of harmonic meridional mappings of
the second kind are characterized using Bessel function of the first
kind of order zero. In Section 8, new generalized Riemannian
modification of the system (R) is described into the framework of
problems of the static Maxwell system in three dimensional
inhomogeneous anisotropic media.

\subsection{Preliminaries}

 General class of $C^1$-solutions of the system $(\ref{Bryukhov-isotropic-Maxwell-system})$
  may be equivalently represented as general class of $C^1$-solutions of the system
 \begin{equation}
 \left\{
     \begin{array}{l}
       \phi  \left( \frac{\partial{u_0}}{\partial{x_0}}-
      \frac{\partial{u_1}}{\partial{x_1}}-
      \frac{\partial{u_2}}{\partial{x_2}}\right) + \left(\frac{\partial{\phi}}{\partial{x_0}}u_0 - \frac{\partial{\phi}}{\partial{x_1}}u_1 - \frac{\partial{\phi}}{\partial{x_2}}u_2\right) =0,  \\
      \frac{\partial{u_0}}{\partial{x_1}}=-\frac{\partial{u_1}}{\partial{x_0}}, \ \ \ \ \
      \frac{\partial{u_0}}{\partial{x_2}}=-\frac{\partial{u_2}}{\partial{x_0}}, \\
      \frac{\partial{u_1}}{\partial{x_2}}=\frac{\partial{u_2}}{\partial{x_1}},
     \end{array}
  \right.
\label{Bryukhov-Kaehler-3}
\end{equation}
 where $ \vec E = (u_0, -u_1, -u_2)$.
 This system was first constructed by the author jointly with K\"{a}hler at the University of Aveiro, November 2015.

 We have to deal with the Laplace-Beltrami equation
\begin{equation}
 \Delta_{B} \ h := \phi^{-3} \mathrm{div}( \phi \ \mathrm{grad}{\ h}) = 0
\label{phi-conformal-Laplace-Beltrami}
\end{equation}
with respect to the conformal metric  (see, e.g., \cite{Eisenhart:Riem,Ahlfors:1981})
 \begin{equation}
\ ds^2 = \phi^2 (d{x_0}^2 + d{x_1}^2 + d{x_2}^2).
\label{Riemannian conformal metric}
\end{equation}

 We have to deal with Euclidean geometry in case $\phi=const$.
In particular, some new properties of analytic solutions of the
static Maxwell system in three dimensional homogeneous media
\begin{equation}
  \left\{
\begin{array}{l}
  \mathrm{div} \, { \vec E} = 0, \\
  \mathrm{curl}{\ \vec E} = 0,
 \end{array}
\right.
\label{Riesz-Maxwell-system-3}
\end{equation}
where  $ \ (E_0, E_1, E_2) := (u_0, -u_1, -u_2)$,  have been studied
in the context of quaternionic analysis in $\mathbb R^3$ by Brackx, Delange, Sommen et al.
by means of the reduced quaternion-valued monogenic functions $u = u_0 + iu_1 + ju_2$
 whose components $u_l = u_l(x_0, x_1, x_2)$ $(l = 0, 1, 2)$ are harmonic functions of real variables $x_0, x_1, x_2$
(see, e.g., \cite{BraDelSom:1982,Leut:2000,Del:2007,GM:2011}).

  The electrostatic potential $h =h(x_0, x_1, x_2)$ in homogeneous media satisfies the Laplace equation:
$$
\mathrm{div} \, (\mathrm{grad}{\ h}) = \Delta{h} = 0.
$$

  General class of analytic solutions of the system $(\ref{Riesz-Maxwell-system-3})$
  is equivalently represented as general class of analytic solutions of the system
$$
(R)  \left\{
\begin{array}{l}
  \frac{\partial{u_0}}{\partial{x_0}} -
  \frac{\partial{u_1}}{\partial{x_1}} -
  \frac{\partial{u_2}}{\partial{x_2}} = 0, \\
 \frac{\partial{u_0}}{\partial{x_1}}= - \frac{\partial{u_1}}{\partial{x_0}},
 \ \ \ \frac{\partial{u_0}}{\partial{x_2}}= - \frac{\partial{u_2}}{\partial{x_0}}, \\
 \frac{\partial{u_1}}{\partial{x_2}}= \frac{\partial{u_2}}{\partial{x_1}}.
 \end{array}
\right.
$$
This system is called the system $(R)$ in honor of Riesz (see, e.g., \cite{Leut:2000,Del:2007,GM:2011,MorAvetGuer:2013}).

As noted by several authors, the theory of monogenic functions  in the context of quaternionic analysis in $\mathbb R^3$
 (see, e.g., \cite{BraDelSom:1982,Leut:2000,Del:2007,GM:2009-CUBO,GM:2011})
 does not cover the set of three dimensional M\"{o}bius transformations
(see, e.g., \cite{Ahlfors:1981,GM:2009,GM:2010}). The reduced
quaternionic power functions $u= u_0 + iu_1 + ju_2 = (x_0 + ix_1 +
jx_2)^n$ $(n \in \mathbb{Z})$ are not included into the theory of
the reduced quaternion-valued monogenic functions (see, e.g.,
\cite{ErOrelVie:2017}).

 The system $(\ref{Bryukhov-Kaehler-3})$ may be considered as new generalized non-Euclidean modification of the system $(R)$
with respect to the conformal metric $(\ref{Riemannian conformal metric})$.

The Hessian matrix $\mathbf{H_{l m}}(h) =
\frac{\partial^2{h}}{\partial{x_l} \partial{x_m}} \ $ $ ( l, m =
0,1,2)$ of the electrostatic potential $h = h(x_0, x_1, x_2)$ is
interpreted as the electric field gradient ($EFG$) tensor
$\mathbf{J_{l m}}(\vec E) = \frac{\partial{E_l}}{\partial{x_m}} \ $
$ ( l, m = 0,1,2)$.

\begin{defn}
Every point $x \in \Lambda$ under condition of $\det\mathbf{J}(\vec
E(x)) =0$ is called a degenerate point of the $EFG$ tensor
$\mathbf{J}(\vec E(x))$ in $\Lambda$.
 \end{defn}

Properties of the sets of degenerate points of continuously differentiable mappings and the $EFG$ tensors
are of particular interest to the catastrophe theory  (see, e.g., \cite{BorisTar:1979,PosStew,Gilmore:1993}).

 The characteristic equation of the $EFG$ tensor in our setting
\begin{equation}
 \left(
\begin{array}{lll}
  \frac{\partial{E_0}}{\partial{x_0}} & \frac{\partial{E_0}}{\partial{x_1}} & \frac{\partial{E_0}}{\partial{x_2}} \\
 \frac{\partial{E_1}}{\partial{x_0}}  & \frac{\partial{E_1}}{\partial{x_1}}  &  \frac{\partial{E_1}}{\partial{x_2}} \\
 \frac{\partial{E_2}}{\partial{x_0}}  & \frac{\partial{E_2}}{\partial{x_1}}  &  \frac{\partial{E_2}}{\partial{x_2}}
 \end{array}
\right) =
\left(
\begin{array}{lll}
 \ \ \frac{\partial{u_0}}{\partial{x_0}} &  \ \ \frac{\partial{u_0}}{\partial{x_1}} & \ \ \frac{\partial{u_0}}{\partial{x_2}} \\
 -\frac{\partial{u_1}}{\partial{x_0}}  & -\frac{\partial{u_1}}{\partial{x_1}}  &  -\frac{\partial{u_1}}{\partial{x_2}} \\
 -\frac{\partial{u_2}}{\partial{x_0}}  & -\frac{\partial{u_2}}{\partial{x_1}}  &  -\frac{\partial{u_2}}{\partial{x_2}}
 \end{array}
\right)
  \label{Hessian matrix}
\end{equation}
  is expressed as
\begin{equation}
  \lambda^3 - I_{\mathbf{J}(\vec E)} \lambda^2 + II_{\mathbf{J}(\vec E)} \lambda - III_{\mathbf{J}(\vec E)} =
  0,
\label{characteristic lambda}
\end{equation}
  $ \ I_{\mathbf{J}(\vec E)} = \lambda_0 + \lambda_1 + \lambda_2$,
$ \ II_{\mathbf{J}(\vec E)} = \lambda_0 \lambda_1 + \lambda_0
\lambda_2 + \lambda_1 \lambda_2$, $\ III_{\mathbf{J}(\vec E)} =
\lambda_0 \lambda_1 \lambda_2$.

The principal invariants of the $EFG$ tensor $\mathbf{J}(\vec E)$
are given by formulas
\begin{displaymath}
 \left\{
  \begin{array}{l}
I_{\mathbf{J}(\vec E)} = \mathrm{tr}J(\vec E)= J_{00} + J_{11} + J_{22}, \\
II_{\mathbf{J}(\vec E)} = J_{00}J_{11} + J_{00}J_{22} + J_{11}J_{22} - (J_{01})^2 - (J_{02})^2 - (J_{12})^2, \\
 III_{\mathbf{J}(\vec E)} = \mathrm{det}J(\vec E) = J_{00}J_{11}J_{22} + 2J_{01}J_{02}J_{12}
  - J_{00}(J_{12})^2 \\ - J_{11}(J_{02})^2  - J_{22}(J_{01})^2.
  \end{array}
  \right.
\end{displaymath}

    Some new classes of exact solutions  of the static Maxwell system in special planarly layered media
 described by a variable coefficient  $\phi(x_2) = {x_2}^{-1}$   $(x_2 > 0)$:
\begin{equation}
  \left\{
\begin{array}{l}
  \mathrm{div} \, ({x_2}^{-1} \vec E ) = 0, \\
  \mathrm{curl}{\ \vec E} = 0,
 \end{array}
\right.
\label{Leutwiler-Maxwell-system-3}
\end{equation}
 where $ \ (E_0, E_1, E_2) := (u_0, -u_1, -u_2)$, in fact, have been studied by Leutwiler in the context of modified quaternionic analysis in $\mathbb R^3$
by means of the reduced quaternionic power series  with complex
coefficients (see, e.g., \cite{Leut:CV20,Leut:2000}).

General class of $C^1$-solutions of the system  $(\ref{Leutwiler-Maxwell-system-3})$
  is equivalently represented as general class of $C^1$-solutions  of the system
$$
(H)  \left\{
\begin{array}{l}
  x_2 \left(\frac{\partial{u_0}}{\partial{x_0}}-
  \frac{\partial{u_1}}{\partial{x_1}}-
  \frac{\partial{u_2}}{\partial{x_2}}\right)  + u_2 = 0, \\
 \frac{\partial{u_0}}{\partial{x_1}}=-\frac{\partial{u_1}}{\partial{x_0}},  \ \ \ \ \
 \frac{\partial{u_0}}{\partial{x_2}}=-\frac{\partial{u_2}}{\partial{x_0}}, \\
 \frac{\partial{u_1}}{\partial{x_2}}=\frac{\partial{u_2}}{\partial{x_1}}.
 \end{array}
\right.
$$
This system is called the system $(H)$ in honor of Hodge (see, e.g., \cite{Leut:CV20,Leut:2000}).
The system $(H)$ may be considered as a hyperbolic non-Euclidean modification of the system $(R)$
with respect to the hyperbolic metric defined on the halfspace $\{x_2 > 0\}$ by formula
(see, e.g., \cite{Ahlfors:1981,Leut:CV20,Leut:2000}):
$$
 ds^2  = \frac {d{x_0}^2 + d{x_1}^2 + d{x_2}^2}{x_2^2}.
$$

Independently new classes of exact solutions  of the static Maxwell system in special cylindrically layered media
 described by a variable coefficient  $\phi( \rho) = {\rho}^{-1}$ $( \rho > 0)$:
\begin{equation}
  \left\{
\begin{array}{l}
  \mathrm{div} \, ({\rho}^{-1} \vec E ) = 0, \\
  \mathrm{curl}{\ \vec E} = 0,
 \end{array}
\right.
\label{Bryukhov-Maxwell-system-3}
\end{equation}
 in three dimensional setting have been studied by K\"{a}hler, the author and Aksenov
by means of separation of variables in cylindrical coordinates
\cite{BrKaeh:2016,Aksenov:2017}.

 General class of $C^1$-solutions of the static Maxwell system $(\ref{Bryukhov-Maxwell-system-3})$
  is equivalently represented as general class of $C^1$-solutions of the system
$$
(A_3) \left\{
    \begin{array}{l}
      (x_1^2+x_2^2) \left(\frac{\partial{u_0}}{\partial{x_0}}-
      \frac{\partial{u_1}}{\partial{x_1}}-\frac{\partial{u_2}}{\partial{x_2}} \right) + (x_1u_1+x_2u_2)=0, \\
      \frac{\partial{u_0}}{\partial{x_1}}=-\frac{\partial{u_1}}{\partial{x_0}},
      \ \ \ \frac{\partial{u_0}}{\partial{x_2}}=-\frac{\partial{u_2}}{\partial{x_0}}, \\
      \frac{\partial{u_1}}{\partial{x_2}}=\ \ \frac{\partial{u_2}}{\partial{x_1}},
     \end{array}
  \right.
$$
 where $ \ \vec E = (u_0, -u_1, -u_2)$.
The system $(A_3)$ may be considered as an axial-hyperbolic non-Euclidean modification of the system $(R)$
with respect to the conformal metric defined outside the axis $x_0$ by formula (see, e.g., \cite{Ahlfors:1981,BrH:2011}):
$$
 ds^2  =\frac {d{x_0}^2 + d{x_1}^2 + d{x_2}^2}{\rho^2}.
$$

One of the main obstacles in applications of modified quaternionic analysis in $\mathbb R^3$
 is the problem of holistic interpretation of axially symmetric Fueter's construction  in $\mathbb R^3$
 (see, e.g., \cite{Fueter:1934,Leut:CV17,Leut:CV20}):
\begin{equation}
 F = F(x) = u_0 + i u_1 + j u_2 = u_0(x_0, \rho) + I \ u_{\rho}(x_0, \rho),
\label{Fueter}
\end{equation}
where
$$
 x = x_0 + I \rho, \ \ \ \  I = \frac{i x_1+ j x_2}{\rho}= i \cos{\theta} + j \sin{\theta}, \ \ \ \  I^2=-1,
$$
\begin{equation}
 u_1  = \frac{x_1}{\rho} u_{\rho} =  u_{\rho} \cos{\theta}, \ \ \ \
 u_2  = \frac{x_2}{\rho} u_{\rho} =  u_{\rho} \sin{\theta}.
\label{Fueter-I}
\end{equation}

 Various aspects of extensions of modified quaternionic analysis including Fueter's construction as a core element (see, e.g., \cite{LeZe:CMFT2004})
and their applications were  discussed by Leutwiler, Eriksson and
the author in Prague, November 2000 (the Workshop "Clifford Analysis
and Its Applications"). In 2003  the author characterized explicitly
class of the reduced quaternion-valued functions associated with
classical holomorphic within Fueter's construction in $\mathbb R^3$
$(\ref{Fueter})$ as joint class of analytic solutions of the system
$(H)$ and the system $(A_3)$ under the special condition (see, e.g.,
\cite{Leut:CV17,Br:2003,BrH:2011,BrKaeh:2016}):
\begin{equation}
 {u_1}{x_2}={u_2}{x_1}.
\label{spec.cond-3}
\end{equation}

\subsection{Notations}

The real algebra of quaternions $\mathbb H$ is a four dimensional skew algebra over the real field generated by real unity $1$.
Three imaginary unities $i, j,$ and $k$ satisfy to the following multiplication rules
$$
i^2 = j^2 = k^2  = ijk = -1, \quad ij = -ji = k.
$$
The independent quaternionic variable is defined as $$x = x_0 + ix_1  + jx_2  + kx_3.$$

Suppose that $ \rho = \sqrt {x_1^2+x_2^2+x_3^2}$ and  $ \rho > 0$.  We get  $x= x_0 + I \rho$,
 where  $ I = \frac{i x_1+ j x_2+ k x_3 }{\rho}$ and $ I^2=-1.$

The quaternion conjugation of $x$ is defined by the following automorphism:
$$ x \mapsto \overline{x} := x_0 - ix_1 - jx_2 - kx_3.$$

In such way, we deal with the Euclidean norm  in $\mathbb R^4$
$$\| x \|^2 :=  x \overline{x} = x_0^2 + x_1^2 + x_2^2 + x_3^2 := r^2,$$
and the identification
$$x = x_0 + ix_1  + jx_2  + kx_3 \sim (x_0,  x_1,  x_2,  x_3)$$
between $\mathbb H$ and $\mathbb R^4$ is valid. Moreover, for every non-zero value of $x$ an unique inverse value exists: $x^{-1} = \overline{x} / \| x \|^2.$

  The dependent quaternionic variable is defined as  $$u = u_0 + iu_1 + ju_2 +  ju_3 \sim (u_0, u_1, u_2, u_3).$$

The quaternion conjugation of $u$ is defined by the following automorphism:
$$ u \mapsto \overline{u} := u_0 - iu_1 - ju_2 - ku_3.$$

We have to deal with the space of reduced quaternions in case $x_3=0.$
Hereby, the  independent  reduced quaternionic variable $x= x_0 + ix_1 + jx_2$ may be identified with the vector $(x_0,x_1,x_2) \in \mathbb{R}^3$.

If $\rho > 0$, the polar angle $\varphi$ and the azimuthal angle $\theta$  are described as

$  \varphi=  \arccos \frac{x_0}{r}\ \ (0 < \varphi < \pi),
\ \ \ \theta = \arccos \frac{x_1}{\rho}\ \ (0 \leq \theta \leq 2\pi).$

 In cylindrical and spherical (sometimes called "polar") coordinates we get

 $ x = x_0 + \rho (i\cos{\theta} + j\sin{\theta})
 = r( \cos{\varphi} + i \sin{\varphi} \cos{\theta} + j \sin{\varphi} \sin{\theta} ).$

The polar angle $\varphi$ may be characterized as the argument of the reduced quaternionic variable $x$ in case $\rho > 0$:
$ \ \arg x := \varphi$ \cite{Leut:CV20}.

 \begin{defn}
Let $\Omega\subset \mathbb R^3$ be an open set.
Every continuously differentiable mapping  $u= u_0 + iu_1  + ju_2: \Omega \rightarrow \mathbb{R}^3$
is called the reduced quaternion-valued $C^1$-function $u = u(x)$ in $\Omega$.
 \end{defn}

\section{The Static Maxwell System in Special Planarly Layered Media and
$\alpha$-Hyperbolic Non-Euclidean Modification of the System $(R)$}

An original approach to building special classes of quaternion-valued solutions of the static Maxwell system
$(\ref{isotropic-electrostatic-Maxwell-system-3})$ in different layered media, where $\phi= {\phi}_0(x_0){\phi}_1(x_1){\phi}_2(x_2)$,
was developed by Kravchenko et al. in 2003 (see, e.g., \cite{KravKrav:2003,KravTach:2003}) using a quaternionic reformulation of the Dirac equation.
 A special class of quaternion-valued solutions of the
system $(\ref{isotropic-electrostatic-Maxwell-system-3})$, where ${\phi}_0(x_0) = x_0^{2p}$ $(p>0)$, ${\phi}_1(x_1)= x_1^{2m}$
$(m>0)$, ${\phi}_2(x_2) = x_2^{2n}$ $(n>0)$, was obtained by Dinh in 2021 \cite{Dinh:2021} by means of Kravchenko-generalized Dirac operators.

  General class of $C^1$-solutions of the static Maxwell system  in planarly layered media, where $\phi= {\phi}_2(x_2)>0$,
\begin{equation}
  \left\{
\begin{array}{l}
  \mathrm{div} \, ( {\phi}_2(x_2) \vec E ) = 0, \\
  \mathrm{curl}{\ \vec E} = 0
 \end{array}
\right. \label{GHR-isotropic-electrostatic-Maxwell-system-3}
\end{equation}
   is equivalently represented as general class of $C^1$-solutions of the system
 \begin{equation}
 \left\{
     \begin{array}{l}
       {\phi}_2 \left(\frac{\partial{u_0}}{\partial{x_0}}-
      \frac{\partial{u_1}}{\partial{x_1}}-
      \frac{\partial{u_2}}{\partial{x_2}}\right)- \frac{d{{\phi}_2}}{d{x_2}}u_2 = 0,  \\
      \frac{\partial{u_0}}{\partial{x_1}}=-\frac{\partial{u_1}}{\partial{x_0}}, \ \ \ \ \
      \frac{\partial{u_0}}{\partial{x_2}}=-\frac{\partial{u_2}}{\partial{x_0}}, \\
      \frac{\partial{u_1}}{\partial{x_2}}=\frac{\partial{u_2}}{\partial{x_1}},
     \end{array}
  \right.
\label{Bryukhov-hyperbolic-3}
\end{equation}
where $(u_0, u_1, u_2)=(E_0, -E_1, -E_2)$.

The continuity equation $(\ref{Lap-Bel-eq-3-general})$ is written as
 \begin{equation}
{\phi}_2 \left( \frac{{\partial}^2{h}}{{\partial{x_0}}^2} +
\frac{{\partial}^2{h}}{{\partial{x_1}}^2} +
\frac{{\partial}^2{h}}{{\partial{x_2}}^2} \right)
 +  \frac{d{{\phi}_2}}{d{x_2}} \frac{\partial{h}}{\partial{x_2}} =0.
  \label{Lap-Bel-eq-3-hyperbolic}
  \end{equation}
 Important properties of electrostatic fields may be investigated in more detail
 in case ${\phi}_2(x_2) = x_2^{-\alpha}$ $(x_2>0$, $\alpha \in \mathbb{R})$.
 We deal with the Weinstein equation in $\mathbb R^3$ (see, e.g., \cite{Weinstein:1953,Brelot-Collin:1973,AkinLeut:1994,ErOrel:2014,DinhTuyet:2020}):
\begin{equation}
 x_2 \Delta{h} - \alpha \frac{\partial{h}}{\partial{x_2}} =0.
\label{alpha-hyperbolic-3}
\end{equation}

 The static Maxwell system $(\ref{GHR-isotropic-electrostatic-Maxwell-system-3})$  is expressed as
 \begin{equation}
  \left\{
\begin{array}{l}
  \mathrm{div} \, ( x_2^{-\alpha} \vec E ) = 0, \\
  \mathrm{curl}{\ \vec E} = 0,
 \end{array}
\right.
 \label{alpha-plane-layered-electrostatic-Maxwell}
\end{equation}
and the system $(\ref{Bryukhov-hyperbolic-3})$  is simplified:
\begin{equation}
\left\{
    \begin{array}{l}
      x_2 \left(\frac{\partial{u_0}}{\partial{x_0}}-
      \frac{\partial{u_1}}{\partial{x_1}}-\frac{\partial{u_2}}{\partial{x_2}}\right)+ \alpha u_2 = 0, \\
      \frac{\partial{u_0}}{\partial{x_1}}=-\frac{\partial{u_1}}{\partial{x_0}},
      \ \ \ \frac{\partial{u_0}}{\partial{x_2}}=-\frac{\partial{u_2}}{\partial{x_0}}, \\
      \frac{\partial{u_1}}{\partial{x_2}}=\ \ \frac{\partial{u_2}}{\partial{x_1}}.
     \end{array}
  \right.
\label{eq:H_3^alpha-system}
\end{equation}
Assume that $\alpha>0$. This system may be considered as $\alpha$-hyperbolic non-Euclidean modification of the system $(R)$
 with respect to the conformal metric defined on the halfspace $\{x_2 > 0\}$ by formula:
$$
ds^2 = \frac{d{x_0}^2 + d{x_1}^2 + d{x_2}^2}{x_2^{2\alpha}}.
$$

Some new properties of exact solutions of the Weinstein equation in $\mathbb R^3$ and the system $(\ref{eq:H_3^alpha-system})$  have been studied
 in the context of hyperbolic function theory in $\mathbb R^3$ (see, e.g., \cite{ErLeut:2007,ErOrel:2009,ErOrel:2011}).

 \begin{defn}
Let $\Lambda \subset \mathbb R^3$ $ (x_2 > 0)$ be a simply connected open domain, $\alpha> 0$.
 Every exact solution of the Eq. $(\ref{alpha-hyperbolic-3})$ in $\Lambda$ is called $\alpha$-hyperbolic harmonic potential in $\Lambda$.
 \end{defn}

Nowadays solutions of the Eq. $(\ref{alpha-hyperbolic-3})$ in case
$\alpha < 0$
in the context of the theory of modified harmonic functions in
$\mathbb R^3$ (see, e.g.,
\cite{Leut:2017-CAOT,Leut:2019-AACA,Leut:2021-MMAS}) are referred to
as $-\alpha$-modified harmonic functions in $\mathbb R^3$. New
orthonormal system of polynomial modified harmonic functions on the
unit half sphere
 $S_{+}^2 = \{ (x_0, x_1, x_2): x_0^2 + x_1^2 + x_2^2 =1$, $ \ x_2 > 0 \} $ in case $\alpha = -1$ was obtained by Leutwiler in 2017
\cite{Leut:2017-CAOT} using separation of variables in spherical coordinates under condition of $\frac{\partial{h}}{\partial{\theta}} = 0$.

Meanwhile, independently specific properties of $\alpha$-hyperbolic
harmonic electrostatic potentials in three dimensional setting may
be explicitly demonstrated by means of separation of variables in
Cartesian coordinates  (see, e.g., \cite{Feshbach,Wait:1982}).

 Let us first look for a class of exact solutions of the equation $(\ref{alpha-hyperbolic-3})$
  under the first condition of separation of variables $h(x_0, x_1, x_2) =$ $g(x_0,  x_2) s(x_1)$:
$$
 s  x_2 \left( \frac{\partial{^2}{g}}{\partial{x_0}^2} +
         \frac{\partial {^2}{g}}{\partial{ x_2}^2} \right) -
\alpha s \frac{\partial{g}}{\partial{ x_2}}  +
        g x_2 \frac{d{^2}{s}}{d{x_1}^2}
 = 0.
$$

Relations
\begin{equation}
  - g \frac{d{^2}{s}}{d{x_1}^2} =
  s \left( \frac{\partial{^2}{g}}{\partial{x_0}^2} +
         \frac{\partial {^2}{g}}{\partial{ x_2}^2} \right)
 - \frac{\alpha s}{x_2} \frac{\partial{g}}{\partial{ x_2}} =
     \breve{\lambda}^2 gs  \ \ \ \ \  ( \breve{\lambda} = const \in \mathbb{R})
\label{Laplace-Beltrami-equation-3-sep}
  \end{equation}
lead to the following system of equations:
\begin{equation}
\left\{
      \begin{array}{l}
   \frac{d{^2}{s}}{d{x_1}^2} + \breve{\lambda}^2  s = 0, \\
   \frac{\partial{^2}{g}}{\partial{x_0}^2} +   \frac{\partial {^2}{g}}{\partial{x_2}^2}
 - \frac{\alpha}{x_2} \frac{\partial{g}}{\partial{x_2}}
  - \breve{\lambda}^2 g = 0.
     \end{array}
  \right.
  \label{Laplace-Beltrami equation, separation-3}
  \end{equation}

The first equation of the system $(\ref{Laplace-Beltrami equation, separation-3})$ may be solved using trigonometric functions: \\
$ s_{\breve{\lambda}} (x_1) = C_{1, \breve{\lambda}} \cos{
\breve{\lambda} x_1} + C_{2, \breve{\lambda}} \sin{ \breve{\lambda}
x_1}$, where $\breve{\lambda}\in \mathbb{Z}$; $ \ C_{1,
\breve{\lambda}}, C_{2, \breve{\lambda}}= const \in \mathbb{R}$.

Let us look for a class of exact solutions of the second equation of the system $(\ref{Laplace-Beltrami equation, separation-3})$
 under the second condition of separation of variables $g(x_0,  x_2) = \Xi(x_0)  \Upsilon(x_2)$:
$$
\Upsilon \frac{d{^2}{\Xi}}{d{x_0}^2}
+ \Xi \frac{d{^2}{ \Upsilon}}{d{x_2}^2} - \frac{\alpha \Xi}{x_2} \frac{d{ \Upsilon}}{d{x_2}} - \breve{\lambda}^2 \Xi \Upsilon = 0.
$$

 Relations
\begin{equation}
 -  \Upsilon \frac{d{^2}{\Xi}}{d{x_0}^2}  =
   \Xi \frac{d{^2}{ \Upsilon}}{d{x_2}^2}
 - \frac{\alpha \Xi}{x_2} \frac{d{ \Upsilon}}{d{x_2}}  - \breve{\lambda}^2 \Xi \Upsilon =
      - \breve{\beta}^2 \Xi \Upsilon  \ \ \ \ \  ( \breve{\beta}  = const \in \mathbb{R})
\label{equation-mu-sep-hyper}
  \end{equation}
are equivalent to the following system of ordinary differential equations:
\begin{equation}
\left\{
      \begin{array}{l}
 \frac{d{^2}{\Xi}}{d{x_0}^2} - \breve{\beta}^2  \Xi = 0, \\
   x_2^2  \frac{d{^2}{\Upsilon}}{d{x_2}^2}
 - \alpha x_2 \frac{d{\Upsilon}}{d{x_2}}
  +  ( \breve{\beta}^2 - \breve{\lambda}^2) x_2^2 \Upsilon = 0.
     \end{array}
  \right.
  \label{eq-sep-x_2-x_0-hyper}
  \end{equation}

The  first equation of the system (\ref{eq-sep-x_2-x_0-hyper}) may be solved using hyperbolic functions:  \\
$  \Xi_{\breve{\beta}}(x_0) = B_{1, \breve{\beta}} \cosh{\breve{\beta} x_0} + B_{2, \breve{\beta}} \sinh{\breve{\beta} x_0}$;
 $\ B_{1, \breve{\beta}}, B_{2, \breve{\beta}}= const \in \mathbb{R}$.

If $B_{1, \breve{\beta}} =1$ and $B_{2, \breve{\beta}}=1$, then $ \ \Xi_{\breve{\beta}}(x_0) = e^{\breve{\beta} x_0}$.

Assume that $ \breve{\lambda}^2 < \breve{\beta}^2$.
 The second equation of the system (\ref{eq-sep-x_2-x_0-hyper})
may be solved using linear independent solutions: \\
$ \Upsilon_{ \breve{\lambda}, \breve{\beta}}(x_2)= {x_2}^\frac{\alpha+1}{2} \left[ A_{1, \breve{\lambda}, \breve{\beta}} J_{\frac{\alpha+1}{2}}\left( x_2 \sqrt{\breve{\beta}^2 - \breve{\lambda}^2} \right)
+ A_{2, \breve{\lambda}, \breve{\beta}} Y_{\frac{\alpha+1}{2}}\left( x_2 \sqrt{\breve{\beta}^2 - \breve{\lambda}^2} \right) \right],$ \\
where $J_{ \breve{\nu}}(\breve{\xi})$ and $Y_{
\breve{\nu}}(\breve{\xi})$ are Bessel functions of the first and
second kind of real order $ \breve{\nu}= {\frac{\alpha + 1}{2}}$ and
real argument $\breve{\xi} = x_2 \sqrt{\breve{\beta}^2 -
\breve{\lambda}^2}$
 (see, e.g., \cite{Watson:1944,Kuzmin,Koren:2002,PolZait:Ordin-2017});
  $ A_{1, \breve{\lambda}\breve{\beta}}$, $A_{2, \breve{\lambda}, \breve{\beta}} = const \in \mathbb{R}$.

Assume that $ \breve{\lambda}^2 > \breve{\beta}^2$.
 The second equation of the system $(\ref{eq-sep-x_2-x_0-hyper})$
may be solved using linear independent solutions: \\
$ \Upsilon_{ \breve{\lambda}, \breve{\beta}}(x_2)=
{x_2}^\frac{\alpha+1}{2} \left[ A_{1, \breve{\lambda},
\breve{\beta}} J_{\frac{\alpha+1}{2}}\left(i x_2 \sqrt{
\breve{\lambda}^2 - \breve{\beta}^2} \right) + A_{2,
\breve{\lambda}, \breve{\beta}} Y_{\frac{\alpha+1}{2}}\left(i x_2
\sqrt{ \breve{\lambda}^2 - \breve{\beta}^2} \right) \right]$, where
$J_{ \breve{\nu}}(\breve{\xi})$ and $Y_{ \breve{\nu}}(\breve{\xi})$
are Bessel functions of the first and second kind of real order $
\breve{\nu}= {\frac{\alpha + 1}{2}}$
 and purely imaginary argument $\breve{\xi} = i x_2 \sqrt{ \breve{\lambda}^2 - \breve{\beta}^2}$.

This implies the following formulation.

 \begin{thm}
 A special class of exact solutions of the Weinstein equation $(\ref{alpha-hyperbolic-3})$
satisfying the relations  $(\ref{Laplace-Beltrami-equation-3-sep})$,
$(\ref{equation-mu-sep-hyper})$, $\breve{\beta}\notin \mathbb{Z}$
 in three dimensional setting may be obtained using Bessel functions of the first and second kind:
\begin{equation}
 h_{\breve{\beta}}(x_0, x_1, x_2) = \sum_{\breve{\lambda}= -\infty}^\infty \left(C_{1,\breve{\lambda}}\cos(\breve{\lambda}x_1)+C_{2,\breve{\lambda}}\sin(\breve{\lambda}x_1)\right)  g_{\breve{\lambda}, \breve{\beta}}(x_0,  x_2),
 \label{eq-Weinstein-Cart}
  \end{equation}
 \it{where}
$$
 g_{\breve{\lambda}, \breve{\beta}}(x_0, x_2) = \left(B_{1, \breve{\beta}} \cosh (\breve{\beta} x_0) + B_{2, \breve{\beta}} \sinh (\breve{\beta} x_0) \right) \Upsilon_{\breve{\lambda}, \breve{\beta}}(x_2);
$$
in case
$ \breve{\lambda}^2 < \breve{\beta}^2$
$$
 \Upsilon_{\breve{\lambda}, \breve{\beta}}(x_2) = {x_2}^\frac{\alpha+1}{2} \left[ A_{1, \breve{\lambda}, \breve{\beta}} J_{\frac{\alpha+1}{2}}\left( x_2 \sqrt{\breve{\beta}^2 - \breve{\lambda}^2} \right)
+ A_{2, \breve{\lambda}, \breve{\beta}} Y_{\frac{\alpha+1}{2}}\left( x_2 \sqrt{\breve{\beta}^2 - \breve{\lambda}^2} \right) \right]
$$
and in case
$ \breve{\lambda}^2 > \breve{\beta}^2$
$$
 \Upsilon_{\breve{\lambda}, \breve{\beta}}(x_2) = {x_2}^\frac{\alpha+1}{2} \left[ A_{1, \breve{\lambda}, \breve{\beta}} J_{\frac{\alpha+1}{2}}\left( i x_2 \sqrt{ \breve{\lambda}^2 - \breve{\beta}^2} \right)
+ A_{2, \breve{\lambda}, \breve{\beta}} Y_{\frac{\alpha+1}{2}}\left( i x_2 \sqrt{ \breve{\lambda}^2 - \breve{\beta}^2} \right) \right].
$$
 \end{thm}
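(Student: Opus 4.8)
The plan is to verify that the announced series is indeed a family of solutions of the Weinstein equation $(\ref{alpha-hyperbolic-3})$ by reversing the separation-of-variables procedure that was carried out just before the statement. First I would observe that, by linearity of the operator $L[h] := x_2\Delta h - \alpha\,\partial h/\partial x_2$, it suffices to check that each summand
$$
\bigl(C_{1,\breve{\lambda}}\cos(\breve{\lambda}x_1)+C_{2,\breve{\lambda}}\sin(\breve{\lambda}x_1)\bigr)\,g_{\breve{\lambda},\breve{\beta}}(x_0,x_2)
$$
is annihilated by $L$, provided the series and its relevant partial derivatives converge — which is why the hypothesis $\breve{\beta}\notin\mathbf{Z}$ and the order $\breve{\nu}=\frac{\alpha+1}{2}$ of the Bessel functions are recorded, so that termwise differentiation is legitimate on the open domain $\Lambda\subset\{x_2>0\}$ away from the zeros of the Bessel functions. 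The uniform convergence on compact subsets follows from the standard asymptotics of $J_{\breve{\nu}}$ and $Y_{\breve{\nu}}$; I would cite \cite{Watson:1944} rather than reproduce the estimates.

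Next I would carry out the reduction itself. Substituting $h=g(x_0,x_2)\,s(x_1)$ and imposing $(\ref{Laplace-Beltrami-equation-3-sep})$ produces the split system $(\ref{Laplace-Beltrami equation, separation-3})$; the first equation there is the harmonic-oscillator equation with solution $s_{\breve{\lambda}}(x_1)=C_{1,\breve{\lambda}}\cos(\breve{\lambda}x_1)+C_{2,\breve{\lambda}}\sin(\breve{\lambda}x_1)$. Then substituting $g=\Xi(x_0)\Upsilon(x_2)$ and imposing $(\ref{equation-mu-sep-hyper})$ yields $(\ref{eq-sep-x_2-x_0-hyper})$: the first ODE gives the hyperbolic factor $\Xi_{\breve{\beta}}(x_0)=B_{1,\breve{\beta}}\cosh(\breve{\beta}x_0)+B_{2,\breve{\beta}}\sinh(\breve{\beta}x_0)$, while the second is
$$
x_2^2\,\Upsilon'' - \alpha x_2\,\Upsilon' + (\breve{\beta}^2-\breve{\lambda}^2)x_2^2\,\Upsilon = 0.
$$
I would then note that the change of unknown $\Upsilon(x_2)=x_2^{(\alpha+1)/2}\,W(x_2)$ together with the rescaling $\breve{\xi}=x_2\sqrt{\breve{\beta}^2-\breve{\lambda}^2}$ transforms this into Bessel's equation of order $\breve{\nu}=\frac{\alpha+1}{2}$ for $W(\breve{\xi})$; this is the one computational step worth displaying, since it is exactly where the index $\frac{\alpha+1}{2}$ and the two regimes $\breve{\lambda}^2\lessgtr\breve{\beta}^2$ (real versus purely imaginary argument) emerge. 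The general solution is then $W=A_1 J_{\breve{\nu}}(\breve{\xi})+A_2 Y_{\breve{\nu}}(\breve{\xi})$, giving the stated $\Upsilon_{\breve{\lambda},\breve{\beta}}$ in each case.

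Finally I would assemble the pieces: each product $s_{\breve{\lambda}}(x_1)\,\Xi_{\breve{\beta}}(x_0)\,\Upsilon_{\breve{\lambda},\breve{\beta}}(x_2)$ satisfies $L[\cdot]=0$ by construction of the separation constants, and summing over $\breve{\lambda}\in\mathbf{Z}$ — now interpreting $\breve{\lambda}$ as an integer so that $s_{\breve{\lambda}}$ is $2\pi$-periodic in $x_1$, consistent with the remark preceding the theorem — yields $h_{\breve{\beta}}$ in $(\ref{eq-Weinstein-Cart})$. I do not expect a genuine obstacle here: the argument is a bookkeeping exercise in reversing separation of variables. The only point requiring a little care is the justification of termwise differentiation of the infinite series, and more pedantically the observation that in the regime $\breve{\lambda}^2>\breve{\beta}^2$ the combination $x_2^{(\alpha+1)/2}[A_1 J_{\breve{\nu}}(ix_2\kappa)+A_2 Y_{\breve{\nu}}(ix_2\kappa)]$ with $\kappa=\sqrt{\breve{\lambda}^2-\breve{\beta}^2}$ can be taken real (it is, up to constants, a modified Bessel combination $c_1 I_{\breve{\nu}}(x_2\kappa)+c_2 K_{\breve{\nu}}(x_2\kappa)$), so that the real-valuedness of the electrostatic potential $h$ is preserved; I would mention this in a sentence and refer again to \cite{Watson:1944}.
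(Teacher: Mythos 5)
Your proposal is correct and follows essentially the same route as the paper, which obtains the theorem directly from the separation-of-variables computation displayed immediately before it (substituting $h=g(x_0,x_2)s(x_1)$, then $g=\Xi(x_0)\Upsilon(x_2)$, and reducing the radial ODE to Bessel's equation of order $\frac{\alpha+1}{2}$ in the two regimes $\breve{\lambda}^2\lessgtr\breve{\beta}^2$). The additional remarks you make on termwise differentiation of the series and on the real-valuedness of the modified Bessel combination are sensible refinements but do not change the argument, which the paper leaves at the level of ``this implies the formulation.''
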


Assume that $ \breve{\lambda}^2 = \breve{\beta}^2$. The second equation of the system $(\ref{eq-sep-x_2-x_0-hyper})$
leads to the Euler equation:
\begin{equation}
  x_2^2 \frac{d{^2}{ \Upsilon}}{d{ x_2}^2}
 - \alpha  x_2 \frac{d{ \Upsilon}}{d{ x_2}} = 0.
 \label{eq-Euler-Cart}
  \end{equation}
The Eq. $(\ref{eq-Euler-Cart})$ may be solved using power functions (see, e.g., \cite{PolZait:Ordin-2017}): \\
 $ \Upsilon(x_2) = A_1 x_2^{\alpha +1} + A_2$;  $\ A_1, A_2= const \in \mathbb{R}$.

 A class of electrostatic fields satisfying the relations $(\ref{Laplace-Beltrami-equation-3-sep})$,
where \\ $s(x_1) = 1$,  $h(x_0, x_1, x_2) = g(x_0, x_2)$, implies that
 the vector $\vec E$ is independent of the variable $x_1$ and $E_1= \frac{\partial{h}}{\partial{x_1}} = 0$.
The parameter $\breve{\lambda}$  vanishes,  and the second equation  of the system  $(\ref{Laplace-Beltrami equation, separation-3})$
 leads to the elliptic Euler-Poisson-Darboux equation in Cartesian coordinates  (see, e.g., \cite{Krivenkov:1957-3}):
 \begin{equation}
 x_2 \left( \frac{\partial{^2}{g}}{\partial{x_0}^2} +  \frac{\partial {^2}{g}}{\partial{x_2}^2} \right)
  - \alpha \frac{\partial{g}}{\partial{x_2}} = 0.
  \label{alpha-hyperbolic-Laplace-Beltrami-2}
  \end{equation}

 Properties of critical points of exact  solutions of the Eq. $(\ref{alpha-hyperbolic-Laplace-Beltrami-2})$ in case $\alpha = - 1$
 were investigated by Konopelchenko and Ortenzi in 2013 in the context of numerous problems of mathematical physics and catastrophe theory
  (see, e.g., \cite{Konopel,Shvartsburg,Gilmore:1993,PosStew}).

 In accordance with the Eq.
 $(\ref{alpha-hyperbolic-Laplace-Beltrami-2})$,
the system $(\ref{eq:H_3^alpha-system})$  leads to a family of Vekua
type systems investigated by Eriksson, Orelma and Sommen in the
context of hyperbolic function theory in the plane and hyperbolic
harmonic analysis \cite{ErOrel:2014-Vekua,ErOrelSommen:2016}:
\begin{equation}
\left\{
      \begin{array}{l}
   x_0  \left(\frac{\partial{u_0}}{\partial{x_0}} - \frac{\partial{u_2}}{\partial{x_2}}\right) + \alpha u_2 = 0,  \\
      \frac{\partial{u_0}}{\partial{x_2}}=
      -\frac{\partial{u_2}}{\partial{x_0}}.
     \end{array}
  \right.
\label{alpha-Vekua-type-Cart}
\end{equation}

 General class of $C^1$-solutions of Vekua
type systems $(\ref{alpha-Vekua-type-Cart})$ is equivalently
represented as general class of $C^1$-solutions of the static
Maxwell system $(\ref{alpha-plane-layered-electrostatic-Maxwell})$
in the plane $(x_0, x_2)$:
$$
\left\{
      \begin{array}{l}
        x_0  \left(\frac{\partial{E_0}}{\partial{x_0}} + \frac{\partial{E_2}}{\partial{x_2}}\right) - \alpha E_2 = 0,  \\
      \frac{\partial{E_0}}{\partial{x_2}} =
      \frac{\partial{E_2}}{\partial{x_0}},
     \end{array}
  \right.
$$
where
$$
  E_0 = \frac{\partial{g}}{\partial{x_0}}, \ \ \ \ \ \ \ \
  E_2 = \frac{\partial{g}}{\partial{x_2}}.
$$

\section{The Static Maxwell System in Special Cylindrically Layered Media and
$\alpha$-Axial-Hyperbolic Non-Euclidean Modification of the System $(R)$}

Two important classes of meridional and transverse electrostatic fields in cylindrically layered media, where   $\phi= \phi( \rho)>0$:
\begin{equation}
  \left\{
\begin{array}{l}
  \mathrm{div} \, ( \phi ( \rho) \vec E ) = 0, \\
  \mathrm{curl}{\ \vec E} = 0,
 \end{array}
\right.
\label{axial-isotropic-electrostatic-Maxwell-system-3}
\end{equation}
in cylindrical and Cartesian coordinates   were investigated by
Khmelnytskaya, Kravchenko and Oviedo in 2010 by means of applied
pseudoanalytic function theory \cite{KhmKravOv:2010,Krav:2009}. In
case of meridional fields the vector $\vec E$ is independent of the
azimuthal angle $\theta$, herewith $E_{\theta} =
\frac{\partial{h}}{\partial{\theta}} = 0$. In case of transverse
fields the vector $\vec E$ is independent of the longitudinal
variable $x_0$, herewith $E_0= \frac{\partial{h}}{\partial{x_0}} =
0$.

As seen from the system $(\ref{Bryukhov-Maxwell-system-3})$, axially
symmetric extensions of the system $(A_3)$ lead to investigation of
electrostatic fields in cylindrically layered media.

Meanwhile, general class of $C^1$-solutions of the system  $(\ref{axial-isotropic-electrostatic-Maxwell-system-3})$
   is equivalently represented as general class of $C^1$-solutions of the system
 \begin{equation}
  \left\{
     \begin{array}{l}
       \phi(\rho) \left(\frac{\partial{u_0}}{\partial{x_0}}-
      \frac{\partial{u_1}}{\partial{x_1}}-
      \frac{\partial{u_2}}{\partial{x_2}}\right)- \left( \frac{\partial{ \phi(\rho) }}{\partial{x_1}}u_1 + \frac{\partial{ \phi(\rho) }}{\partial{x_2}}u_2\right)=0,  \\
      \frac{\partial{u_0}}{\partial{x_1}}=-\frac{\partial{u_1}}{\partial{x_0}}, \ \ \ \ \
      \frac{\partial{u_0}}{\partial{x_2}}=-\frac{\partial{u_2}}{\partial{x_0}}, \\
      \frac{\partial{u_1}}{\partial{x_2}}=\frac{\partial{u_2}}{\partial{x_1}},
     \end{array}
  \right.
\label{Bryukhov-axial-3}
\end{equation}
where  $ \vec E = (u_0, -u_1, -u_2)$.

 The equation  $(\ref{Lap-Bel-eq-3-general})$ in cylindrically layered media is written as
$$
\phi \left( \frac{{\partial}^2{h}}{{\partial{x_0}}^2} + \frac{{\partial}^2{h}}{{\partial{x_1}}^2} + \frac{{\partial}^2{h}}{{\partial{x_2}}^2} \right)
 +  \frac{d{\phi}}{d{\rho}} \left( \frac{\partial{h}}{\partial{x_1}} \cos{\theta} + \frac{\partial{h}}{\partial{x_2}} \sin{\theta} \right) =0.
$$

 Suppose that $\phi ( \rho) =  \rho^{-\alpha}$ $ (\rho > 0, \ \alpha \in \mathbb{R})$.
We deal with the following axially symmetric elliptic equation in $\mathbb R^3$:
 \begin{equation}
(x_1^2+x_2^2)\Delta{h} - \alpha \left(  x_1\frac{\partial{h}}{\partial{x_1}} + x_2\frac{\partial{h}}{\partial{x_2}}\right)  =0.
  \label{axially-Kravchenko-3-alpha}
  \end{equation}

 \begin{rem}
 The invariance of solutions of the Eq. $(\ref{axially-Kravchenko-3-alpha})$ under M\"{o}bius transformations
in comparison with solutions of the Weinstein equation in $\mathbb
R^3$ $(\ref{alpha-hyperbolic-3})$ raises important issues for
consideration \cite{AkinLeut:1994}.
 \end{rem}

 The static Maxwell system $(\ref{axial-isotropic-electrostatic-Maxwell-system-3})$ is expressed as
\begin{equation}
  \left\{
\begin{array}{l}
  \mathrm{div} \, ( \rho^{-\alpha} \vec E ) = 0, \\
  \mathrm{curl}{\ \vec E} = 0,
 \end{array}
\right.
 \label{alpha-axial-isotropic-electrostatic-Maxwell-system-3}
\end{equation}
 and the system $(\ref{Bryukhov-axial-3})$  is simplified:
\begin{equation}
 \left\{
    \begin{array}{l}
      (x_1^2+x_2^2) \left(\frac{\partial{u_0}}{\partial{x_0}}-
      \frac{\partial{u_1}}{\partial{x_1}}-\frac{\partial{u_2}}{\partial{x_2}}\right) + \alpha(x_1u_1+x_2u_2) = 0, \\
      \frac{\partial{u_0}}{\partial{x_1}}=-\frac{\partial{u_1}}{\partial{x_0}},
      \ \ \ \frac{\partial{u_0}}{\partial{x_2}}=-\frac{\partial{u_2}}{\partial{x_0}}, \\
      \frac{\partial{u_1}}{\partial{x_2}}=\ \ \frac{\partial{u_2}}{\partial{x_1}}.
     \end{array}
  \right.
\label{eq:A_3^alpha-system}
\end{equation}
Assume that $\alpha>0$.
This system may be considered as $\alpha$-axial-hyperbolic non-Euclidean
 modification of the system $(R)$ with respect to the conformal metric defined outside the axis $x_0$ by formula:
$$
ds^2 = \frac{d{x_0}^2 + d{x_1}^2 + d{x_2}^2}{\rho^{2\alpha}}.
$$

 \begin{defn}
Let $\Lambda \subset \mathbb R^3$ $ (\rho > 0)$ be a simply connected
open domain, $\alpha> 0$.
 Every exact solution of the Eq. $(\ref{axially-Kravchenko-3-alpha})$ in $\Lambda$
 is called $\alpha$-axial-hyperbolic harmonic potential in $\Lambda$.
 \end{defn}

 \begin{rem}
 The system $(\ref{eq:A_3^alpha-system})$ in the context of contemporary function theories in higher dimensions
and applications in mathematical physics (see, e.g.,
\cite{GuHaSp:2008,GuHaSp:2016}) may be interpreted as a family of
axially symmetric generalizations of the Cauchy-Riemann system in
$\mathbb R^3$ for different values of the parameter $\alpha$.
 \end{rem}

 \begin{prop}
[The first criterion]
  Every $\alpha$-hyperbolic harmonic potential $h= h(x_0, x_1, x_2)$ in  $\Lambda \subset \mathbb R^3$ $(x_2 > 0)$
 represents an $\alpha$-axial-hyperbolic harmonic potential in $\Lambda$ if and only if
\begin{equation}
 x_2 \frac{\partial{h}}{\partial{x_1}} = x_1 \frac{\partial{h}}{\partial{x_2}}.
\label{meridional-condition}
\end{equation}
 \end{prop}

 As seen, necessary and sufficient condition $(\ref{meridional-condition})$ of joint class of $\alpha$-hyperbolic harmonic and $\alpha$-axial-hyperbolic harmonic potentials coincides
with the special condition $(\ref{spec.cond-3})$ of joint class of analytic solutions of the system $(H)$ and the system  $(A_3)$.

 Some specific properties of $\alpha$-axial-hyperbolic harmonic electrostatic potentials in three dimensional setting may be explicitly demonstrated
by means of separation of variables  in cylindrical coordinates (see, e.g., \cite{Feshbach,BrKaeh:2016,Aksenov:2005}).

 The Eq. $(\ref{axially-Kravchenko-3-alpha})$ in cylindrical coordinates  may be written as
 \begin{equation}
 \rho^2 \left( \frac{\partial{^2}{h}}{\partial{x_0}^2} +  \frac{\partial {^2}{h}}{\partial{\rho}^2} \right)
  - (\alpha -1) \rho \frac{\partial{h}}{\partial{\rho}}
 + \frac{\partial {^2}{h}}{\partial{\theta}^2} = 0.
  \label{alpha-axial-hyperbolic-3-cyl}
  \end{equation}

 Let us first look for a class of exact solutions of the Eq. $(\ref{alpha-axial-hyperbolic-3-cyl})$
 under the first condition of separation of variables $h(x_0, \theta, \rho) =$ $g(x_0, \rho) s( \theta)$:
$$
 s( \theta)  \rho^2 \left( \frac{\partial{^2}{g}}{\partial{x_0}^2} +
         \frac{\partial {^2}{g}}{\partial{\rho}^2} \right) -
 s( \theta)(\alpha -1) \rho \frac{\partial{g}}{\partial{\rho}}  +
        g \frac{\partial {^2}{s}}{\partial{\theta}^2}
 = 0.
$$

Relations
\begin{equation}
    -   \frac{1}{ s } \frac{\partial {^2}{s}}{\partial{\theta}^2} =
  \frac{ \rho^2 }{ g } \left( \frac{\partial{^2}{g}}{\partial{x_0}^2} +
         \frac{\partial {^2}{g}}{\partial{\rho}^2} \right)
 - \frac{(\alpha -1)\rho}{g} \frac{\partial{g}}{\partial{\rho}} =
   \breve{\lambda}^2  \ \ \ \ \  ( \breve{\lambda}  = const \in  \mathbb R )
\label{Laplace-Beltrami-equation-3-cyl-sep}
  \end{equation}
lead to the following system of equations:
\begin{equation}
\left\{
      \begin{array}{l}
   \frac{d{^2}{s}}{d{\theta}^2} + \breve{\lambda}^2  s = 0, \\
   \frac{\partial{^2}{g}}{\partial{x_0}^2} +   \frac{\partial {^2}{g}}{\partial{\rho}^2}
 - \frac{(\alpha -1)}{\rho} \frac{\partial{g}}{\partial{\rho}}
  -   \frac{ \breve{\lambda}^2 }{ \rho^2 } g = 0.
     \end{array}
  \right.
  \label{system cylindrical alpha, azimuthal separation}
  \end{equation}

The first equation of the system $(\ref{system cylindrical alpha, azimuthal separation})$  may be solved using trigonometric functions: \\
$ s_{\breve{\lambda}}(\theta) = C_{1, \breve{\lambda}} \cos{
\breve{\lambda} \theta} + C_{2, \breve{\lambda}} \sin{
\breve{\lambda} \theta}$, where $\breve{\lambda}\in \mathbb{Z}$; $ \
C_{1, \breve{\lambda}}, C_{2, \breve{\lambda}}= const \in \mathbb
R$.

Let us look for a class of exact solutions of the second equation of the system $(\ref{system cylindrical alpha, azimuthal separation})$
under the second condition of separation of variables $g(x_0,  \rho) = \Xi(x_0)  \Upsilon(\rho)$:
$$
 \frac{1}{\Xi}  \frac{d{^2}{\Xi}}{d{x_0}^2} +   \frac{1}{ \Upsilon} \frac{d{^2}{ \Upsilon}}{d{\rho}^2}
 - \frac{(\alpha -1)} { \Upsilon \rho} \frac{d{ \Upsilon}}{d{\rho}}
  - \frac{ \breve{\lambda}^2}{\rho^2} = 0.
$$

I. On the one hand, relations
\begin{equation}
  -   \frac{1}{\Xi} \frac{d{^2}{\Xi}}{d{x_0}^2} =
    \frac{1}{ \Upsilon} \frac{d{^2}{ \Upsilon}}{d{\rho}^2}
 - \frac{(\alpha -1)} { \Upsilon \rho} \frac{d{ \Upsilon}}{d{\rho}}
  - \frac{ \breve{\lambda}^2}{\rho^2} =
     - \breve{\beta}^2  \ \ \ \ \  ( \breve{\beta}  = const \in \mathbb R )
\label{equation-beta-sep-hyper-cyl}
  \end{equation}
are equivalent to the following system of ordinary differential equations:
\begin{equation}
\left\{
      \begin{array}{l}
    \frac{d{^2}{\Xi}}{d{x_0}^2} - \breve{\beta}^2  \Xi = 0, \\
 \rho^2 \frac{d{^2}{ \Upsilon}}{d{\rho}^2}
 - (\alpha -1) \rho \frac{d{ \Upsilon}}{d{\rho}}
  + ( \breve{\beta}^2 \rho^2 - \breve{\lambda}^2) \Upsilon = 0.
     \end{array}
  \right.
  \label{eq-sep-x_2-x_0-hyper-cyl}
  \end{equation}
The first equation of the system $(\ref{eq-sep-x_2-x_0-hyper-cyl})$ may be solved using hyperbolic functions: \\
$  \Xi_{\breve{\beta}}(x_0) = B_{1, \breve{\beta}} \cosh{\breve{\beta} x_0} + B_{2, \breve{\beta}} \sinh{\breve{\beta} x_0}$;
 $\ B_{1, \breve{\beta}}, B_{2, \breve{\beta}}= const \in \mathbb R$.

If $B_{1, \breve{\beta}} = 1$ and $B_{2, \breve{\beta}}= 1$, then $ \ \Xi_{\breve{\beta}}(x_0) = e^{\breve{\beta} x_0}$  (see, e.g., \cite{BrKaeh:2016}).

Assume that $\breve{\beta} \neq 0$.
The second equation of the system $(\ref{eq-sep-x_2-x_0-hyper-cyl})$
may be solved using linear independent solutions: \\
$  \Upsilon_{\breve{\lambda}, \breve{\beta}}(\rho) = {\rho}^\frac{\alpha}{2}
\left[ A_{1, \breve{\lambda}, \breve{\beta}} J_{\frac{\sqrt{\alpha^2 + 4\breve{\lambda}^2}}{2}}( \rho \breve{\beta})
+ A_{2, \breve{\lambda}, \breve{\beta}} Y_{\frac{\sqrt{\alpha^2 + 4\breve{\lambda}^2}}{2}}( \rho \breve{\beta}) \right],$ \\
where $J_{ \breve{\nu}}(\breve{\xi})$ and $Y_{
\breve{\nu}}(\breve{\xi})$ are Bessel functions of the first and
second kind of real order $ \breve{\nu}= {\frac{\sqrt{\alpha^2 +
4\breve{\lambda}^2}}{2}}$ and real argument $\breve{\xi} = \rho
\breve{\beta}$;
  $ \ A_{1, \breve{\lambda}, \breve{\beta}}$, $A_{2, \breve{\lambda}, \breve{\beta}}= const \in \mathbb R$.

This implies the following formulation.

 \begin{thm}
 A special class of exact solutions of the Eq. $(\ref{alpha-axial-hyperbolic-3-cyl})$
satisfying the relations  $(\ref{Laplace-Beltrami-equation-3-cyl-sep})$, $(\ref{equation-beta-sep-hyper-cyl})$, $\breve{\beta} \neq 0$
 in three dimensional setting  may be obtained using Bessel functions of the first and second kind in cylindrical coordinates:
$$
 h_{\breve{\beta}}(x_0, \theta, \rho) = \sum_{\breve{\lambda}= -\infty}^\infty \left(C_{1,\breve{\lambda}}\cos(\breve{\lambda}\theta)+C_{2,\breve{\lambda}}\sin(\breve{\lambda}\theta)\right)  g_{\breve{\lambda}, \breve{\beta}}(x_0, \rho),
$$
where
$$
 g_{\breve{\lambda}, \breve{\beta}}(x_0, \rho) = \left(B_{1, \breve{\beta}} \cosh (\breve{\beta} x_0) + B_{2, \breve{\beta}} \sinh (\breve{\beta} x_0) \right) \Upsilon_{\breve{\lambda}, \breve{\beta}}(\rho)
$$
and
$$
 \Upsilon_{\breve{\lambda}, \breve{\beta}}(\rho) = {\rho}^\frac{\alpha}{2}
\left[ A_{1, \breve{\lambda}, \breve{\beta}} J_{\frac{\sqrt{\alpha^2 + 4\breve{\lambda}^2}}{2}}( \rho \breve{\beta})
+ A_{2, \breve{\lambda}, \breve{\beta}} Y_{\frac{\sqrt{\alpha^2 + 4\breve{\lambda}^2}}{2}}( \rho \breve{\beta}) \right].
$$
 \end{thm}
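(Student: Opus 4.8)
The proof is essentially the separation-of-variables derivation assembled just above the statement, so the plan is to organize those steps and supply the one substantive computation, namely the reduction of the radial equation to Bessel's equation. First I would impose the ansatz $h(x_0,\theta,\rho) = g(x_0,\rho)\,s(\theta)$ in Eq. $(\ref{alpha-axial-hyperbolic-3-cyl})$; isolating the purely $\theta$-dependent quotient yields the relations $(\ref{Laplace-Beltrami-equation-3-cyl-sep})$ with separation constant $\breve\lambda$, hence the decoupled system $(\ref{system cylindrical alpha, azimuthal separation})$. Single-valuedness of $h$ in the azimuthal variable forces $\breve\lambda\in\mathbf Z$, so the first equation of that system integrates to $s_{\breve\lambda}(\theta) = C_{1,\breve\lambda}\cos(\breve\lambda\theta) + C_{2,\breve\lambda}\sin(\breve\lambda\theta)$.

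Next I would apply $g(x_0,\rho) = \Xi(x_0)\,\Upsilon(\rho)$ to the second equation of $(\ref{system cylindrical alpha, azimuthal separation})$; splitting off the $x_0$-part produces the relations $(\ref{equation-beta-sep-hyper-cyl})$ with separation constant $\breve\beta$ and the system $(\ref{eq-sep-x_2-x_0-hyper-cyl})$. The equation $\Xi'' - \breve\beta^2\Xi = 0$ is solved at once by $\Xi_{\breve\beta}(x_0) = B_{1,\breve\beta}\cosh(\breve\beta x_0) + B_{2,\breve\beta}\sinh(\breve\beta x_0)$, which supplies the hyperbolic factor of $g_{\breve\lambda,\breve\beta}$.

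The crux is the radial equation $\rho^2\Upsilon'' - (\alpha-1)\rho\,\Upsilon' + (\breve\beta^2\rho^2 - \breve\lambda^2)\Upsilon = 0$. I would put $\Upsilon(\rho) = \rho^{\alpha/2}V(\rho)$, which removes the term proportional to $(\alpha-1)$ and, after dividing through by $\rho^{\alpha/2}$, leaves $\rho^2 V'' + \rho V' + \bigl(\breve\beta^2\rho^2 - \frac{\alpha^2 + 4\breve\lambda^2}{4}\bigr)V = 0$; the rescaling $\breve\xi = \breve\beta\rho$, admissible precisely because $\breve\beta\neq 0$, then turns this into the standard Bessel equation $\breve\xi^2 V_{\breve\xi\breve\xi} + \breve\xi V_{\breve\xi} + (\breve\xi^2 - \breve\nu^2)V = 0$ of order $\breve\nu = \frac12\sqrt{\alpha^2 + 4\breve\lambda^2}$. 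Its general solution $V = A_{1,\breve\lambda,\breve\beta}J_{\breve\nu}(\breve\xi) + A_{2,\breve\lambda,\breve\beta}Y_{\breve\nu}(\breve\xi)$ reproduces the stated $\Upsilon_{\breve\lambda,\breve\beta}(\rho)$, and by linearity of Eq. $(\ref{alpha-axial-hyperbolic-3-cyl})$ I would superpose the products $g_{\breve\lambda,\breve\beta} = \Xi_{\breve\beta}\,\Upsilon_{\breve\lambda,\breve\beta}$ over $\breve\lambda\in\mathbf Z$ to obtain $h_{\breve\beta}$.

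I expect the main obstacle to be bookkeeping rather than analysis: keeping the dependence of the separation constants $\breve\lambda$ and $\breve\beta$ straight through both stages and tracking the indices on the constants $A_{i,\breve\lambda,\breve\beta}$, $B_{i,\breve\beta}$, $C_{i,\breve\lambda}$, together with the observation that the case $\breve\beta = 0$ must be excluded here, since it collapses the radial equation to a Cauchy--Euler equation with a different solution basis. Convergence of the $\breve\lambda$-series is not part of the assertion; the identity is to be read formally, or under the usual decay conditions on the coefficients.
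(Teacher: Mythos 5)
Your proposal is correct and follows essentially the same route as the paper, which proves the theorem by exactly this two-stage separation of variables assembled in the preceding text. The only difference is that you explicitly carry out the substitution $\Upsilon(\rho)=\rho^{\alpha/2}V(\rho)$ reducing the radial equation to Bessel's equation of order $\tfrac{1}{2}\sqrt{\alpha^2+4\breve{\lambda}^2}$ (a computation that checks out), whereas the paper simply quotes the solution of that ODE from the handbook literature.
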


 \begin{rem}
Suppose that a set of solutions of the Eq. $(\ref{alpha-axial-hyperbolic-3-cyl})$ satisfying the relations $(\ref{Laplace-Beltrami-equation-3-cyl-sep})$, $(\ref{equation-beta-sep-hyper-cyl})$,
where $ \Xi(x_0) = 1$, $ \breve{\beta} = 0$.
 Conditions of transverse fields are fulfilled, where $h(x_0, \theta, \rho) = \Upsilon(\rho) s(\theta)$,
 $ \ E_1 = \frac{d{ \Upsilon}}{d{\rho}} s(\theta) \cos{\theta}$  $- \Upsilon(\rho) \frac{d{s}}{d{\theta}} \frac{ \sin{\theta}}{\rho}$,
$ \ E_2 = \frac{d{ \Upsilon}}{d{\rho}} s(\theta) \sin{\theta}$ $+ \Upsilon(\rho) \frac{d{s}}{d{\theta}} \frac{ \cos{\theta}}{\rho}$.

 The Eq. $(\ref{alpha-axial-hyperbolic-3-cyl})$ in cylindrical coordinates is represented as
$$
 \rho^2 \frac{\partial {^2}{h}}{\partial{\rho}^2}
  - (\alpha -1) \rho \frac{\partial{h}}{\partial{\rho}}
 + \frac{\partial {^2}{h}}{\partial{\theta}^2} = 0,
$$
whereas the second equation  of the system $(\ref{eq-sep-x_2-x_0-hyper-cyl})$ takes the form of the Euler equation:
\begin{equation}
  \rho^2 \frac{d{^2}{ \Upsilon(\rho)}}{d{\rho}^2}
 - (\alpha -1) \rho \frac{d{ \Upsilon(\rho)}}{d{\rho}}
   - \breve{\lambda}^2 \Upsilon(\rho) = 0.
 \label{eq-Euler-cyl}
  \end{equation}
 The Eq. $(\ref{eq-Euler-cyl})$ may be solved using power functions (see, e.g., \cite{PolZait:Ordin-2017}): \\
$ \ \Upsilon(\rho) = A_{1, \breve{\lambda}} \rho^{\frac{ \alpha +
\sqrt{\alpha^2 + 4\breve{\lambda}^2}}{2}} +$ $ A_{2,
\breve{\lambda}} \rho^{\frac{ \alpha - \sqrt{\alpha^2 +
4\breve{\lambda}^2}}{2}};$ $\ A_{1, \breve{\lambda}}, A_{2,
\breve{\lambda}} = const \in \mathbb R$.

The system $(\ref{eq:A_3^alpha-system})$ leads to a family of Vekua
type systems
\begin{equation}
\left\{
      \begin{array}{l}
     (x_1^2+x_2^2) \left(\frac{\partial{u_1}}{\partial{x_1}} + \frac{\partial{u_2}}{\partial{x_2}} \right)  - \alpha(x_1u_1 + x_2u_2) = 0, \\
      \frac{\partial{u_1}}{\partial{x_2}}= \frac{\partial{u_2}}{\partial{x_1}}.
     \end{array}
  \right.
 \label{Vekua-type-x_1,x_2}
  \end{equation}

General class of $C^1$-solutions of Vekua type systems
$(\ref{Vekua-type-x_1,x_2})$ is equivalently represented as general
class of $C^1$-solutions of the static Maxwell system
$(\ref{alpha-axial-isotropic-electrostatic-Maxwell-system-3})$ in
the plane $(x_1, x_2)$:
$$
\left\{
      \begin{array}{l}
        (x_1^2+x_2^2) \left(\frac{\partial{E_1}}{\partial{x_1}} + \frac{\partial{E_2}}{\partial{x_2}} \right)  - \alpha(x_1E_1 + x_2E_2) = 0, \\
      \frac{\partial{E_1}}{\partial{x_2}} = \frac{\partial{E_2}}{\partial{x_1}}.
     \end{array}
  \right.
$$
 \end{rem}

II. On the other hand,  under the second condition of separation of variables $g(x_0,  \rho) = \Xi(x_0)  \Upsilon(\rho)$ relations
\begin{equation}
  -   \frac{1}{\Xi} \frac{d{^2}{\Xi}}{d{x_0}^2} =
    \frac{1}{ \Upsilon} \frac{d{^2}{ \Upsilon}}{d{\rho}^2}
 - \frac{(\alpha -1)} { \Upsilon \rho} \frac{d{ \Upsilon}}{d{\rho}}
  - \frac{ \breve{\lambda}^2}{\rho^2} =
     \breve{\mu}^2  \ \ \ \ \  ( \breve{\mu}  = const \in \mathbb R )
\label{equation-mu-sep-cyl}
  \end{equation}
are equivalent to the following system of ordinary differential equations:
\begin{equation}
\left\{
      \begin{array}{l}
    \frac{d{^2}{\Xi}}{d{x_0}^2} + \breve{\mu}^2  \Xi = 0, \\
 \rho^2 \frac{d{^2}{ \Upsilon}}{d{\rho}^2}
 - (\alpha -1) \rho \frac{d{ \Upsilon}}{d{\rho}}
  - ( \breve{\mu}^2 \rho^2 + \breve{\lambda}^2) \Upsilon = 0.
     \end{array}
  \right.
  \label{equation-mu-sep-x_2-x_0-cyl}
  \end{equation}
The first equation may be solved using trigonometric functions: \\
$ \ \Xi_{\breve{\mu}}(x_0) = B_{1, \breve{\mu}} \cos{\breve{\mu}
x_0} + B_{2, \breve{\mu}} \sin{\breve{\mu} x_0},$ where
$\breve{\mu}\in \mathbb{Z}$; $ \ B_{1, \breve{\mu}}, B_{2,
\breve{\mu}}= const \in \mathbb R$.

Suppose that $\breve{\mu} \neq 0$, and relations $(\ref{Laplace-Beltrami-equation-3-cyl-sep})$, $(\ref{equation-mu-sep-cyl})$ are fulfilled.
The second equation of the system $(\ref{equation-mu-sep-x_2-x_0-cyl})$
 may be solved using Bessel functions of the first kind $J_{ \breve{\nu}}(\breve{\xi})$ and second kind $Y_{ \breve{\nu}}(\breve{\xi})$
of real order $ \breve{\nu}= {\frac{\sqrt{\alpha^2 + 4\breve{\lambda}^2}}{2}}$ and purely imaginary argument $\breve{\xi} = i \rho \breve{\mu}$: \\
$ \Upsilon_{\breve{\lambda}, \breve{\mu}}(\rho)= {\rho}^\frac{\alpha}{2} \left[ A_{1, \breve{\lambda}, \breve{\mu}} J_{\frac{\sqrt{\alpha^2 + 4\breve{\lambda}^2}}{2}}\left( i \rho \breve{\mu} \right)
+ A_{2, \breve{\lambda}, \breve{\mu}} Y_{\frac{\sqrt{\alpha^2 + 4\breve{\lambda}^2}}{2}}\left( i \rho \breve{\mu} \right) \right];$
 $ A_{1, \breve{\lambda}, \breve{\mu}}, A_{2, \breve{\lambda}, \breve{\mu}} = const \in \mathbb R$.

 \begin{rem}
 New class of solutions of the Eq. $(\ref{alpha-axial-hyperbolic-3-cyl})$
satisfying the relations $(\ref{Laplace-Beltrami-equation-3-cyl-sep})$ in three dimensional setting may be obtained
using solutions of the elliptic Euler-Poisson-Darboux equation in cylindrical coordinates \cite{Aksenov:2005,Aksenov:2017}.
Change of dependent variable $g =  \rho^{\frac{ \alpha \pm \sqrt{\alpha^2 + 4\breve{\lambda}^2}}{2}} w$
 allows us to transform the second equation of the system $(\ref{system cylindrical alpha, azimuthal separation})$
into the equation
$$
\rho \left( \frac{\partial{^2}{w}}{\partial{x_0}^2} +  \frac{\partial {^2}{w}}{\partial{\rho}^2} \right)
 + \left( 1 \pm \sqrt{\alpha^2 + 4\breve{\lambda}^2} \right) \frac{\partial{w}}{\partial{\rho}} = 0.
$$
 \end{rem}

\section{The Static Maxwell System in Special Bi-Directional Planarly Layered Media and
$(\alpha_1, \alpha_2)$-Bi-Hyperbolic Non-Euclidean Modification of the System $(R)$}

Consider the specifics of exact solutions of the system
$(\ref{Bryukhov-Kaehler-3})$
 into the framework of the static Maxwell system in bi-directional planarly layered media,
where $\phi = \phi_1(x_1)\phi_2(x_2)$, $\phi_1(x_1) >0$, $\phi_2(x_2) >0$
\begin{equation}
  \left\{
\begin{array}{l}
  \mathrm{div} \, ( \phi_1(x_1)\phi_2(x_2) \vec E ) = 0, \\
  \mathrm{curl}{\ \vec E} = 0.
 \end{array}
\right.
 \label{GHBiR-isotropic-electrostatic-Maxwell-system-3}
\end{equation}

General class of $C^1$-solutions of the system $(\ref{GHBiR-isotropic-electrostatic-Maxwell-system-3})$
   is equivalently represented as general class of $C^1$-solutions of the system
 \begin{equation}
 \left\{
     \begin{array}{l}
      \phi_1(x_1)\phi_2(x_2) \left(\frac{\partial{u_0}}{\partial{x_0}}-
      \frac{\partial{u_1}}{\partial{x_1}}-  \frac{\partial{u_2}}{\partial{x_2}}\right)
-   \left( \frac{d{{\phi}_1}}{d{x_1}}u_1 + \frac{d{{\phi}_2}}{d{x_2}}u_2 \right)  = 0,  \\
      \frac{\partial{u_0}}{\partial{x_1}}=-\frac{\partial{u_1}}{\partial{x_0}}, \ \ \ \ \
      \frac{\partial{u_0}}{\partial{x_2}}=-\frac{\partial{u_2}}{\partial{x_0}}, \\
      \frac{\partial{u_1}}{\partial{x_2}}=\frac{\partial{u_2}}{\partial{x_1}},
     \end{array}
  \right.
\label{Bryukhov-Bihyperbolic-3}
\end{equation}
where  $ \vec E = (u_0, -u_1, -u_2)$.

The equation  $(\ref{Lap-Bel-eq-3-general})$ is written as
$$
 \phi_1(x_1)\phi_2(x_2) \left( \frac{{\partial}^2{h}}{{\partial{x_0}}^2} + \frac{{\partial}^2{h}}{{\partial{x_1}}^2} + \frac{{\partial}^2{h}}{{\partial{x_2}}^2} \right)
 +   \frac{d{{\phi}_1}}{d{x_1}} \frac{\partial{h}}{\partial{x_1}} +
  \frac{d{{\phi}_2}}{d{x_2}} \frac{\partial{h}}{\partial{x_2}} =0.
$$
  Suppose that  $\phi_1(x_1) =  x_1^{-\alpha_1}$, $\phi_2(x_2) =  x_2^{-\alpha_2}$ $(\alpha_1, \alpha_2 \in \mathbb{R})$.
Three dimensional elliptic equation with two singular coefficients
\begin{equation}
 \Delta{h} - \frac{\alpha_1}{x_1} \frac{\partial{h}}{\partial{x_1}} - \frac{\alpha_2}{x_2} \frac{\partial{h}}{\partial{x_2}} =0
\label{alpha-Bihyperbolic-3}
\end{equation}
is sometimes referred to as generalized bi-axially symmetric potential equation in three variables
(see, e.g., \cite{QuinnWeinacht,Zwillinger,Hasan:2007,KarNieto:2011,Dinh:2019}).

 \begin{rem}
 The invariance of solutions of the Eq. $(\ref{alpha-Bihyperbolic-3})$ under M\"{o}bius transformations
in comparison with solutions of the Weinstein equation in $\mathbb
R^3$ $(\ref{alpha-hyperbolic-3})$ raises important issues for
consideration \cite{AkinLeut:1994}.
 \end{rem}

 The static Maxwell system $(\ref{GHBiR-isotropic-electrostatic-Maxwell-system-3})$  is expressed as
 \begin{equation}
  \left\{
\begin{array}{l}
  \mathrm{div} \, ( x_1^{-\alpha_1} x_2^{-\alpha_2} \vec E ) = 0, \\
  \mathrm{curl}{\ \vec E} = 0,
 \end{array}
\right.
 \label{alpha-bi-plane-layered-electrostatic-Maxwell}
\end{equation}
and the system $(\ref{Bryukhov-Bihyperbolic-3})$  is simplified:
\begin{equation}
\left\{
    \begin{array}{l}
      (\frac{\partial{u_0}}{\partial{x_0}}-
      \frac{\partial{u_1}}{\partial{x_1}}-\frac{\partial{u_2}}{\partial{x_2}}) + \frac{\alpha_1}{x_1} u_1 + \frac{\alpha_2}{x_2} u_2 = 0, \\
      \frac{\partial{u_0}}{\partial{x_1}}=-\frac{\partial{u_1}}{\partial{x_0}},
      \ \ \ \frac{\partial{u_0}}{\partial{x_2}}=-\frac{\partial{u_2}}{\partial{x_0}}, \\
      \frac{\partial{u_1}}{\partial{x_2}}=\ \ \frac{\partial{u_2}}{\partial{x_1}}.
     \end{array}
  \right.
\label{eq:BiH_3^alpha-system}
\end{equation}
Assume that $\alpha_1>0$, $\alpha_2>0$. This system may be considered as $(\alpha_1, \alpha_2)$-bi-hyperbolic non-Euclidean modification of the system $(R)$
 with respect to the conformal metric defined on a quarter-space $\{x_1 > 0, x_2 > 0\}$ by formula:
$$
ds^2 = \frac{d{x_0}^2 + d{x_1}^2 + d{x_2}^2}{ x_1^{2\alpha_1} x_2^{2\alpha_2}}.
$$

 \begin{defn}
Let $\Lambda \subset \mathbb R^3$ $ (x_1 > 0, x_2 > 0)$ be a simply connected open domain, $\alpha_1>0$, $\alpha_2> 0$.
 Every exact solution of the Eq. $(\ref{alpha-Bihyperbolic-3})$ in $\Lambda$
 is called $(\alpha_1, \alpha_2)$-bi-hyperbolic harmonic potential in $\Lambda$.
 \end{defn}

 \begin{prop}
[Criterion of joint class of  $(\alpha_1+ \alpha_2)$-hyperbolic harmonic and  $(\alpha_1, \alpha_2)$-bi-hyperbolic harmonic potentials]
  Every $(\alpha_1+ \alpha_2)$-hyperbolic harmonic potential $h= h(x_0, x_1, x_2)$ in  $\Lambda \subset \mathbb R^3$ $(x_1>0, x_2>0)$
  represents an $(\alpha_1, \alpha_2)$-bi-hyperbolic harmonic potential in $\Lambda$ if and only if
 $x_2 \frac{\partial{h}}{\partial{x_1}} = x_1 \frac{\partial{h}}{\partial{x_2}}$.
 \end{prop}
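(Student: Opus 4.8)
The plan is to use that Eq.~$(\ref{alpha-Bihyperbolic-3})$ and the Weinstein equation $(\ref{alpha-hyperbolic-3})$ with parameter $\alpha=\alpha_1+\alpha_2$ are two ways of writing the \emph{same} ordinary Laplacian $\Delta h$ as a first-order operator acting on $h$, so that the asserted equivalence collapses to a direct comparison of those two first-order expressions. First I would rewrite the hypothesis: since $h$ is a regular $(\alpha_1,\alpha_2)$-bi-hyperbolic harmonic potential, Eq.~$(\ref{alpha-Bihyperbolic-3})$ gives, at every point of $\Lambda$,
\[
\Delta h=\frac{\alpha_1}{x_1}\frac{\partial h}{\partial x_1}+\frac{\alpha_2}{x_2}\frac{\partial h}{\partial x_2}.
\]
On the other hand, $h$ is a regular $(\alpha_1+\alpha_2)$-hyperbolic harmonic potential in $\Lambda$ exactly when $x_2\Delta h-(\alpha_1+\alpha_2)\,\partial h/\partial x_2=0$ on $\Lambda$, i.e.\ when $\Delta h=\frac{\alpha_1+\alpha_2}{x_2}\frac{\partial h}{\partial x_2}$ (here $\Lambda\subset\{x_1>0,x_2>0\}\subset\{x_2>0\}$, so this class is well defined for our $h$, and division by $x_2$ is legitimate).

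The central step is then to equate the two expressions for $\Delta h$ and simplify. This yields
\[
\frac{\alpha_1}{x_1}\frac{\partial h}{\partial x_1}+\frac{\alpha_2}{x_2}\frac{\partial h}{\partial x_2}=\frac{\alpha_1+\alpha_2}{x_2}\frac{\partial h}{\partial x_2};
\]
the $\alpha_2$-terms cancel, and multiplying through by $x_1x_2>0$ leaves the single scalar identity
\[
\alpha_1\Bigl(x_2\frac{\partial h}{\partial x_1}-x_1\frac{\partial h}{\partial x_2}\Bigr)=0 .
\]
Reading this in both directions proves the proposition: if the meridional condition $(\ref{meridional-condition})$ holds then the bracket vanishes, hence $h$ satisfies the $(\alpha_1+\alpha_2)$-hyperbolic Weinstein equation; conversely, if $h$ is $(\alpha_1+\alpha_2)$-hyperbolic harmonic the identity forces $x_2\,\partial h/\partial x_1=x_1\,\partial h/\partial x_2$.

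I do not expect a genuine obstacle here — the argument is essentially a one-line computation, parallel to the proof of the earlier criterion for $\alpha$-hyperbolic and $\alpha$-axial-hyperbolic potentials. The one point that really requires attention is the cancellation of the scalar factor $\alpha_1$ in the last display: the stated equivalence is an honest ``if and only if'' only under the standing hypothesis $\alpha_1\neq0$ (if $\alpha_1=0$ the coefficient $x_1^{-\alpha_1}$ is non-singular, Eq.~$(\ref{alpha-Bihyperbolic-3})$ already coincides with the $\alpha_2$-hyperbolic equation, and every such $h$ is $(\alpha_1+\alpha_2)$-hyperbolic harmonic regardless of $(\ref{meridional-condition})$). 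I would also record, as in the remark following the earlier proposition, that $(\ref{meridional-condition})$ is precisely the meridional condition $(\ref{spec.cond-3})$ — it says $\mathrm{grad}\,h$ is tangent to the meridian half-planes, i.e.\ has vanishing azimuthal component — which is the structural reason the same condition governs this reduction as well; and that the two cancellations above are valid pointwise on $\Lambda$, so no regularity beyond $h\in C^2(\Lambda)$ is used.
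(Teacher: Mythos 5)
Your proof is correct and follows essentially the same route as the paper: both arguments reduce to observing that, for $x_1,x_2>0$, the condition $x_2\,\partial h/\partial x_1 = x_1\,\partial h/\partial x_2$ is equivalent to $\tfrac{1}{x_1}\,\partial h/\partial x_1 = \tfrac{1}{x_2}\,\partial h/\partial x_2$, under which the first-order terms of the two elliptic equations coincide, so the bi-hyperbolic equation turns into the $(\alpha_1+\alpha_2)$-Weinstein equation. Your explicit handling of the converse direction and the observation that the ``only if'' part genuinely requires $\alpha_1\neq 0$ is a legitimate caveat that the paper's terser proof passes over.
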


 \begin{proof}
 Assume that  $x_1>0$ and $x_2>0$. We get $\ x_2 \frac{\partial{h}}{\partial{x_1}} = x_1 \frac{\partial{h}}{\partial{x_2}}$ if and only if
$\ \frac{1}{x_1} \frac{\partial{h}}{\partial{x_1}} = \frac{1}{x_2} \frac{\partial{h}}{\partial{x_2}}$.
This implies that
\begin{equation}
\alpha_1 \frac{1}{x_1} \frac{\partial{h}}{\partial{x_1}} + \alpha_2 \frac{1}{x_2} \frac{\partial{h}}{\partial{x_2}} =
 (\alpha_1+ \alpha_2) \frac{1}{x_1} \frac{\partial{h}}{\partial{x_1}} =
(\alpha_1+ \alpha_2) \frac{1}{x_2} \frac{\partial{h}}{\partial{x_2}}.
  \label{bi-hyperbolic}
  \end{equation}
 \end{proof}

 \begin{rem}
 Necessary and sufficient condition  $(\ref{bi-hyperbolic})$ of joint class of $(\alpha_1+ \alpha_2)$-hyperbolic harmonic and  $(\alpha_1, \alpha_2)$-bi-hyperbolic harmonic potentials
coincides with necessary and sufficient condition $(\ref{meridional-condition})$ of joint class of
$(\alpha_1+ \alpha_2)$-hyperbolic harmonic and $(\alpha_1+ \alpha_2)$-axial-hyperbolic harmonic potentials.
 \end{rem}

  Some new properties of $(\alpha_1, \alpha_2)$-bi-hyperbolic harmonic electrostatic potentials  in three dimensional setting may be demonstrated
by means of separation of variables in Cartesian coordinates.

 Let us look for a class of exact solutions of the equation $(\ref{alpha-Bihyperbolic-3})$
  under condition of $h(x_0, x_1, x_2) =$ $g(x_0,  x_2) s(x_1)$:
$$
 s  \left( \frac{\partial{^2}{g}}{\partial{x_0}^2} +
         \frac{\partial {^2}{g}}{\partial{ x_2}^2} \right) -
\frac{s \alpha_2}{x_2} \frac{\partial{g}}{\partial{ x_2}}  +
        g \frac{d{^2}{s}}{d{x_1}^2} - \frac{ \alpha_1}{x_1} g \frac{d{s}}{d{x_1}}
 = 0.
$$

 Relations
$$
  - g \frac{d{^2}{s}}{d{x_1}^2} + \frac{ \alpha_1}{x_1} g \frac{d{s}}{d{x_1}} =
  s \left( \frac{\partial{^2}{g}}{\partial{x_0}^2} +
         \frac{\partial {^2}{g}}{\partial{x_2}^2} \right)
 - \frac{s \alpha_2}{x_2} \frac{\partial{g}}{\partial{ x_2}}  =
     \breve{\lambda}^2 gs  \ \ \ \ \  ( \breve{\lambda} = const \in \mathbb R )
$$
lead to the following system of equations:
\begin{equation}
\left\{
      \begin{array}{l}
   \frac{d{^2}{s}}{d{x_1}^2} - \frac{ \alpha_1}{x_1} \frac{d{s}}{d{x_1}} + \breve{\lambda}^2  s = 0, \\
   \frac{\partial{^2}{g}}{\partial{x_0}^2} +   \frac{\partial {^2}{g}}{\partial{x_2}^2}
 - \frac{\alpha_2}{x_2} \frac{\partial{g}}{\partial{x_2}}
  - \breve{\lambda}^2 g = 0.
     \end{array}
  \right.
  \label{Laplace-Beltrami equation, bi-sep-3}
  \end{equation}

 The second equation of the system $(\ref{Laplace-Beltrami equation, bi-sep-3})$ coincides
with the second equation of the system $(\ref{Laplace-Beltrami equation, separation-3})$ in case $\alpha_2 = \alpha$.

In contrast to the system $(\ref{Laplace-Beltrami equation, separation-3})$,
the first equation of the system $(\ref{Laplace-Beltrami equation, bi-sep-3})$ takes the form of the modified Emden-Fowler equation  (see, e.g., \cite{PolZait:Ordin-2017}).
Change of independent variable $x_1 = y_1^{\frac{1}{\alpha_1 +1}}$
allows us to transform the given equation into the Emden-Fowler equation, where  $s(y_1) :=  s(x_1(y_1))$:
$$
  \frac{d{^2}{s(y_1)}}{d{y_1}^2} + \frac{\breve{\lambda}^2}{(\alpha_1 +1)^2} y_1^{- \frac{2 \alpha_1}{\alpha_1 +1}} s(y_1) = 0.
$$

\section
{Meridional Electrostatic Fields in Special Cylindrically Layered Media
and the Elliptic Euler-Poisson-Darboux Equation in Cylindrical Coordinates}

Let us compare analytic properties of $\alpha$-hyperbolic harmonic
and $\alpha$-axial-hyperbolic harmonic potentials in cylindrical
coordinates $(x_0, \theta, \rho)$.

The Weinstein equation in $\mathbb R^3$ $(\ref{alpha-hyperbolic-3})$ in cylindrical coordinates takes the following form:
 \begin{equation}
 \rho^2 \left( \frac{\partial{^2}{h}}{\partial{x_0}^2} +  \frac{\partial {^2}{h}}{\partial{\rho}^2} \right)
  - (\alpha -1) \rho \frac{\partial{h}}{\partial{\rho}} +
\frac{\partial {^2}{h}}{\partial{\theta}^2} - \alpha \cot{\theta} \frac{\partial{h}}{\partial{\theta}} =0.
  \label{alpha-hyperbolic-3-cyl}
  \end{equation}

The axially symmetric elliptic equation in $\mathbb R^3$ $(\ref{axially-Kravchenko-3-alpha})$
 in cylindrical coordinates is transformed into the equation $(\ref{alpha-axial-hyperbolic-3-cyl})$.

 \begin{prop}
[The second criterion]
  Every $\alpha$-hyperbolic harmonic potential $h= h(x_0, x_1, x_2)$ in  $\Lambda \subset \mathbb R^3$ $(x_2 > 0)$
 represents an $\alpha$-axial-hyperbolic harmonic potential in $\Lambda$ if and only if  in cylindrical coordinates
\begin{equation}
 \frac{\partial{h}}{\partial{\theta}} = 0.
\label{azimuthal-condition}
\end{equation}
 \end{prop}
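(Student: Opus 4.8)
The plan is to reduce the claim to the first criterion of joint class already established (the Proposition whose condition is $(\ref{meridional-condition})$), which asserts that a regular $\alpha$-hyperbolic harmonic potential $h$ in $\Lambda$ is simultaneously a regular $\alpha$-axial-hyperbolic harmonic potential in $\Lambda$ if and only if $x_2\,\partial h/\partial x_1 = x_1\,\partial h/\partial x_2$ throughout $\Lambda$. Since $\Lambda\subset\mathbb R^3$ lies in $\{x_2>0\}$, we have $\rho>0$ on $\Lambda$, so cylindrical coordinates $(x_0,\theta,\rho)$ are admissible there; hence it suffices to show that, on $\Lambda$, the azimuthal condition $(\ref{azimuthal-condition})$ is equivalent to $(\ref{meridional-condition})$.

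First I would record the coordinate identity. Writing $x_1=\rho\cos\theta$, $x_2=\rho\sin\theta$, $x_0=x_0$, the chain rule gives $\partial h/\partial\theta = (\partial h/\partial x_1)(\partial x_1/\partial\theta) + (\partial h/\partial x_2)(\partial x_2/\partial\theta) = -\rho\sin\theta\,(\partial h/\partial x_1) + \rho\cos\theta\,(\partial h/\partial x_2) = x_1\,\partial h/\partial x_2 - x_2\,\partial h/\partial x_1$. Consequently $\partial h/\partial\theta\equiv 0$ on $\Lambda$ if and only if $x_2\,\partial h/\partial x_1 = x_1\,\partial h/\partial x_2$ on $\Lambda$, and combining this with the first criterion completes the proof. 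This also makes transparent the remark following $(\ref{meridional-condition})$: for $\vec E=\mathrm{grad}\,h$ one has $u_1=-\partial h/\partial x_1$, $u_2=-\partial h/\partial x_2$, so $u_1x_2=u_2x_1$ is precisely $x_2\,\partial h/\partial x_1 = x_1\,\partial h/\partial x_2$, i.e. the special condition $(\ref{spec.cond-3})$ of Fueter construction.

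Alternatively one may argue entirely in cylindrical coordinates: subtracting $(\ref{alpha-axial-hyperbolic-3-cyl})$ from $(\ref{alpha-hyperbolic-3-cyl})$, the two equations differ by the single term $-\alpha\cot\theta\,\partial h/\partial\theta$, so a solution $h$ of $(\ref{alpha-hyperbolic-3-cyl})$ also solves $(\ref{alpha-axial-hyperbolic-3-cyl})$ exactly when $\alpha\cot\theta\,\partial h/\partial\theta\equiv 0$ on $\Lambda$. The only delicate point — and the place I expect the genuine work to sit — is passing from this to $(\ref{azimuthal-condition})$: for $\alpha\neq 0$ the factor $\cot\theta$ vanishes only on the hyperplane $\{x_1=0\}$, so $\partial h/\partial\theta$ vanishes on the open dense complement and hence, by continuity, on all of $\Lambda$; for $\alpha=0$ both elliptic equations collapse to the Laplace equation and the statement is to be read with the standing hypothesis $\alpha\neq 0$. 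Since the chain-rule route avoids this $\cot\theta$ degeneracy altogether, I would present it as the main argument and mention the PDE-subtraction computation only as a cross-check.
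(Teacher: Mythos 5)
Your argument is correct, and it is worth noting that the paper itself states this Proposition without any written proof: the intended justification is evidently the one you relegate to a cross-check, namely that the two displayed cylindrical forms $(\ref{alpha-hyperbolic-3-cyl})$ and $(\ref{alpha-axial-hyperbolic-3-cyl})$ differ precisely by the term $-\alpha\cot\theta\,\frac{\partial h}{\partial\theta}$, so that a solution of the first solves the second exactly when this term vanishes identically. Your primary route --- the chain-rule identity $\frac{\partial h}{\partial\theta}=x_1\frac{\partial h}{\partial x_2}-x_2\frac{\partial h}{\partial x_1}$, which shows that $(\ref{azimuthal-condition})$ is literally the condition $(\ref{meridional-condition})$ of the first criterion rewritten in cylindrical coordinates --- is an equivalent but cleaner packaging: it makes the relation between the two criteria (and their common identification with the Fueter condition $(\ref{spec.cond-3})$) explicit, and it sidesteps the degeneracy of the factor $\cot\theta$ on $\{x_1=0\}$. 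Two refinements you supply are genuinely absent from the paper and improve on it: the density-plus-continuity step needed to conclude $\frac{\partial h}{\partial\theta}\equiv 0$ from $\alpha\cot\theta\,\frac{\partial h}{\partial\theta}\equiv 0$ (the same care is in fact also needed, via the factor $\alpha x_1$, if one proves the first criterion directly from $(\ref{alpha-hyperbolic-3})$ and $(\ref{axially-Kravchenko-3-alpha})$), and the observation that the ``only if'' direction fails for $\alpha=0$, where both equations reduce to the Laplace equation, so that $\alpha\neq 0$ must be read as a standing hypothesis. No gaps; the proof is complete as written.
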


The second criterion implies class of meridional electrostatic
fields in special cylindrically layered media, where $\phi=
\phi(\rho^{-\alpha}),$  $\alpha \in \mathbb{R}$.
 Thus, joint class of exact solutions of second-order elliptic
equations in cylindrical coordinates
$(\ref{alpha-hyperbolic-3-cyl})$,
$(\ref{alpha-axial-hyperbolic-3-cyl})$
 is equivalently represented as class of exact solutions of the elliptic Euler-Poisson-Darboux equation \cite{Dzhaiani,Aksenov:2005}:
  \begin{equation}
 \rho \left( \frac{\partial{^2}{g}}{\partial{x_0}^2} +  \frac{\partial {^2}{g}}{\partial{\rho}^2} \right)
  - (\alpha -1) \frac{\partial{g}}{\partial{\rho}}
  = 0.
  \label{Euler-Poisson-Darboux equation-meridional}
  \end{equation}

The Eq. $(\ref{Euler-Poisson-Darboux equation-meridional})$  is often referred to as the generalized axially symmetric potential equation (GASPE) \cite{Colton, Zwillinger}.
 Approach of generalized axially symmetric potential theory in cylindrical coordinates
 has been initiated by Weinstein  (see, e.g., \cite{Weinstein:1948-flows,Weinstein:1948-int,Weinstein:1953,Huber:1954,Erd:1956,Gilbert:1960}).
Integral representations of exact solutions of the Eq.
$(\ref{Euler-Poisson-Darboux equation-meridional})$ as generalized
axially symmetric potentials in a simply connected domain have been
obtained by Plaksa and Gryshchuk \cite{GrPlaksa:2009}.
 Linear differential relations between solutions of the Eq. $(\ref{Euler-Poisson-Darboux equation-meridional})$
 have been obtained by Aksenov
\cite{Aksenov:2005}.

 \begin{rem}
The Eq. $(\ref{Euler-Poisson-Darboux equation-meridional})$ allows
us to investigate in more detail various mathematical models
 of meridional fields, in particular, models of electrostatic fields, temperature gradient fields and potential velocity fields in special cylindrically layered media,
where $\phi= \phi(\rho^{-\alpha}),$  $\alpha \in \mathbb{R}$.
 \end{rem}

Let us consider two special subclasses of generalized axially
symmetric potentials under condition of separation of variables
$g(x_0,  \rho) = \Xi(x_0)  \Upsilon(\rho)$.

The first special subclass is provided by hyperbolic functions:   \\
$  \Xi_{\breve{\beta}}(x_0) = B_{1, \breve{\beta}}
\cosh(\breve{\beta} x_0) + B_{2, \breve{\beta}} \sinh(\breve{\beta}
x_0)$;
$\ \breve{\beta}\in \mathbb{R}$, $\ B_{1, \breve{\beta}}, B_{2, \breve{\beta}}= const \in \mathbb R$ \\
and Bessel functions of the first and second kind of order $\frac{\alpha}{2}$ and real argument:  \\
$  \Upsilon_{0, \breve{\beta}}(\rho) = {\rho}^\frac{\alpha}{2}
\left[ A_{1, 0, \breve{\beta}} J_{\frac{\alpha}{2}}( \breve{\beta} \rho)
+ A_{2, 0, \breve{\beta}} Y_{\frac{\alpha}{2}}( \breve{\beta} \rho) \right]$;
 $ \ A_{1, 0, \breve{\beta}}$, $A_{2, 0, \breve{\beta}}= const \in \mathbb R$.

The second special subclass is provided by trigonometric functions:  \\
$ \ \Xi_{\breve{\mu}}(x_0) = B_{1, \breve{\mu}} \cos(\breve{\mu}
x_0) + B_{2, \breve{\mu}} \sin(\breve{\mu} x_0)$;
 $\ \breve{\mu}\in \mathbb{Z}$, $\ B_{1, \breve{\mu}}, B_{2, \breve{\mu}}= const \in \mathbb R$ \\
and Bessel functions of the first and second kind of order $\frac{\alpha}{2}$ and purely imaginary argument: \\
$ \Upsilon_{0, \breve{\mu}}(\rho)= {\rho}^\frac{\alpha}{2} \left[ A_{1, 0, \breve{\mu}} J_{\frac{\alpha}{2}} (i \breve{\mu} \rho)
+ A_{2, 0, \breve{\mu}} Y_{\frac{\alpha}{2}} ( i \breve{\mu} \rho) \right]$;
 $ \ A_{1, 0, \breve{\mu}}$, $A_{2, 0, \breve{\mu}}= const \in \mathbb R$.

 Every generalized axially symmetric potential $g = g(x_0, \rho)$
indicates the existence of the so-called Stokes stream function
$\hat{g} = \hat{g}(x_0, \rho)$ which is defined by the generalized
Stokes-Beltrami system in the meridian half-plane $(x_0,\rho)$
$(\rho
> 0)$ in the context of GASPT
 (see, e.g., \cite{Weinstein:1948-int,Weinstein:1953,Polozhii:1973,DinhTuyet:2020}):
\begin{equation}
\left\{
      \begin{array}{l}
       \rho^{-\alpha +1} \frac{\partial{g}}{\partial{x_0}} = \frac{\partial{\hat{g}}}{\partial{\rho}},  \\
    \rho^{-\alpha +1} \frac{\partial{g}}{\partial{\rho}}= - \frac{\partial{\hat{g}}}{\partial{x_0}}.
     \end{array}
  \right.
 \label{generalized Stokes-Beltrami}
\end{equation}
 The Stokes stream function $\hat{g} = \hat{g}(x_0, \rho)$, in contrast to generalized axially symmetric potential $g = g(x_0, \rho)$,
 satisfies the elliptic Euler-Poisson-Darboux equation
$$
  \rho \left( \frac{\partial{^2}{\hat{g}}}{\partial{x_0}^2} +  \frac{\partial {^2}{\hat{g}}}{\partial{\rho}^2} \right)
  + (\alpha -1) \frac{\partial{\hat{g}}}{\partial{\rho}} = 0.
$$

 On the other hand, the Eq. $(\ref{Euler-Poisson-Darboux equation-meridional})$ leads to a
family of Vekua type systems studied   by Sommen, Pe\~{n}a Pe\~{n}a,
Sabadini \cite{Sommen:1988,PenaSommen:2012,PenaSabSommen:2017} and
Eriksson, Orelma, Vieira \cite{ErOrelVie:2017} in the context of
monogenic functions of axial type with different values of the
parameter $\alpha$:
\begin{equation}
\left\{
      \begin{array}{l}
       \rho \left( \frac{\partial{u_0}}{\partial{x_0}} - \frac{\partial{u_{\rho}}}{\partial{\rho}} \right)  +  (\alpha -1) u_{\rho} = 0,  \\
      \frac{\partial{u_0}}{\partial{\rho}}=-\frac{\partial{u_{\rho}}}{\partial{x_0}}.
     \end{array}
  \right.
\label{A_3^alpha system-meridional}
\end{equation}
We should take into account that in our setting $ \  u_0 =
\frac{\partial{g}}{\partial{x_0}}, \ \ \ \ u_{\rho} = -
\frac{\partial{g}}{\partial{\rho}}.$

  The static Maxwell system $(\ref{alpha-axial-isotropic-electrostatic-Maxwell-system-3})$ is reduced to the following two-dimensional
system:
\begin{equation}
\left\{
      \begin{array}{l}
          \rho \left( \frac{\partial{E_0}}{\partial{x_0}} + \frac{\partial{E_{\rho}}}{\partial{\rho}} \right)  -  (\alpha -1) E_{\rho} = 0,  \\
      \frac{\partial{E_0}}{\partial{\rho}} = \frac{\partial{E_{\rho}}}{\partial{x_0}},
     \end{array}
  \right.
\label{Bryukhov-elec-meridional}
\end{equation}
where
\begin{equation}
 E_0= u_0, \ \ \ \
 E_1  = \frac{x_1}{\rho} E_{\rho} = -u_1,  \ \ \ \
E_2  = \frac{x_2}{\rho} E_{\rho} = -u_2,  \ \ \ \
 E_{\rho} = -u_{\rho}.
\label{meridional-elec-fields}
\end{equation}

The principal invariants of the $EFG$ tensor within meridional
fields in special cylindrically layered media may be demonstrated
explicitly. The $EFG$ tensor $(\ref{Hessian matrix})$ is
substantially simplified:
\begin{equation}
\left(
\begin{array}{lll}
 \left[ -\frac{\partial{E_{\rho}}}{\partial{\rho}} +\frac{E_{\rho}}{\rho} (\alpha -1) \right] &  \frac{\partial{E_{\rho}}}{\partial{x_0}} \frac{x_1}{\rho} &
 \frac{\partial{E_{\rho}}}{\partial{x_0}} \frac{x_2}{\rho} \\
\frac{\partial{E_{\rho}}}{\partial{x_0}} \frac{x_1}{\rho}  & \left( \frac{\partial{E_{\rho}}}{\partial{\rho}} \frac{x_1^2}{\rho^2}  + \frac{E_{\rho}}{\rho} \frac{x_2^2}{\rho^2}\right)  &
 \left( \frac{\partial{E_{\rho}}}{\partial{\rho}}- \frac{E_{\rho}}{\rho}\right)  \frac{x_1 x_2}{\rho^2}  \\
\frac{\partial{E_{\rho}}}{\partial{x_0}} \frac{x_2}{\rho}  & \left( \frac{\partial{E_{\rho}}}{\partial{\rho}}- \frac{E_{\rho}}{\rho}\right)  \frac{x_1 x_2}{\rho^2}  &
\left( \frac{\partial{E_{\rho}}}{\partial{\rho}} \frac{x_2^2}{\rho^2} + \frac{E_{\rho}}{\rho} \frac{x_1^2}{\rho^2}\right)
 \end{array}
\right)
\label{EFG tensor-merid}
\end{equation}

This implies the following formulation.

 \begin{thm}
 Roots of the characteristic equation $(\ref{characteristic lambda})$
 of the $EFG$ tensor  $(\ref{EFG tensor-merid})$ are given by exact formulas \\
$ \lambda_{0} =  \frac{E_{\rho}}{\rho}$; \\
$ \lambda_{1, 2} = \frac{(\alpha -1)}{2}  \frac{ E_{\rho}}{ \rho}
\pm  \sqrt{ \frac{(\alpha -1)^2}{4} \left( \frac{E_{\rho}}{ \rho}
\right)^2 - (\alpha -1) \frac{ E_{\rho}}{\rho}
\frac{\partial{E_{\rho}}}{\partial{\rho}} + \left(
\frac{\partial{E_{\rho}}}{\partial{x_0}}\right)^2 + \left(
\frac{\partial{E_{\rho}}}{\partial{\rho}} \right)^2}, $ \\
such that $ \quad \lambda_{1}\lambda_{2} = (\alpha -1) \frac{
E_{\rho}}{\rho} \frac{\partial{E_{\rho}}}{\partial{\rho}} - \left(
\frac{\partial{E_{\rho}}}{\partial{x_0}}\right)^2 - \left(
\frac{\partial{E_{\rho}}}{\partial{\rho}} \right)^2$.
 \end{thm}

 \begin{proof}
 The principal invariants of the $EFG$ tensor $(\ref{EFG tensor-merid})$  are written as

$ I_{\mathbf{J}(\vec E)} = \mathrm{div} \, \vec E = \alpha
\frac{E_{\rho}}{\rho}$,

$ II_{\mathbf{J}(\vec E)} = - \left[ \left(
\frac{\partial{E_\rho}}{\partial{x_0}}  \right)^2 + \left(
\frac{\partial{E_{\rho}}}{\partial{\rho}} \right)^2 \right] +
(\alpha -1) \frac{E_{\rho}}{\rho} \left(
\frac{\partial{E_{\rho}}}{\partial{\rho}} + \frac{E_{\rho}}{\rho}
\right),$

$ III_{\mathbf{J}(\vec E)}
 =  - \frac{E_{\rho}}{\rho} \left[  \left( \frac{\partial{E_\rho}}{\partial{x_0}}  \right)^2 +  \left( \frac{\partial{E_{\rho}}}{\partial{\rho}} \right)^2 \right]
+ (\alpha -1) \left( \frac{E_{\rho}}{ \rho} \right)^2 \frac{\partial{E_{\rho}}}{\partial{\rho}}$.

The characteristic equation $(\ref{characteristic lambda})$ into the
framework of the system $(\ref{Bryukhov-elec-meridional})$ may be
factored:
$$
  \left( \lambda - \frac{E_{\rho}}{\rho} \right)
\left[ \lambda^2 - (\alpha -1) \frac{ E_{\rho}}{\rho}\lambda + (\alpha -1) \frac{ E_{\rho}}{\rho} \frac{\partial{E_{\rho}}}{\partial{\rho}}
- \left( \frac{\partial{E_{\rho}}}{\partial{x_0}}\right)^2 - \left( \frac{\partial{E_{\rho}}}{\partial{\rho}} \right)^2 \right] = 0.
$$
 \end{proof}

 \begin{cor}
 Assume that the electric field strength $\vec E = (E_0, \frac{x_1}{\rho} E_{\rho}, \frac{x_2}{\rho} E_{\rho})$
  satisfies the system $(\ref{Bryukhov-elec-meridional})$.
 The set of degenerate points of the $EFG$ tensor $(\ref{EFG tensor-merid})$ is provided by two independent equations:
$$
{E_{\rho}}=0,\quad
\left(\frac{\partial{E_{\rho}}}{\partial{x_0}}\right)^2
+\left(\frac{\partial{E_{\rho}}}{\partial{\rho}}\right)^2
-(\alpha-1)\frac{E_{\rho}}{\rho}\frac{\partial{E_{\rho}}}{\partial{\rho}}=0.
 $$
 \end{cor}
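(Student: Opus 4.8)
The plan is to deduce the singular set directly from the factorization of the characteristic equation that was obtained in the preceding theorem on the roots. By the definition of the singular set of the $EFG$ tensor, a point $x^{*}\in\Lambda$ belongs to it precisely when $\det\mathbf{J}(\vec E)=0$; and $\det\mathbf{J}(\vec E)$ equals the third principal invariant $III_{\mathbf{J}(\vec E)}$, i.e. the product $\lambda_{0}\lambda_{1}\lambda_{2}$ of the three roots of $(\ref{characteristic lambda})$. So the first step is to invoke the explicit factorization of $(\ref{characteristic lambda})$ for the simplified meridional $EFG$ tensor $(\ref{EFG tensor-merid})$ established above, from which
$$
III_{\mathbf{J}(\vec E)}=\frac{E_{\rho}}{\rho}\left[(\alpha-1)\frac{E_{\rho}}{\rho}\frac{\partial{E_{\rho}}}{\partial{\rho}}-\left(\frac{\partial{E_{\rho}}}{\partial{x_0}}\right)^{2}-\left(\frac{\partial{E_{\rho}}}{\partial{\rho}}\right)^{2}\right],
$$
in agreement with the expression for $III_{\mathbf{J}(\vec E)}$ computed there directly.

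The second step is to solve $III_{\mathbf{J}(\vec E)}=0$. Since meridional fields are defined in the region $\rho>0$, this product vanishes if and only if one of its two factors vanishes: the factor $E_{\rho}/\rho$ vanishes exactly on the locus $E_{\rho}=0$, which is the first formula, while the bracketed factor vanishes exactly on the locus $\left(\partial{E_{\rho}}/\partial{x_0}\right)^{2}+\left(\partial{E_{\rho}}/\partial{\rho}\right)^{2}-(\alpha-1)\frac{E_{\rho}}{\rho}\frac{\partial{E_{\rho}}}{\partial{\rho}}=0$, which is the second. Hence the singular set of $(\ref{EFG tensor-merid})$ is the union of these two loci, as claimed.

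Finally, to justify the word \emph{independent}, I would note that neither equation is a consequence of the other: on $\{E_{\rho}=0\}$ the second equation degenerates to $\left(\partial{E_{\rho}}/\partial{x_0}\right)^{2}+\left(\partial{E_{\rho}}/\partial{\rho}\right)^{2}=0$, which does not hold in general, and conversely the vanishing of the quadratic factor does not force $E_{\rho}=0$; so the two formulas describe genuinely distinct components. I do not anticipate a real obstacle here — the statement is essentially a one-line corollary of the factorization. The only points requiring care are purely bookkeeping: identifying $\det\mathbf{J}(\vec E)$ with $III_{\mathbf{J}(\vec E)}$ with the correct sign, and recalling that the simplified form $(\ref{EFG tensor-merid})$ of the tensor, and hence the factorization, is legitimately available precisely because $\vec E$ is assumed to satisfy the static Maxwell system $(\ref{Bryukhov-elec-meridional})$ in the meridian half-plane.
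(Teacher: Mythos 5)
Your proposal is correct and follows essentially the same route as the paper: the corollary is an immediate consequence of the explicit factorization of the characteristic equation established in the proof of the preceding theorem, since $\det\mathbf{J}(\vec E)=III_{\mathbf{J}(\vec E)}=\lambda_0\lambda_1\lambda_2$ vanishes (with $\rho>0$) exactly when $E_\rho=0$ or the constant term of the quadratic factor vanishes. Your sign bookkeeping and the identification of $III_{\mathbf{J}(\vec E)}$ with the determinant both agree with the expressions computed in the paper.
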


The system $(\ref{Bryukhov-elec-meridional})$
 allows us to demonstrate substantially new properties of meridional fields in inhomogeneous and homogeneous media.

  \begin{cor} [On the zero divergence condition]
Assume that the electric field strength $\vec E = (E_0,
\frac{x_1}{\rho} E_{\rho}, \frac{x_2}{\rho} E_{\rho})$ satisfies the
system~\eqref{Bryukhov-elec-meridional}, where $\alpha \neq 0$.
Every point $x = (x_0, x_1, x_2)$, where $\mathrm{div} \, \vec E
(x_0, x_1, x_2) = 0$, is a degenerate point of the $EFG$
tensor~\eqref{EFG tensor-merid}.
\end{cor}

 Geometric properties of the $EFG$ tensor $(\ref{EFG tensor-merid})$
allow us to introduce the concept of $\alpha$-meridional mappings of
the first and second kind.

 \begin{defn}
Let $\alpha$ be a real parameter, while $\Lambda \subset \mathbb
R^3$  be a simply connected open domain, where $x_1 \neq 0, x_2 \neq
0$. Assume that an exact solution $(u_0, u_1, u_2)$ of the system
$(\ref{eq:A_3^alpha-system})$, where $\alpha \neq 0$, satisfies the
following condition: $x_2 u_1 = x_1 u_2$ in $\Lambda$. Mapping $u =
u_0 + iu_1 + ju_2: \Lambda \rightarrow \mathbb{R}^3$ is called
$\alpha$-meridional mapping of the first kind, and mapping $
\overline{u} = u_0 - iu_1 - ju_2: \Lambda \rightarrow \mathbb{R}^3$
is called $\alpha$-meridional mapping of the second kind,
respectively.
 \end{defn}

 The principal invariants of $\alpha$-meridional mappings of
the second kind coincide with the principal invariants of the $EFG$
tensor $(\ref{EFG tensor-merid})$.

\section
{{The Radially Holomorphic Potential in Electrostatics and
Meridional Models Provided by the Reduced Quaternionic
Laplace-Fueter and Fourier-Fueter Transforms of Real-Valued Original
Functions}}

 A Vekua type system $(\ref{A_3^alpha system-meridional})$ in case $\alpha =1$
 may  be considered as a Cauchy-Riemann type system in the meridian half-plane $(\rho > 0)$
(see, e.g., \cite{Weinstein:1948-flows, Weinstein:1948-int,Weinstein:1953,BrKaeh:2016}):
\begin{equation}
 \left\{
      \begin{array}{l}
        \frac{\partial{u_0}}{\partial{x_0}} - \frac{\partial{u_{\rho}}}{\partial{\rho}}  = 0,  \\
      \frac{\partial{u_0}}{\partial{\rho}}=-\frac{\partial{u_{\rho}}}{\partial{x_0}}.
     \end{array}
  \right.
\label{CR-merid}
\end{equation}

The generalized Stokes-Beltrami system  $(\ref{generalized
Stokes-Beltrami})$ in case $\alpha =1$ leads to the Cauchy-Riemann
type system in the meridian half-plane concerning functions $g =
g(x_0, \rho)$, $\hat{g} = \hat{g}(x_0, \rho)$:
\begin{equation}
\left\{
      \begin{array}{l}
        \frac{\partial{g}}{\partial{x_0}} - \frac{\partial{\hat{g}}}{\partial{\rho}}  = 0,  \\
      \frac{\partial{g}}{\partial{\rho}}=-\frac{\partial{\hat{g}}}{\partial{x_0}}.
     \end{array}
  \right.
\label{Stokes-Beltrami-1}
\end{equation}

Generalized axially symmetric potential $g = g(x_0, \rho)$ and the
Stokes stream function $\hat{g} = \hat{g}(x_0, \rho)$ in case
$\alpha = 1$ satisfy equations
  $$
   \frac{\partial{^2}{g}}{\partial{x_0}^2} +  \frac{\partial {^2}{g}}{\partial{\rho}^2}  =
   0, \quad \quad
 \frac{\partial{^2}{\hat{g}}}{\partial{x_0}^2} +  \frac{\partial {^2}{\hat{g}}}{\partial{\rho}^2}  = 0.
$$

The first-order systems  $(\ref{CR-merid})$,
$(\ref{Stokes-Beltrami-1})$ arise independently in pure mathematics
in a number of seemingly disconnected settings (see, e.g.,
\cite{Weinstein:1953,Aksenov:2005,ColSabStruppa:2009,ColSabStruppa:2016}).

 In particular, an original approach to building special classes of the reduced quaternion-valued regular functions was developed by
Gentili and Struppa in 2006 in the context of the theory of analytic
intrinsic functions on quaternions (see, e.g.,
\cite{Cullen:1965,GentStruppa:2006}). As noted by Gentili and
Struppa \cite{GentStruppa:2006}, "Cullen regular functions are
closely related to a class of functions of the reduced quaternionic
variable $x_0 + ix_1  + jx_2$, studied by
H.~Leutwiler~\cite{Leut:CV20}. This class consists of all the
solutions of a generalized Cauchy-Riemann system of equations, it
contains the natural polynomials, and supports the series expansion
of its elements as well." Nowadays Cullen regular functions are
referred to as  slice regular functions (sometimes to as "slice
monogenic" or "slice hyperholomorphic") (see, e.g.,
\cite{ColSabStruppa:2009,ColSabStruppa:2016,BrKaeh:2016}). As noted
by  K\"{a}hler and the author in 2017 \cite{BrKaeh:2016}, "These are
defined as reduced quaternion-valued functions $F$ which fulfill the
following equation $ DF =\left( \frac{\partial{}}{\partial{x_0}}+I
\frac{\partial{}}{\partial{\rho}} \right) F =0 $ on every slice
domain belonging to the plane spanned by $1$ and $I\in S^2$. Now, if
we additionally impose $F$ being of the form $F=u_0(x_0,\rho)+I
u_\rho(x_0,\rho)$, then the above definition can be written as the
Cauchy-Riemann system $(\ref {CR-merid})$."

On the other hand, a survey of the construction of Clifford regular
elementary functions was given and important properties of a class
of radially regular elementary functions including a
paravector-valued logarithm were deduced by Spr\"{o}{\ss}ig in 1999
\cite{Spr:1999}. Later the concept of radially holomorphic functions
has been developed by G\"{u}rlebeck, Habetha and Spr\"{o}{\ss}ig
\cite{GuHaSp:2008} in the context of the theory of holomorphic
functions in n-dimensional space.


 \begin{defn}
 Radial differential operator is defined as
$$
 \partial_{rad}{G}  := \frac{1}{2} \left( \frac{\partial{}}{\partial{x_0}}- I \frac{\partial{}}{\partial{\rho}} \right) G := G'  \ \ \ \ \ (G = g + I \hat{g}).
$$

Every reduced quaternion-valued function $G = g + I \hat{g}$
satisfying a Cauchy-Riemann type differential equation  in $\Lambda$  $(\rho > 0)$
\begin{equation}
\overline{\partial}_{rad}{G} :=  \frac{1}{2} \left( \frac{\partial{}}{\partial{x_0}} + I \frac{\partial{}}{\partial{\rho}} \right) G = 0
\label{CRDO-con}
\end{equation}
is called a radially holomorphic in $\Lambda$.
The reduced quaternion-conjugate function $\overline{G} = g - I \hat{g}$
is called a radially anti-holomorphic in $\Lambda$.
 \end{defn}

   The notation $\partial_{rad}{G} := G'$ has been justified in \cite{GuHaSp:2008} by some clear
   statements.
In particular, elementary functions of the reduced quaternionic variable
as elementary radially holomorphic functions in $\mathbb R^3$ satisfy the following relations:

$ \ \  \ \  \ \  \ \  \ \  \ \  \ \  \ \  \ \  \ \  [x^{n} := r^{n}(\cos{n\varphi} + I \sin{n\varphi})]' = n x^{n-1}; $

 $  \ \  \ \  \ \  \ \  \ \  \ \  \ \  \ \  \ \  \ \  [e^{x} := e^{x_0}(\cos{\rho}+I \sin{\rho})]'= e^{x};  $

$  \ \  \ \  \ \  \ \  \ \  \ \  \ \  \ \  \ \  \ \  [\cos{x} := \frac{1}{2}( e^{-Ix} + e^{Ix})]'= - \sin{x};  $

$  \ \  \ \  \ \  \ \  \ \  \ \  \ \  \ \  \ \  \ \   [\sin{x} := \frac{I}{2}( e^{-Ix} - e^{Ix})]'= \cos{x};  $

 $ \ \  \ \  \ \  \ \  \ \  \ \  \ \  \ \  \ \  \ \  [ \ln{x} := \ln r +I \varphi]' = x^{-1}.$

 The Eq. $(\ref{CRDO-con})$ implies that
$$
G' = \frac{\partial{G}}{\partial{x_0}}.
$$

 Appropriate concept of radially holomorphic primitives has been initiated in \cite{GuHaSp:2008}.
 \begin{defn}
 Suppose that  a radially holomorphic  function $G = g + I \hat{g}$ in $\Lambda$ satisfies a differential equation
$$
 G' = F,
$$
where  function $F = u_0 + I u_{\rho}$ is also a radially holomorphic in $\Lambda$.
The function $G$ is called a radially holomorphic primitive of the function $F$  in $\Lambda$.
 \end{defn}

 Let us clarify now basic properties of radially holomorphic primitives $G$ in $\Lambda$  $(\rho > 0)$.
Let us consider a curve $(l)$ of a definite direction, defined by real-valued $C^1$-functions $x_0(\xi)$ and $\rho(\xi)$ in the reduced quaternion-valued parametric form
 $ x(\xi) = x_0(\xi) + I \rho(\xi), $   $ x'(\xi) = x'_0(\xi) + I {\rho}'(\xi),$ $ \xi \in \mathbb R,$  where $I = i \cos{\theta} + j \sin{\theta}$, $ \theta = const.$
 Assume that a point $x = (x_0, \rho\cos{\theta}, \rho\sin{\theta})$ belongs to the curve $(l)$ from $x^0 = x_0^0  + I \rho^0$ to $x^1= x_0^1  + I \rho^1$ in $\Lambda$, where
 $\xi$ varies from $\xi^0$ to $\xi^1$ in the closed interval $(\xi^0, \xi^1)$, whilst taking $\xi^0 < \xi^1$ for clarity,
and $ \ x^0 = x(\xi^0), \ $ $x^1 = x(\xi^1)$ (see, e.g., \cite{LavSh,Smirnov}).

 \begin{lem} [On path independent line integral]
Let  $F= u_0 + I u_{\rho}$ be a radially holomorphic function in
$\Lambda$  $(\rho > 0)$. Any reduced quaternion-valued line integral
along the curve $(l)$
$$
 \int\limits_{x^0}^{x} F(x) dx
 := \int\limits_{x^0}^{x} (u_0 dx_0 - u_{\rho} d{\rho})
 + I \int\limits_{x^0}^{x} (u_{\rho} dx_0 + u_0 d{\rho}); \ \ \ dx = dx_0 + I d{\rho}
$$
is path independent if and only if functions $u_0 = u_0(x_0, \rho)$,
$u_{\rho} = u_{\rho}(x_0, \rho)$ satisfy the Cauchy-Riemann type
system in the meridian half-plane~\eqref{CR-merid}, such that
$$
\int\limits_{x^0}^{x} F(x) dx = \int\limits_{\xi^0}^{\xi}
F[x(\hat{\varsigma})] x'( \hat{\varsigma})d\hat{\varsigma} =
\int\limits_{x^0}^{x} dx F(x)  \ \ \ ( \hat{\varsigma} \in \mathbb
R).
$$
 \end{lem}

 \begin{defn}
The reduced quaternion-valued line integral along the curve $(l)$ in $\Lambda$
$$
  \int\limits_{x^0}^{x} F(x) dx = \int\limits_{\xi^0}^{\xi} F[x(\hat{\varsigma})] x'( \hat{\varsigma})d\hat{\varsigma}  \ \ \ ( \hat{\varsigma} \in \mathbb R)
$$
is called  an indefinite integral of radially holomorphic function
$F$  in $\Lambda$.
 \end{defn}

This implies the following formulation.

 \begin{prop} [On integral form of radially holomorphic primitives]
 Every radially holomorphic function $F = u_0 + I u_{\rho}$ in $\Lambda$  $(\rho > 0)$
has a  radially holomorphic primitive $G = g + I \hat{g}$ taking the
form of an indefinite integral
$$
 G = \int\limits_{x^0}^{x} F(x) dx + G^0,
$$
 where $G^0 = g^0 + I \hat{g}^0;$ $ \ g^0 = g(x_0^0, \rho^0),$ $ \ \hat{g}^0= \hat{g}(x_0^0, \rho^0)$.
 Functions $g = g(x_0, \rho)$, $\hat{g} = \hat{g}(x_0, \rho)$ are given by formulas:
$$
 g(x_0, \rho) = \int\limits_{x^0}^{x} (u_0 dx_0 - u_{\rho} d{\rho}) + g^0, \ \ \ \ \
 \hat{g}(x_0, \rho) = \int\limits_{x^0}^{x} (u_{\rho} dx_0 + u_0 d{\rho}) + \hat{g}^0.
$$
  \end{prop}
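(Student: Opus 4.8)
The plan is to mimic, in the radially holomorphic calculus of Section~6, the classical construction of a holomorphic primitive: first use the Cauchy--Riemann type system to show that the line integral defining $G$ is path independent, then define the scalar components $g,\hat g$ by that integral and verify by direct differentiation that $G=g+I\hat g$ is radially holomorphic and that $G'=F$.

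First I would record that, by hypothesis, $F=u_0+Iu_{\rho}$ is radially holomorphic in $\Lambda$, so $\overline{\partial}_{rad}F=0$; by the very definition of $\overline{\partial}_{rad}$ this is precisely the statement that the real-valued components $u_0,u_{\rho}$ satisfy the system $(\ref{CR-merid})$ in the meridian half-plane $(x_0,\rho)$. Consequently the two real differential forms $u_0\,dx_0-u_{\rho}\,d\rho$ and $u_{\rho}\,dx_0+u_0\,d\rho$ are closed --- the first because $\partial u_0/\partial\rho=-\partial u_{\rho}/\partial x_0$, the second because $\partial u_{\rho}/\partial\rho=\partial u_0/\partial x_0$. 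The Lemma on path independent line integrals now applies (its hypotheses being exactly $(\ref{CR-merid})$), so $\int_{x^0}^{x}F(x)\,dx$ does not depend on the curve $(l)$ joining $x^0$ to $x$ in the (simply connected) domain $\Lambda$, and it coincides with the parametric integral $\int_{\xi^0}^{\xi}F[x(\varsigma)]\,x'(\varsigma)\,d\varsigma$ of $(\ref{indefinite integral})$.

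Next I would set $g(x_0,\rho):=\int_{x^0}^{x}(u_0\,dx_0-u_{\rho}\,d\rho)+g^0$ and $\hat g(x_0,\rho):=\int_{x^0}^{x}(u_{\rho}\,dx_0+u_0\,d\rho)+\hat g^0$, so that by construction $G=g+I\hat g=\int_{x^0}^{x}F(x)\,dx+G^0$. Path independence makes $g$ and $\hat g$ well-defined single-valued $C^1$-functions on $\Lambda$, and the fundamental theorem for line integrals of exact forms yields the four identities $\partial g/\partial x_0=u_0$, $\partial g/\partial\rho=-u_{\rho}$, $\partial\hat g/\partial x_0=u_{\rho}$, $\partial\hat g/\partial\rho=u_0$. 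From here the verification is purely algebraic (treating $I$ as a fixed unit with $I^2=-1$, as in the definition of $\partial_{rad}$): $\overline{\partial}_{rad}G=\tfrac12[(\partial g/\partial x_0-\partial\hat g/\partial\rho)+I(\partial g/\partial\rho+\partial\hat g/\partial x_0)]=\tfrac12[(u_0-u_0)+I(-u_{\rho}+u_{\rho})]=0$, so $G$ is radially holomorphic in $\Lambda$; and $G'=\partial_{rad}G=\tfrac12[(\partial g/\partial x_0+\partial\hat g/\partial\rho)+I(\partial\hat g/\partial x_0-\partial g/\partial\rho)]=\tfrac12[(u_0+u_0)+I(u_{\rho}+u_{\rho})]=u_0+Iu_{\rho}=F$, so $G$ is a radially holomorphic primitive of $F$. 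The explicit formulas claimed for $g$ and $\hat g$ are then exactly the scalar components just displayed.

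The routine computations (closedness of the two $1$-forms, differentiation of the line integrals, the bookkeeping with $I^2=-1$) are standard; the only point that genuinely needs care is the single-valuedness of $g$ and $\hat g$, i.e.\ invoking simple connectedness of $\Lambda$ so that the closed forms are exact. I expect this to be the main obstacle, and it is dispatched by the Lemma. No extra smoothness hypothesis is required: in the case $\alpha=1$ the equation $(\ref{Euler-Poisson-Darboux equation-meridional})$ reduces to the Laplace equation in $(x_0,\rho)$, so $g$ and $\hat g$ are automatically harmonic, hence real-analytic, in the meridian half-plane.
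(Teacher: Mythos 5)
Your proposal is correct and follows essentially the same route as the paper, which derives the Proposition directly from the Lemma on path independent line integrals and the definition of the indefinite integral; you simply make explicit the verification that $\overline{\partial}_{rad}G=0$ and $G'=F$, which the paper leaves implicit. The identities $\partial g/\partial x_0=u_0$, $\partial g/\partial\rho=-u_{\rho}$, $\partial\hat g/\partial x_0=u_{\rho}$, $\partial\hat g/\partial\rho=u_0$ and the resulting algebra with $I^2=-1$ all check out against the system $(\ref{CR-merid})$ and the definition of $\partial_{rad}$.
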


 \begin{rem}
Real-valued line integrals along the curve $(l)$
 in multiply connected open domains $\Omega\subset \mathbb R^3$ $(\rho > 0)$
$$
  \int\limits_{(l)}^{} (u_0 dx_0 - u_{\rho} d{\rho}), \ \ \ \ \
  \int\limits_{(l)}^{} (u_{\rho} dx_0 + u_0 d{\rho})
$$
 may  provide the multi-valued scalar potential $g = g(x_0, \rho)$ and the multi-valued Stokes stream function $\hat{g} = \hat{g}(x_0, \rho)$  in the context of GASPT
using the cyclic constants in the meridian half-plane  (see, e.g., \cite{Weinstein:1948-flows, Weinstein:1948-int,LavSh}).
 \end{rem}

 Numerous mathematical problems of two-dimensional potential fields in homogeneous media
 have been investigated by means of the complex potential and conformal
mappings of the second kind.
  In accordance  with the theory of holomorphic functions of a complex variable,
  where $f = f(z) = u + iv$, $z = x + iy$,
analytic models in electrostatics are characterized by the principal
invariants of the form $\ I_{\mathbf{J}(\vec E)} = \mathrm{tr}
\mathbf{J}(\vec E) = 0,$ $\ II_{\mathbf{J}(\vec E)} =
\det\mathbf{J}(\vec E) = - |f'(z)|^2 \leq 0$ (see e.g.,
\cite{LavSh}).

Let us now look at properties of the EFG tensor $\mathbf{J}(\vec E)$
in cylindrically layered media, where $\phi( \rho) = \rho^{-1}$,
taking into account that the system~\eqref{Bryukhov-elec-meridional}
 is expressed as
\begin{equation}
\left\{
      \begin{array}{l}
        \frac{\partial{E_0}}{\partial{x_0}} + \frac{\partial{E_{\rho}}}{\partial{\rho}}  = 0,  \\
      \frac{\partial{E_0}}{\partial{\rho}} = \frac{\partial{E_{\rho}}}{\partial{x_0}}.
     \end{array}
  \right.
\label{Bryukhov-elec}
\end{equation}

The principal invariants of the EFG tensor $(\ref{EFG
tensor-merid})$  in case $\alpha =1$
\begin{equation}
\left(
\begin{array}{lll}
 -\frac{\partial{E_{\rho}}}{\partial{\rho}} &  \frac{\partial{E_{\rho}}}{\partial{x_0}} \frac{x_1}{\rho} &
 \frac{\partial{E_{\rho}}}{\partial{x_0}} \frac{x_2}{\rho} \\
\frac{\partial{E_{\rho}}}{\partial{x_0}} \frac{x_1}{\rho}  & \left(
\frac{\partial{E_{\rho}}}{\partial{\rho}} \frac{x_1^2}{\rho^2}  +
\frac{E_{\rho}}{\rho} \frac{x_2^2}{\rho^2}\right)  &
 \left( \frac{\partial{E_{\rho}}}{\partial{\rho}}- \frac{E_{\rho}}{\rho}\right)  \frac{x_1 x_2}{\rho^2}  \\
\frac{\partial{E_{\rho}}}{\partial{x_0}} \frac{x_2}{\rho}  & \left(
\frac{\partial{E_{\rho}}}{\partial{\rho}}-
\frac{E_{\rho}}{\rho}\right)  \frac{x_1 x_2}{\rho^2}  & \left(
\frac{\partial{E_{\rho}}}{\partial{\rho}} \frac{x_2^2}{\rho^2} +
\frac{E_{\rho}}{\rho} \frac{x_1^2}{\rho^2}\right)
 \end{array}
\right)
\label{EFG tensor-merid-1}
\end{equation}
 are written as
$$
 I_{\mathbf{J}(\vec E)} = \frac{E_{\rho}}{\rho},
 \ \ \ \ \ II_{\mathbf{J}(\vec E)} = - \left[  \left(
\frac{\partial{E_\rho}}{\partial{x_0}}  \right)^2 +  \left(
\frac{\partial{E_{\rho}}}{\partial{\rho}} \right)^2 \right],
$$
$$
 III_{\mathbf{J}(\vec E)}
 =  - \frac{E_{\rho}}{\rho} \left[  \left( \frac{\partial{E_\rho}}{\partial{x_0}}  \right)^2
 +  \left( \frac{\partial{E_{\rho}}}{\partial{\rho}} \right)^2 \right].
$$

The second principal invariant of the $EFG$ tensor $(\ref{EFG
tensor-merid-1})$ satisfies the inequality $II_{\mathbf{J}(\vec E)}
\leq 0$. The third principal invariant of the $EFG$ tensor
$(\ref{EFG tensor-merid-1})$ satisfies the inequality $\
III_{\mathbf{J}(\vec E)} < 0$ if and only if $E_{\rho}
> 0$.

 \begin{cor}
 Roots of the characteristic equation of the $EFG$ tensor  $(\ref{EFG tensor-merid-1})$
 in case $\alpha =1$ are  given by formulas:
\begin{equation}
 \lambda_{0}  = \frac{E_{\rho}}{\rho} = \mathrm{div} \, \vec E, \quad
 \quad \lambda_{1, 2} = \pm  \sqrt{\left(
\frac{\partial{E_{0}}}{\partial{x_0}}\right)^2 + \left(
\frac{\partial{E_{\rho}}}{\partial{x_0}} \right)^2} =  \pm |F'|.
\label{lambda-merid-1}
\end{equation}
 \end{cor}

Exact formulas~\eqref{lambda-merid-1} allow us to demonstrate
explicitly the geometric specifics of the $EFG$ tensor~\eqref{EFG
tensor-merid-1} in some cylindrically layered media, in contrast to
the geometric specifics of conformal mappings of the second kind in
homogeneous media.
 These formulas have been missed in applications of
  pseudoanalytic function theory \cite{Krav:2009,KhmKravOv:2010},
   modified quaternionic analysis in $\mathbb R^3$ (see, e.g., \cite{Leut:CV20,Leut:More95,Leut:Rud96}),
  the theory of Cullen regular (slice regular) functions (see, e.g.,
  \cite{GentStruppa:2006,ColSabStruppa:2009,ColSabStruppa:2016}) and
  the theory of holomorphic functions in $n$-dimensional space \cite{GuHaSp:2008}.

 \begin{defn}
 Radially holomorphic primitive $G = g + I \hat{g}$ in simply connected open domains $\Lambda$ $ ( \rho > 0)$
 in the context of the system $(\ref{Bryukhov-elec})$ is called the radially holomorphic potential.
 \end{defn}

\begin{ex}
The reduced quaternionic M\"{o}bius transformation with real coefficients:
  $F(x) = (ax + b)(cx + d)^{-1} = -\frac{1}{c^2}(x + \frac{d}{c})^{-1} + \frac{a}{c}$, where  $ \ ad - bc = 1$;
 $ a,b,c,d \in \mathbb R$  (see, e.g., \cite {Ahlfors:1981, Leut:CV20, BrW:2011}). \\
We deal with a radially anti-holomorphic function
$ \overline{F}(x) = -\frac{1}{c^2}(\overline{x} + \frac{d}{c})^{-1} + \frac{a}{c}$.

The radially holomorphic potential in simply connected open domains $\Lambda \subset \mathbb R^3$ $( \rho > 0)$
takes the form  $ \ G = -\frac{1}{c^2} \ln{(x + \frac{d}{c})} + \frac{a}{c}x$.

We get a meridional electrostatic field, generalizing linear superposition of plane single sink of intensity $N =-\frac{1}{c^2}\ $ and constant electrostatic field $ \frac{a}{c}$,
where $ \ E_0 = - \frac{1}{c^2} \frac{(x_0 + \frac{d}{c})}{[(x_0 + \frac{d}{c})^2 +\rho^2]} + \frac{a}{c}$,  $ \ E_{\rho} = - \frac{1}{c^2} \frac{\rho}{[(x_0 + \frac{d}{c})^2 + \rho^2]}$.

 The $EFG$ tensor $\mathbf{J}(\vec E)$ is  written as
\begin{equation}
  \frac{1}{c^2}
 \left(
  \begin{array}{lll}
 \frac{(x_0+ \frac{d}{c})^2-x_1^2-x_2^2}{[(x_0+ \frac{d}{c})^2+x_1^2 + x_2^2]^2} & \frac{2(x_0 + \frac{d}{c})x_1}{[(x_0 + \frac{d}{c})^2 + x_1^2 + x_2^2]^2} & \frac{2(x_0 + \frac{d}{c})x_2}{[(x_0 + \frac{d}{c})^2 + x_1^2 + x_2^2]^2} \\
 \frac{2(x_0 + \frac{d}{c})x_1}{[(x_0 + \frac{d}{c})^2 + x_1^2 + x_2^2]^2}  & \frac{-(x_0+ \frac{d}{c})^2+x_1^2-x_2^2}{[(x_0 + \frac{d}{c})^2 + x_1^2 + x_2^2]^2}  &    \frac{-2x_1 x_2}{[(x_0 + \frac{d}{c})^2 + x_1^2 + x_2^2]^2} \\
 \frac{2(x_0 + \frac{d}{c})x_2}{[(x_0 + \frac{d}{c})^2 + x_1^2 + x_2^2]^2}  & \frac{-2x_1 x_2}{[(x_0 + \frac{d}{c})^2 + x_1^2 + x_2^2]^2}  &   \frac{-(x_0+ \frac{d}{c})^2-x_1^2+x_2^2}{[(x_0 + \frac{d}{c})^2 + x_1^2 + x_2^2]^2}
 \end{array}
 \right)
\label{Moebius}
\end{equation}

Roots of the characteristic equation  $(\ref{characteristic lambda})$  of the $EFG$ tensor  $(\ref{Moebius})$ are given by formulas:
 $$
 \lambda_{0} = \frac{-1}{c^2 [(x_0 + \frac{d}{c})^2 +\rho^2]}, \ \ \ \ \
\lambda_{1, 2} = \pm \frac{ \sqrt{ (x_0 + \frac{d}{c})^4 + \rho^4} }{c^2 [(x_0 + \frac{d}{c})^2 + \rho^2]}.
 $$
Thus, the set of degenerate points of the $EFG$ tensor
$(\ref{Moebius})$ is empty.
 \end{ex}

\begin{ex}
The reduced quaternionic cubic polynomial with real coefficients:
$\ F(x) = a_3 x^3 + a_1 x$;   $ \ a_3, a_1 \in \mathbb R$. \\
We deal with a radially anti-holomorphic function $ \ \overline{F}(x) = a_3 \overline{x}^3 + a_1 \overline{x}. $

The radially holomorphic potential in simply connected open domains $\Lambda\subset \mathbb R^3$  $(\rho > 0)$
takes the form $G = \frac{a_3}{4} x^4 + \frac{a_1}{2} x^2$.

We get a meridional electrostatic field, where  $ \ \vec E = \vec
E^1 + \vec E^2$,
 $$
\vec E^1 =  \left( E_0^1, \frac{x_1}{\rho} E_{\rho}^1, \frac{x_2}{\rho} E_{\rho}^1 \right),  \ \ \ \ \  \vec E^2 =  \left( E_0^2, \frac{x_1}{\rho} E_{\rho}^2, \frac{x_2}{\rho} E_{\rho}^2 \right),
$$
 $$
E_0^1 = a_3 (x_0^2 - 3 x_1^2 - 3 x_2^2) x_0,  \ \ \ \ \  E_{\rho}^1 = a_3 (-3 x_0^2 + x_1^2 +  x_2^2) \rho,
 $$
 $$
E_0^2 = a_1 x_0,  \ \ \ \ \  E_{\rho}^2 = - a_1 \rho .
 $$

 The $EFG$ tensors $\mathbf{J}(\vec E^1)$ and $\mathbf{J}(\vec E^2)$ are  written as
 $$
 a_3
\left(
\begin{array}{lll}
  (3x_0^2 - 3 x_1^2 - 3 x_2^2)  &   -6x_0 x_1    &  -6x_0 x_2 \\
  -6 x_0 x_1 &  (-3 x_0^2 + 3x_1^2 + x_2^2)   & 2x_1 x_2         \\
  -6 x_0 x_2  &  2x_1 x_2      &   (-3 x_0^2 +  x_1^2 + 3x_2^2)
 \end{array}
\right),
 $$
 $$
  a_1
 \left(
  \begin{array}{lll}
  1  &   0 &   0 \\
  0  &  - 1   & 0 \\
  0  & 0  &  - 1
 \end{array}
 \right).
 $$

The zero divergence condition leads to the well-known quadratic algebraic equation:
 \begin{equation}
\mathrm{div} \, \vec E = \frac{E_{\rho}}{\rho} = -3 a_3 x_0^2 + a_3
(x_1^2 + x_2^2) - a_1 = 0.
  \label{eq: non-degenerate quadric surfaces}
 \end{equation}

If $ \ a_1 \ne 0$, the Eq. (\ref{eq: non-degenerate quadric
surfaces}) provides two types of non-degenerate quadric surfaces  of
revolution in $\mathbb R^3=\{(x_0,x_1,x_2)\}$ (with the axis of
revolution $x_0$). If the signs of the coefficients $a_1$ and $a_3$
are the same,  we deal with a one-sheeted circular hyperboloid as a
surface of negative Gaussian curvature. If the signs of the
coefficients $a_1$ and $a_3$ are opposite, we deal with a
two-sheeted circular hyperboloid
 as a surface of positive Gaussian curvature (see, e.g., \cite{HilCV:1999}).

If $ \ a_1 = 0$,
we deal with a circular cone as a surface of zero Gaussian curvature:
$$
3 x_0^2 - (x_1^2 + x_2^2)  = 0.
$$

 Roots of the characteristic equation  $(\ref{characteristic lambda})$  of the $EFG$ tensor $\mathbf{J}(\vec E) = \mathbf{J}(\vec E^1) + \mathbf{J}(\vec E^2)$ are given by formulas:
 $$
 \lambda_{0} = -3 a_3 x_0^2 + a_3 \rho^2 - a_1, \ \ \ \ \
\lambda_{1, 2} = \pm  \sqrt{(3 a_3 x_0^2 - 3 a_3 \rho^2 + a_1)^2 + 36 a_3^2 x_0^2 \rho^2}.
 $$
 \end{ex}

 \begin{ex}
Linear superposition of the reduced quaternionic power functions with negative exponents:
$ \ F(x)= a_{-1} x^{-1}+ a_{-2} x^{-2}$;  $ \ a_{-1}, a_{-2} \in \mathbb R$. \\
We deal with a radially anti-holomorphic function $ \ \overline{F}(x) = a_{-1} \overline{x^{-1}} + a_{-2} \overline{x^{-2}}. $

The radially holomorphic potential in simply connected open domains $\Lambda\subset \mathbb R^3$  $(\rho > 0)$
takes the form $G = a_{-1} \ln{x} - a_{-2} x^{-1}$.

We get a meridional electrostatic field, where  $ \ \vec E = \vec
E^1 + \vec E^2$,
 $$
\vec E^1 =  \left( E_0^1, \frac{x_1}{\rho} E_{\rho}^1, \frac{x_2}{\rho} E_{\rho}^1 \right),  \ \ \ \ \  \vec E^2 =  \left( E_0^2, \frac{x_1}{\rho} E_{\rho}^2, \frac{x_2}{\rho} E_{\rho}^2 \right),
$$
 $$
E_0^1 = \frac { a_{-1}x_0}{x_0^2 + \rho^2},  \ \ \ \ \  E_{\rho}^1 = \frac{ a_{-1} \rho}{x_0^2 +\rho^2},
 $$
 $$
E_0^2 =  \frac{a_{-2}(x_0^2 - \rho^2)}{(x_0^2 + \rho^2)^2},  \ \ \ \ \  E_{\rho}^2 = \frac{2a_{-2} x_0 \rho}{(x_0^2 + \rho^2)^2}.
 $$

 The $EFG$ tensors $\mathbf{J}(\vec E^1)$ and $\mathbf{J}(\vec E^2)$ are  written as
$$
a_{-1}
 \left(
  \begin{array}{lll}
 \frac{-x_0^2 + x_1^2 + x_2^2}{ (x_0^2 + x_1^2 + x_2^2)^2} &   \frac{-2x_0 x_1}{ (x_0^2 + x_1^2 + x_2^2)^2} &    \frac{-2x_0 x_2}{(x_0^2 + x_1^2 + x_2^2)^2} \\
 \frac{-2x_0 x_1}{(x_0^2 + x_1^2 + x_2^2)^2}  &  \frac{x_0^2 - x_1^2 + x_2^2}{ (x_0^2 + x_1^2 + x_2^2)^2}  &    \frac{-2x_1 x_2}{ (x_0^2 + x_1^2 + x_2^2)^2} \\
 \frac{-2x_0 x_2}{ (x_0^2 + x_1^2 + x_2^2)^2}  &   \frac{-2x_1 x_2}{ (x_0^2 + x_1^2 + x_2^2)^2}  &   \frac{x_0^2 + x_1^2 - x_2^2}{ (x_0^2 + x_1^2 + x_2^2)^2}
 \end{array}
 \right),
$$

$$
a_{-2}
 \left(
  \begin{array}{lll}
 \frac{2  x_0(-x_0^2 + 3x_1^2 + 3x_2^2)}{ (x_0^2 + x_1^2 + x_2^2)^3} &   \frac{-4 x_0^2x_1}{ (x_0^2 + x_1^2 + x_2^2)^3} &     \frac{-4 x_0^2x_2}{ (x_0^2 + x_1^2 + x_2^2)^3}  \\
  \frac{-4 x_0^2x_1}{ (x_0^2 + x_1^2 + x_2^2)^3}   &   \frac{2 x_0(x_0^2 - 3x_1^2 + x_2^2)}{ (x_0^2 + x_1^2 + x_2^2)^3}  &    \frac{-4 x_0x_1x_2}{ (x_0^2 + x_1^2 + x_2^2)^3} \\
 \frac{-4 x_0^2x_2}{ (x_0^2 + x_1^2 + x_2^2)^3} &   \frac{-4 x_0x_1x_2}{ (x_0^2 + x_1^2 + x_2^2)^3}  &   \frac{2 x_0(x_0^2 + x_1^2 - 3x_2^2)}{ (x_0^2 + x_1^2 + x_2^2)^3}
 \end{array}
 \right).
$$

The zero divergence condition
\begin{equation}
  \mathrm{div} \, \vec E = \frac{E_{\rho}}{\rho} = \frac{a_{-1}}{x_0^2 + x_1^2 + x_2^2} + \frac{2 a_{-2} x_0}{(x_0^2 + x_1^2 + x_2^2)^2} = 0
 \label{power zero divergence}
 \end{equation}
leads to an equation of a sphere of a radius $\frac{a_{-2}}{a_{-1}}$
 with center at the point $(-\frac{a_{-2}}{a_{-1}}, 0, 0)$:
$$
a_{-1}(x_0^2 + x_1^2 + x_2^2) + 2 a_{-2} x_0  = 0.
$$
 \end{ex}

 \begin{ex}
  Linear superposition of the reduced quaternionic exponential functions:
$\ F(x)= e^{-b_1 x} - e^{-b_2 x}$; $ \ b_1, b_2 \in \mathbb R$ $ \ (b_1, b_2 > 0).$ \\
We deal with a radially anti-holomorphic function  $\ \overline{F}(x) = \overline{e^{-b_1x}} + (-\overline{e^{-b_2x}}).$

The radially holomorphic potential in simply connected open domains $\Lambda\subset \mathbb R^3$  $(\rho > 0)$
 takes the form $G = - \frac{1}{b_1}e^{-b_1 x} + \frac{1}{b_2}e^{-b_2 x}$.

We get a meridional electrostatic field, where  $ \ \vec E = \vec
E^1 + \vec E^2$,
 $$
\vec E^1 =  \left( E_0^1, \frac{x_1}{\rho} E_{\rho}^1, \frac{x_2}{\rho} E_{\rho}^1 \right),  \ \ \ \ \  \vec E^2 =  \left( E_0^2, \frac{x_1}{\rho} E_{\rho}^2, \frac{x_2}{\rho} E_{\rho}^2 \right),
$$
 $$
E_0^1 =  e^{-b_1x_0} \cos (b_1 \rho),  \ \ \ \ \  E_{\rho}^1 = e^{-b_1x_0}  \sin (b_1 \rho),
 $$
 $$
E_0^2 =e^{-b_2x_0} \cos (b_2 \rho),  \ \ \ \ \  E_{\rho}^2 = e^{-b_2x_0}  \sin (b_2 \rho).
 $$

  The $EFG$ tensor $\mathbf{J}(\vec E^1)$ and the $EFG$ tensor $\mathbf{J}(\vec E^2)$ are  written as
$$
 \left(
  \begin{array}{lll}
-b_1 E_0^1 &   -\frac{b_1 x_1}{\rho} E_{\rho}^1  &    -\frac{b_1 x_2}{\rho} E_{\rho}^1 \\
 -\frac{b_1 x_1}{\rho} E_{\rho}^1  &  \left( \frac{b_1x_1^2}{ \rho^2} E_0^1 + \frac{x_2^2}{ \rho^3} E_{\rho}^1 \right) &   \left( \frac{b_1 x_1x_2}{\rho^2} E_0^1 -  \frac{x_1 x_2}{\rho^3} E_{\rho}^1 \right)  \\
 -\frac{b_1 x_2}{\rho} E_{\rho}^1 &   \left(  \frac{b_1 x_1x_2}{\rho^2} E_0^1 -  \frac{x_1 x_2}{\rho^3} E_{\rho}^1 \right)  &  \left( \frac{b_1x_2^2}{ \rho^2} E_0^1 + \frac{x_1^2}{ \rho^3} E_{\rho}^1 \right)
 \end{array}
 \right),
$$
$$
 \left(
  \begin{array}{lll}
b_2 E_0^2 &   \frac{b_2 x_1}{\rho} E_{\rho}^2  &    \frac{b_2 x_2}{\rho} E_{\rho}^2 \\
 \frac{b_2 x_1}{\rho} E_{\rho}^2  &  \left( -\frac{b_2x_1^2}{ \rho^2} E_0^2 - \frac{x_2^2}{ \rho^3} E_{\rho}^2 \right) &   \left( -\frac{b_2 x_1x_2}{\rho^2} E_0^2 + \frac{x_1 x_2}{\rho^3} E_{\rho}^2 \right)  \\
 \frac{b_2 x_2}{\rho} E_{\rho}^2  &   \left( -\frac{b_2 x_1x_2}{\rho^2} E_0^2 + \frac{x_1 x_2}{\rho^3} E_{\rho}^2 \right)  &  \left( -\frac{b_2x_2^2}{ \rho^2} E_0^2 - \frac{x_1^2}{ \rho^3} E_{\rho}^2 \right)
 \end{array}
 \right).
$$

The zero divergence condition
\begin{equation}
  \mathrm{div} \, \vec E =  e^{-b_1 x_0} \frac{\sin (b_1 \rho)}{\rho}
 - e^{-b_2 x_0} \frac{\sin (b_2 \rho)}{\rho} = 0
$$
implies that
$$
 e^{(b_2-b_1)x_0}\sin (b_1 \rho) - \sin (b_2 \rho) = 0.
 \label{exp zero divergence}
\end{equation}

In particular,  the Eq. $(\ref{exp zero divergence})$ under condition of $ b_2 = 2 b_1$ leads to equation of circular cylinders of increasing radius:
$$
 \sin (b_1 \rho) = 0,  \ \ \ \ \ \rho = \frac{\pi m}{b_1}, \ \ m= +1, +2,\ldots ,
$$
 and to equations described by separable variables $x_0$, $\rho$:
$$
   \cos (b_1 \rho) = \frac{e^{b_1 x_0}}{2},  \ \ \ \ \ x_0 = \frac{\ln [ 2 \cos (b_1 \rho)]}{b_1}.
$$
 \end{ex}

 \begin{rem}
The reduced quaternionic integral transforms of real-valued
originals within Fueter's construction  in $\mathbb R^3$
belong to joint class of solutions of the system $(H)$ and the
system  $(A_3)$ with variable coefficients \cite{Br:2003}.
 Their applications in different domains of mathematical physics were
explicitly demonstrated in 2011 \cite{BrW:2011}.
  \end{rem}

   \begin{defn}
 A real-valued function $\tilde{\eta} = \tilde{\eta}(\tau)$ of a real variable $\tau$
  is called an original real-valued function if
  \begin{enumerate}
  \item  the function $\tilde{\eta}(\tau)$ satisfies the H\"{o}lder's condition for each
    $\tau$ except points $\tau = \tau^1_{\tilde{\eta}},\tau^2_{\tilde{\eta}},\ldots$
  (there exists a finite quantity or zeros of such points for each finite interval), where
     $\tilde{\eta}(\tau)$ has gaps of the first kind,
  \item  for any $\tau<0$ $\tilde{\eta}(\tau) = 0$,
  \item for
  any $\tau >0$ there exist constants $B_{\tilde{\eta}}>0, \alpha_{\tilde{\eta}}\geq0$:
   $|\tilde{\eta}(\tau)| < B_{\tilde{\eta}} e^{\alpha_{\tilde{\eta}}\tau}$ .
  \end{enumerate}

 The H\"{o}lder's condition  for $ \tilde{\eta}(\tau)$ takes the following form: \\
 there exist constants $\ A_{\tilde{\eta}}>0,\
0<\lambda_{\tilde{\eta}}\leq1, \ \delta_{\tilde{\eta}}>0$ such that  \\
for each $\tau$ and $\delta$ $ \
|\tilde{\eta}(\tau+\delta)-\tilde{\eta}(\tau)| \leq
A_{\tilde{\eta}}|\delta|^{\lambda_{\tilde{\eta}}}$,
 where $|\delta|\leq\delta_{\tilde{\eta}}$.
\end{defn}

 \begin{defn}
Suppose that  the transform kernel takes the form $e^{-x \tau}$ and
$\rho > 0$. The reduced quaternionic integral transform of an
original real-valued function  $ \tilde{\eta}(\tau)$
$$
 F(x) := \mathfrak{LF} \{ \tilde{\eta}(\tau) ; x \}
 = \int_{0}^{\infty} \tilde{\eta}(\tau) e^{-x \tau} d\tau
$$
is called the one-sided  reduced quaternionic Laplace-Fueter transform of $ \tilde{\eta}(\tau)$.
 \end{defn}

 In the context of applications of the radially holomorphic potential
  we have to deal with radially anti-holomorphic functions
$$
  \overline{F}(x) = \int_{0}^{\infty} \tilde{\eta}(\tau) \overline{e^{-x \tau}} d\tau
= \int_{0}^{\infty} \tilde{\eta}(\tau) e^{-x_0\tau}[\cos(\rho\tau) +
I \sin(\rho\tau)]d\tau.
$$

Meridional models provided by the one-sided reduced quaternionic
Laplace-Fueter transform are given by relations:
$$
E_0 = \int_{0}^{\infty} \tilde{\eta}(\tau)
e^{-x_0\tau}\cos(\rho\tau) d\tau, \ \ \ \ \ E_{\rho} =
\int_{0}^{\infty} \tilde{\eta}(\tau) e^{-x_0\tau} \sin(\rho\tau)
d\tau.
$$

The zero divergence condition leads to a wide range of integral
equations depending on $ \tilde{\eta}(\tau)$:
\begin{equation}
   \int\limits_{0}^{\infty} \tilde{\eta}(\tau) e^{-x_0 \tau} \sin (\rho \tau) d\tau = 0.
\label{zero-div-one-Laplace}
 \end{equation}

The two-sided reduced quaternionic Laplace-Fueter transform may be
introduced, if values of original real-valued functions $\
\tilde{\eta} = \tilde{\eta}(\tau)$ do not vanish  identically for $
\tau < 0$
 (in complex analysis see details, e.g., \cite{PolBr}).

 \begin{defn}
Suppose that  the transform kernel takes the form $e^{-x \tau}$ and
$\rho > 0$. The reduced quaternionic integral transform of an
original real-valued function $ \tilde{\eta}(\tau)$ whose values do
not vanish identically for $ \tau < 0$
$$
 F(x) := \mathfrak{LF}_{-\infty}^{+\infty} \{ \tilde{\eta}(\tau) ; x \}
  = \int_{-\infty}^{\infty}\tilde{\eta}(\tau) e^{-x\tau} d\tau
$$
is called the two-sided reduced quaternionic Laplace-Fueter
transform of $ \tilde{\eta}(\tau)$.
 \end{defn}

 \begin{rem}
Euler's Gamma function of the reduced quaternionic argument
$\Gamma(x)$, where $ x_0>0$, was first introduced by the author in
2003 \cite{Br:2003}:
$$
  \Gamma(-x) := \mathfrak{LF}_{-\infty}^{+\infty} \{ e^{-e^{\tau}} ; x \}
= \int_{-\infty}^{\infty} e^{-e^{\tau}} e^{-x_0\tau}[\cos(\rho\tau)
- I \sin(\rho\tau)] d\tau.
$$
 \end{rem}

Meridional model provided by Euler's Gamma function of the reduced
quaternionic argument is given by relations:
$$
E_0  = \int_{-\infty}^{\infty} e^{-e^{\tau}}
e^{-x_0\tau}\cos(\rho\tau) d\tau, \ \ \ \ \ E_{\rho} =
\int_{-\infty}^{\infty} e^{-e^{\tau}} e^{-x_0\tau} \sin(\rho\tau)
d\tau.
$$

The zero divergence condition implies that
\begin{equation}
   \int\limits_{-\infty}^{\infty} e^{-e^{\tau}} e^{-x_0 \tau} \sin (\rho \tau) d\tau = 0.
\label{zero-div-Gamma}
 \end{equation}

 \begin{defn}
Suppose that  the transform kernel takes the form $\cos(x\tau)$ and
$\rho > 0$. The reduced quaternionic integral transform of an
original real-valued function $ \tilde{\eta}(\tau)$
$$
 F(x) := \mathfrak{FF}c \{ \tilde{\eta}(\tau) ; x \}
 = \int_{0}^{\infty} \tilde{\eta}(\tau) \cos(x\tau) d\tau = \frac{1}{2} \int_{0}^{\infty} \tilde{\eta}(\tau) ( e^{-Ix\tau} + e^{Ix\tau}) d\tau
$$
is called the reduced quaternionic Fourier-Fueter cosine transform of $ \tilde{\eta}(\tau)$.
 \end{defn}

Meridional models provided by the reduced quaternionic
Fourier-Fueter cosine transform are given by relations:
$$
E_0
 = \int_{0}^{\infty} \tilde{\eta}(\tau) \cosh(\rho\tau)\cos(x_0\tau)d\tau; \ \ \
E_{\rho}
 = \int_{0}^{\infty} \tilde{\eta}(\tau) \sinh(\rho\tau)\sin(x_0\tau)d\tau.
$$

The zero divergence condition leads to a wide range of integral
equations depending on $ \tilde{\eta}(\tau)$:
\begin{equation}
  \int_{0}^{\infty} \tilde{\eta}(\tau) \sinh(\rho\tau)\sin(x_0\tau)d\tau = 0.
\label{zero-div-cos-Fourier}
 \end{equation}

 \begin{rem}
Consider the following independent reduced quaternionic variable: $
\ y = Ix = -\rho + Ix_0$. The reduced quaternionic Fourier-Fueter
cosine transform of $ \tilde{\eta}(\tau)$ may be equivalently
represented by means of the one-sided reduced quaternionic
Laplace-Fueter transform:
$$
 \mathfrak{FF}c \{ \tilde{\eta}(\tau) ; x \}
 = \frac{1}{2} [ \mathfrak{LF} \{ \tilde{\eta}(\tau) ; y \} + \mathfrak{LF} \{ \tilde{\eta}(\tau) ; -y \}].
$$
 \end{rem}

 \begin{defn}
Suppose that  the transform kernel takes the form $\sin(x\tau)$ and
$\rho > 0$. The reduced quaternionic integral transform of an
original real-valued function $ \tilde{\eta}(\tau)$
$$
 F(x) := \mathfrak{FF}s \{ \tilde{\eta}(\tau) ; x \}
  = \int_{0}^{\infty} \tilde{\eta}(\tau) \sin(x\tau) d\tau = \frac{I}{2} \int_{0}^{\infty} \tilde{\eta}(\tau) ( e^{-Ix\tau} - e^{Ix\tau}) d\tau
$$
is called the reduced quaternionic Fourier-Fueter sine transform of $ \tilde{\eta}(\tau)$.
 \end{defn}

Meridional models provided by the reduced quaternionic
Fourier-Fueter sine transform are given by relations:
$$
E_0 = \int_{0}^{\infty} \tilde{\eta}(\tau)
\cosh(\rho\tau)\sin(x_0\tau)d\tau; \ \ \ E_{\rho} = -
\int_{0}^{\infty} \tilde{\eta}(\tau)
\sinh(\rho\tau)\cos(x_0\tau)d\tau.
$$

The zero divergence condition leads to a wide range of integral
equations depending on $ \tilde{\eta}(\tau)$:
\begin{equation}
  \int_{0}^{\infty} \tilde{\eta}(\tau) \sinh(\rho\tau)\cos(x_0\tau)d\tau = 0.
\label{zero-div-sin-Fourier}
 \end{equation}

 \begin{rem}
 The reduced quaternionic Fourier-Fueter sine transform of $ \tilde{\eta}(\tau)$ may be equivalently represented by
means of the one-sided reduced quaternionic Laplace-Fueter
transform:
$$
 \mathfrak{FF}s \{ \tilde{\eta}(\tau) ; x \}
 = \frac{I}{2} [ \mathfrak{LF} \{ \tilde{\eta}(\tau) ; y \} - \mathfrak{LF} \{ \tilde{\eta}(\tau) ; -y \}].
$$
 \end{rem}

 Specific properties of the reduced quaternionic Fourier-Fueter cosine and sine
transforms of original real-valued functions allow us, in contrast
to the reduced quaternionic Laplace-Fueter transform, to establish
integral representations of Bessel functions of the first kind of
integer order $n$ and the reduced quaternionic argument $x$.

 Rudiments of function theory in $\mathbb R^3$ developed by Leutwiler
(see, e.g., \cite{Leut:CV20,Leut:More95,Leut:Rud96}) allow to extend
 Bessel functions of the first kind of integer order $n$ from a
disk of radius $r$: $D_r = \{ (x_0, x_1): x_0^2 + x_1^2 < r^2 \} $
to the ball of radius $r$: $B_r^3 = \{ (x_0, x_1, x_2): x_0^2 +
x_1^2 + x_2^2 < r^2 \} $ by its reduced quaternionic power series
expansion with real coefficients within Fueter's construction in
$\mathbb R^3$ $(\ref{Fueter})$ (see, e.g., \cite{Watson:1944} in the
complex plane):
$$
J_{n}(x) = \sum_{m = 0}^\infty \frac{(-1)^m }{m!(n+m)!} \left(
\frac{x}{2} \right)^{n+2m}.
$$

Chebyshev polynomials of the first kind of even degree allow us to
establish integral representations of Bessel functions of the first
kind of even integer order and the reduced quaternionic argument:
$$
\frac{\pi}{2} (-1)^n J_{2n}(x)
  = \mathfrak{FF}c \{ \tilde{\eta}(\tau) ; x \}
 = \int_{0}^{1} \frac{ \cos (2n \arccos \tau )}{\sqrt{1 - \tau^2}} \cos(x\tau) d\tau,
 $$
where $ \ \tilde{\eta}(\tau) = \frac{T_{2n}(\tau)}{\sqrt{1 -\tau^2}}
 = \frac{\cos (2n \arccos \tau )}{\sqrt{1 - \tau^2}}$
 (see, e.g., \cite{Szeg:1967,Suetin,BatEr} in the complex plane).


Chebyshev polynomials of the first kind of odd degree allow us to
establish integral representations of Bessel functions of the first
kind of odd integer order and the reduced quaternionic argument:
$$
\frac{\pi}{2} (-1)^n J_{2n+1}(x)
  = \mathfrak{FF}s \{ \tilde{\eta}(\tau) ; x \}
 = \int_{0}^{1} \frac{ \cos [(2n+1) \arccos \tau ]}{\sqrt{1 - \tau^2}} \sin(x\tau) d\tau,
 $$
where $ \ \tilde{\eta}(\tau) = \frac{T_{2n+1}(\tau)}{\sqrt{1
-\tau^2}}
 = \frac{\cos [(2n+1) \arccos \tau ]}{\sqrt{1 - \tau^2}}$.

\begin{ex}
Bessel function of the first kind of order zero and the reduced
quaternionic argument is expressed as $ J_{0}(x) = \sum_{m =
0}^\infty \frac{(-1)^m }{(m!)^2} \left( \frac{x}{2} \right)^{2m}.$

 Original $\ \tilde{\eta}(\tau) = \frac{T_{0}(\tau)}{\sqrt{1 - \tau^2}}$ implies
that integral representation of $J_{0}(x)$ is expressed as
$$
 J_{0}(x)
  = \frac{2}{\pi} \mathfrak{FF}c \{ \tilde{\eta}(\tau) ; x \}
 = \frac{2}{\pi} \int_{0}^{1} \frac{1}{\sqrt{1 - \tau^2}}\cos(x\tau)
 d\tau.
$$

Meridional model provided by the reduced quaternionic Fourier-Fueter
cosine transform of original $\ \tilde{\eta}(\tau)$ is given by
relations:
$$
E_0 = \int_{0}^{1} \frac{\cosh(\rho\tau)}{\sqrt{1 - \tau^2}}
\cos(x_0\tau)d\tau; \ \ \ E_{\rho} = \int_{0}^{1}
\frac{\sinh(\rho\tau)}{\sqrt{1 - \tau^2}} \sin(x_0\tau)d\tau.
$$

The zero divergence condition leads to the following integral
equation:
$$
  \int_{0}^{1}
\frac{\sinh(\rho\tau)}{\sqrt{1 - \tau^2}} \sin(x_0\tau)d\tau = 0.
$$
 \end{ex}


Problems of applications of special radially holomorphic functions
in inhomogeneous media in $\mathbb R^3$ have not been studied in the
context of the theory of holomorphic functions in the plane and
$n$-dimensional space \cite{GuHaSp:2008,GuHaSp:2016}. Applications
of the radially holomorphic potential in electrostatics allow us to
make up for the gap.

\section{ Meridional Fields in Homogeneous Media and Harmonic Meridional Mappings of the Second Kind}

 Geometric  properties of the $EFG$ tensor $(\ref{EFG tensor-merid})$
 within meridional fields in case $\alpha = 0$
\begin{equation}
\left(
\begin{array}{lll}
 \left( -\frac{\partial{E_{\rho}}}{\partial{\rho}} - \frac{E_{\rho}}{\rho} \right) &  \frac{\partial{E_{\rho}}}{\partial{x_0}} \frac{x_1}{\rho} &
 \frac{\partial{E_{\rho}}}{\partial{x_0}} \frac{x_2}{\rho} \\
\frac{\partial{E_{\rho}}}{\partial{x_0}} \frac{x_1}{\rho}  & \left(
\frac{\partial{E_{\rho}}}{\partial{\rho}} \frac{x_1^2}{\rho^2}  +
\frac{E_{\rho}}{\rho} \frac{x_2^2}{\rho^2}\right)  &
 \left( \frac{\partial{E_{\rho}}}{\partial{\rho}}- \frac{E_{\rho}}{\rho}\right)  \frac{x_1 x_2}{\rho^2}  \\
\frac{\partial{E_{\rho}}}{\partial{x_0}} \frac{x_2}{\rho}  & \left(
\frac{\partial{E_{\rho}}}{\partial{\rho}}-
\frac{E_{\rho}}{\rho}\right)  \frac{x_1 x_2}{\rho^2}  & \left(
\frac{\partial{E_{\rho}}}{\partial{\rho}} \frac{x_2^2}{\rho^2} +
\frac{E_{\rho}}{\rho} \frac{x_1^2}{\rho^2}\right)
 \end{array}
\right) \label{EFG tensor-merid-0}
\end{equation}
 have not been studied.

 On the other hand, open problems in three-dimensional harmonic mappings of simply connected domains in the context
  of the theory of potential solenoid velocity fields
$\vec V = (V_0, V_1, V_2)$, where
$$
  \left\{
\begin{array}{l}
  \mathrm{div}\ { \vec V} = 0, \\
  \mathrm{curl}{\ \vec V} = 0,
 \end{array}
\right.
$$
were pointed out by Lavrentyev and Shabat in 1973
\cite{LavSh-Hydro}. Properties of the Jacobian matrix $\mathbf{J_{l
m}}(\vec V) = \frac{\partial{V_l}}{\partial{x_m}} \ $ $ ( l, m =
0,1,2)$ are difficult to treat in the general setting in contrast to
properties of the Jacobian matrix $\mathbf{J_{l m}}(\vec V) =
\frac{\partial{V_l}}{\partial{x_m}} \ $ $ ( l, m = 0,1)$ into the
framework of the theory of functions of a complex variable
\cite{LavSh}.

 An original approach to building special classes of three-dimensional harmonic mappings was
developed by Mel'nichenko in 1975 \cite{Melnic:1975} by means of
functions taking values in commutative associative algebras of the
third rank. As noted by Mel'nichenko and Plaksa in 1997
\cite{MelnicPlaksa:1997}, "it is impossible to select a special
class of axially symmetric potentials (quite interesting for
possible applications) in the collection of harmonic functions
constructed in \cite{Melnic:1975}".
 Potential fields with axial symmetry are of particular interest
to hydrodynamics problems in the context of GASPT (see, e.g.,
\cite{Weinstein:1948-flows,
Weinstein:1948-int,Weinstein:1953,Plaksa:2001,Plaksa:2003,Plaksa:2019}).

Let us look at properties of the $EFG$ tensor $(\ref{EFG
tensor-merid-0})$ taking into account that the system
 in the meridian half-plane $(\rho >0)$ $(\ref{Bryukhov-elec-meridional})$ is
expressed as
$$
\left\{
      \begin{array}{l}
         \rho \left( \frac{\partial{E_0}}{\partial{x_0}} + \frac{\partial{E_{\rho}}}{\partial{\rho}} \right) + E_{\rho} = 0,  \\
      \frac{\partial{E_0}}{\partial{\rho}} =
      \frac{\partial{E_{\rho}}}{\partial{x_0}},
     \end{array}
  \right.
$$
where $ E_0 = \frac{\partial{g}}{\partial{x_0}}, \ E_{\rho} =
\frac{\partial{g}}{\partial{\rho}}$.
 In accordance with the generalized Stokes-Beltrami system
$(\ref{generalized Stokes-Beltrami})$, generalized axially symmetric
potential $g = g(x_0, \rho)$ and the Stokes stream function $\hat{g}
= \hat{g}(x_0, \rho)$ satisfy equations
  $$
  \rho \left( \frac{\partial{^2}{g}}{\partial{x_0}^2} +  \frac{\partial {^2}{g}}{\partial{\rho}^2} \right)
  + \frac{\partial{g}}{\partial{\rho}} = 0, \quad \quad
  \rho \left( \frac{\partial{^2}{\hat{g}}}{\partial{x_0}^2} +  \frac{\partial {^2}{\hat{g}}}{\partial{\rho}^2} \right)
   - \frac{\partial{\hat{g}}}{\partial{\rho}} = 0.
$$

 The characteristic equation of the $EFG$ tensor $(\ref{EFG tensor-merid-0})$
  is written as incomplete cubic equation
\begin{equation}
 \lambda^3 +  II_{\mathbf{J}(\vec E)} \lambda  - III_{\mathbf{J}(\vec E)} = 0,
\label{character lambda}
\end{equation}
where

$ II_{\mathbf{J}(\vec E)} =  - \left[ \left( \frac{\partial{E_\rho}}{\partial{x_0}}  \right)^2 +  \left( \frac{\partial{E_{\rho}}}{\partial{\rho}} \right)^2 \right]
 - \frac{E_{\rho}}{\rho} \frac{\partial{E_{\rho}}}{\partial{\rho}} - \left( \frac{E_{\rho}}{ \rho} \right)^2,$

$   III_{\mathbf{J}(\vec E)} = - \frac{E_{\rho}}{\rho} \left[  \left( \frac{\partial{E_\rho}}{\partial{x_0}}  \right)^2 +  \left( \frac{\partial{E_{\rho}}}{\partial{\rho}} \right)^2 \right]
 - \left( \frac{E_{\rho}}{ \rho} \right)^2 \frac{\partial{E_{\rho}}}{\partial{\rho}}$.

 \begin{cor}
 Roots of the characteristic equation $(\ref{character lambda})$ are  given by two independent formulas
\begin{equation}
 \lambda_{0} = \frac{E_{\rho}}{\rho}, \quad
\lambda_{1, 2} = - \frac{E_{\rho}}{2 \rho}  \pm  \sqrt{\left(
\frac{E_{\rho}}{2 \rho} \right)^2 + \frac{E_{\rho}}{\rho}
\frac{\partial{E_{\rho}}}{\partial{\rho}} + \left(
\frac{\partial{E_{\rho}}}{\partial{x_0}}\right)^2 + \left(
\frac{\partial{E_{\rho}}}{\partial{\rho}} \right)^2}.
\label{incomplete lambda}
\end{equation}
\end{cor}

Exact formulas (\ref{incomplete lambda}) demonstrate explicitly
geometric properties of the $EFG$ tensor (\ref{EFG tensor-merid-0})
within meridional fields in homogeneous media.

An important concept of $M$(onogenic)-conformal mappings  $u = u_0 +
iu_1 + ju_2: \Lambda \rightarrow \mathbb{R}^3$ was introduced by
Malonek in 2000 \cite{Mal:2000} in the context of quaternionic
analysis in $\mathbb R^3$.
 New geometric properties of $M$-conformal mappings have been characterized by G\"{u}rlebeck and Morais
by means of the reduced quaternion-valued monogenic functions with
non-vanishing Jacobian determinant (see, e.g., \cite{GM:2010}).
Applications of mappings $ \overline{u} = u_0 - iu_1 - ju_2: \Lambda
\rightarrow \mathbb{R}^3$ in mathematical physics have not been
studied.

 This leads to the following definition.
 \begin{defn}
Let $\Lambda \subset \mathbb R^3$ be a simply connected open domain,
where \\ $x_1 \neq 0, x_2 \neq 0$. Assume that an exact solution
$(u_0, u_1, u_2)$ of the system $(R)$ satisfies the following
condition: $x_2 u_1 = x_1 u_2$ in $\Lambda$. Mapping $u = u_0 + iu_1
+ ju_2: \Lambda \rightarrow \mathbb{R}^3$ is called harmonic
meridional mapping of the first kind, and mapping $ \overline{u} =
u_0 - iu_1 - ju_2: \Lambda \rightarrow \mathbb{R}^3$ is called
harmonic meridional mapping of the second kind, respectively.
 \end{defn}

The principal invariants of harmonic meridional mappings of the
second kind coincide with the principal invariants of the $EFG$
tensor $(\ref{EFG tensor-merid-0})$.

 \begin{cor}
Suppose that $u_{\rho} = \frac{u_1}{x_1}\rho = \frac{u_2}{x_2}\rho$
($x_1 \neq 0$, $x_2 \neq 0$).
  The set of degenerate points of harmonic meridional mappings of the second kind
$ \overline{u} = u_0 - iu_1 - ju_2: \Lambda \rightarrow
\mathbb{R}^3$ is provided by two independent equations:
$$
{u_{\rho}}=0,\quad
\left(\frac{\partial{u_{\rho}}}{\partial{x_0}}\right)^2
+\left(\frac{\partial{u_{\rho}}}{\partial{\rho}}\right)^2
+ \frac{u_{\rho}}{\rho}\frac{\partial{u_{\rho}}}{\partial{\rho}}=0.
 $$
 \end{cor}

\begin{ex}
Consider a generalized axially symmetric potential in case $\alpha
=0$ using Bessel function of the first kind of order zero: $g(x_0,
\rho) = e^{\breve{\beta} x_0} J_{0}( \breve{\beta} \rho)$, where $
\rho > 0$.

$ E_0 = \frac{\partial{g}}{\partial{x_0}} = \breve{\beta}
e^{\breve{\beta} x_0} J_{0}( \breve{\beta} \rho), \ \ \ \ \ E_{\rho}
= \frac{\partial{g}}{\partial{\rho}} = e^{\breve{\beta} x_0} J'_{0}(
\breve{\beta} \rho)$.

The electric field strength is represented as

 $\vec E = (E_0, \frac{x_1}{\rho} E_{\rho}, \frac{x_2}{\rho} E_{\rho})$ $ = e^{\breve{\beta} x_0}
\left( \breve{\beta} J_{0}( \breve{\beta} \rho), \frac{x_1}{\rho}
J'_{0}( \breve{\beta} \rho), \frac{x_2}{\rho} J'_{0}( \breve{\beta}
\rho) \right).$

$ \frac{\partial{E_{\rho}}}{\partial{x_0}} =
\breve{\beta}e^{\breve{\beta} x_0} J'_{0}( \breve{\beta} \rho), \ \
\ \ \ \frac{\partial{E_{\rho}}}{\partial{\rho}} = e^{\breve{\beta}
x_0} J''_{0}( \breve{\beta} \rho). $

 The $EFG$ tensor $(\ref{EFG tensor-merid-0})$  is written as
$$
\tiny{ e^{\breve{\beta} x_0}
 \left(
  \begin{array}{lll}
\left[ -J''_{0}( \breve{\beta} \rho) - J'_{0}( \breve{\beta} \rho)
\frac{1}{ \rho} \right] & J'_{0}( \breve{\beta} \rho)
\frac{\breve{\beta}x_1}{\rho} & J'_{0}( \breve{\beta} \rho)
\frac{\breve{\beta}x_2}{\rho} \\ J'_{0}( \breve{\beta} \rho)
\frac{\breve{\beta}x_1}{\rho}  & \left[ J''_{0}( \breve{\beta} \rho)
\frac{x_1^2}{\rho^2} + J'_{0}( \breve{\beta}
\rho)\frac{x_2^2}{\rho^3} \right]  & \left[ J''_{0}( \breve{\beta}
\rho) - J'_{0}( \breve{\beta} \rho) \frac{1}{ \rho} \right]
\frac{x_1 x_2}{\rho^2} \\ J'_{0}( \breve{\beta} \rho)
\frac{\breve{\beta}x_2}{\rho}  & \left[ J''_{0}( \breve{\beta} \rho)
- J'_{0}( \breve{\beta} \rho) \frac{1}{ \rho} \right] \frac{x_1
x_2}{\rho^2}  &
 \left[ J''_{0}( \breve{\beta} \rho) \frac{x_2^2}{\rho^2} + J'_{0}( \breve{\beta} \rho)\frac{x_1^2}{\rho^3} \right]
 \end{array}
 \right) }
$$
 Roots of the characteristic equation $(\ref{character lambda})$ are given by formulas \\
$ \lambda_{0} =  e^{\breve{\beta} x_0} \frac{J'_{0}( \breve{\beta} \rho)}{\rho},$ \\
$ \lambda_{1, 2} = e^{\breve{\beta} x_0} ( -\frac{J'_{0}(
\breve{\beta} \rho)}{2 \rho} $
 $ \pm \sqrt{ (\breve{\beta}^2 + \frac{1}{4 \rho^2}) [J'_{0}( \breve{\beta} \rho)]^2 +  [J''_{0}( \breve{\beta} \rho)]^2 +  \frac{1}{ \rho} J'_{0}( \breve{\beta} \rho)J''_{0}( \breve{\beta} \rho)} ) $.

The set of degenerate points of the $EFG$ tensor $(\ref{EFG
tensor-merid-0})$ is provided by two independent equations:

 $ J'_{0}( \breve{\beta} \rho) =0,$
 $  \quad \quad \breve{\beta}^2 [J'_{0}( \breve{\beta} \rho)]^2 +  [J''_{0}( \breve{\beta} \rho)]^2 +  \frac{1}{ \rho} J'_{0}( \breve{\beta} \rho)J''_{0}( \breve{\beta} \rho)  =0.$
 \end{ex}

\section {Concluding Remarks}

Numerous mathematical problems of three-dimensional potential fields
in inhomogeneous media may be investigated by means of the system
$(\ref{isotropic-electrostatic-Maxwell-system-3})$.
In particular, in the context of the theory of conduction of heat
(see, e.g., \cite {Carslaw,Landis}) the coefficient $\phi=
\phi(x_0,x_1,x_2)$ and the scalar potential $h= h(x_0,x_1,x_2)$  may
be interpreted as the thermal conductivity $\kappa = \kappa(x_0,
x_1, x_2)$ and the steady state temperature $T = T(x_0,x_1,x_2)$,
respectively.

On the other hand, $\alpha$-axial-hyperbolic non-Euclidean
modification $(\ref{eq:A_3^alpha-system})$ of the system $(R)$
 leads to a family of Vekua type systems in cylindrical coordinates $(\ref{A_3^alpha system-meridional})$
within meridional models of potential fields in special
cylindrically layered media, where $\phi( \rho) = \rho^{-\alpha}$,
$\alpha >0$.

 Properties of potential fields in inhomogeneous anisotropic media raise the next issues for consideration.
Would contemporary problems of potential fields be characterized
using a generalized Riemannian modification of the system $(R)$?

A rich variety of analytic models  may be studied in the context of
the static Maxwell system in three dimensional inhomogeneous
anisotropic media described by a symmetric tensor $\mathbf{\Phi} =
(\phi_{lm})$ with $C^1$-components
 $ \phi_{lm} = \phi_{lm}(x_0,x_1,x_2)$ $(l, m = 0,1,2)$ and positive eigenvalues  $\mu_l = \mu_l(x_0, x_1, x_2)$  $(l = 0,1,2)$:
\begin{equation}
  \left\{
\begin{array}{l}
  \mathrm{div}\ { \mathbf{\Phi} \vec E} = 0, \\
  \mathrm{curl}{\ \vec E} = 0.
 \end{array}
\right.
\label{anisotropic-electrostatic-Maxwell-system-3}
\end{equation}
The  vector  $\vec D := \mathbf{\Phi} \vec E = (\sum\limits_{m =0}^2 \phi_{0m}E_m, \sum\limits_{m =0}^2 \phi_{1m}E_m, \sum\limits_{m =0}^2 \phi_{2m}E_m)$
 is known as the electrostatic induction (see, e.g., \cite{Sivukhin:3,SvetGub}).

 The electrostatic potential $ h= h(x_0,x_1,x_2)$ in simply connected open domains $\Lambda \subset \mathbb R^3$, where $ \vec E = \mathrm{grad} \ h$,
allows us to reduce $C^1$-solutions of the system $(\ref{anisotropic-electrostatic-Maxwell-system-3})$
 to $C^2$-solutions of the continuity equation (see, e.g., \cite{Chew,Sivukhin:3,SvetGub,Landis}):
\begin{equation}
  \mathrm{div}( \mathbf{\Phi} \ \mathrm{grad}{\ h}) = \sum\limits_{l =0}^2 \frac{\partial{}}{\partial{x_l}} \left(\sum\limits_{m =0}^2 \phi_{lm}\frac{\partial{h}}{\partial{x_m}}\right) = 0.
\label{anisotropic-divergence-form}
\end{equation}

\begin{rem}
 The system $(\ref{anisotropic-electrostatic-Maxwell-system-3})$ in the context of mathematical theory of multidimensional first order elliptic systems
  was interpreted by Auscher and Ros\'{e}n in 2012 as the generalized Cauchy-Riemann system \cite{Auscher:II}.
 \end{rem}

 Meanwhile, general class of $C^1$-solutions of the system
$(\ref{anisotropic-electrostatic-Maxwell-system-3})$ may be
equivalently represented as class of $C^1$-solutions of the
following first order elliptic system:
 \begin{equation}
 \left\{
     \begin{array}{l}
   \frac{\partial{(\phi_{00}u_0 - \phi_{01}u_1 - \phi_{02}u_2)}}{\partial{x_0}} +
  \frac{\partial{(\phi_{10}u_0 - \phi_{11}u_1 - \phi_{12}u_2)}}{\partial{x_1}} +
  \frac{\partial{(\phi_{20}u_0 - \phi_{21}u_1 - \phi_{22}u_2)}}{\partial{x_2}} = 0,  \\
      \frac{\partial{u_0}}{\partial{x_1}}=-\frac{\partial{u_1}}{\partial{x_0}}, \ \ \ \ \
      \frac{\partial{u_0}}{\partial{x_2}}=-\frac{\partial{u_2}}{\partial{x_0}}, \\
      \frac{\partial{u_1}}{\partial{x_2}}=\frac{\partial{u_2}}{\partial{x_1}},
     \end{array}
  \right.
\label{Bryukhov-anisotropic-3}
\end{equation}
 where $\vec E := (u_0, -u_1, -u_2)$.

Let us consider the Riemannian metric
\begin{equation}
 ds^2  = \sum_{l =0}^2 \sum_{m =0}^2 \check{g}_{l m}(x_0,x_1,x_2)dx_ldx_m,
\label{anisotropic-Riemannian-3}
\end{equation}
 such that metric tensor is written as $ \mathbf{\check{G}} = (\check{g}_{lm})$, $\mathrm{det}\mathbf{\check{G}} \neq 0,$
while contravariant tensor  is written as $ \mathbf{\check{G}}^{-1} = (\check{g}^{lm})$.

The Beltrami's second differential parameter (see, e.g., \cite{Eisenhart:Riem,Ahlfors:1981})
of the electrostatic potential $ h= h(x_0,x_1,x_2)$ takes the following form:
$$
  \frac{1}{\sqrt{\mathrm{det}\mathbf{\check{G}}}} \sum\limits_{l =0}^2 \frac{\partial{}}{\partial{x_l}}
\left( \sum\limits_{m =0}^2 \sqrt{\mathrm{det}\mathbf{\check{G}}} \ \check{g}^{lm}\frac{\partial{h}}{\partial{x_m}}\right) =0.
$$

The symmetric tensor $\mathbf{\Phi} = (\phi_{lm})$ is explicitly
constructed into the framework of the system
$(\ref{anisotropic-electrostatic-Maxwell-system-3})$: $
 \mathbf{\Phi} = \sqrt{\mathrm{det}\mathbf{\check{G}}} \ \mathbf{\check{G}}^{-1}.
\label{anisotropic-tensor-3}
$

 The system $(\ref{Bryukhov-anisotropic-3})$
may be considered as a generalized Riemannian modification of the system $(R)$
with respect to the Riemannian metric $(\ref{anisotropic-Riemannian-3})$.

In particular, the static Maxwell system $(\ref{anisotropic-electrostatic-Maxwell-system-3})$
in anisotropic media described by coefficients
 $\phi _{00}(x_2) = x_2^{-\alpha_{00}}$, $\phi _{11}(x_2) =  x_2^{-\alpha_{11}}$, $\phi _{22}(x_2) =  x_2^{-\alpha_{22}}$  $ (x_2 > 0)$,
where $\alpha_{00}, \alpha_{11}, \alpha_{22}  \in \mathbb{R}$, $ \
\phi _{01} = \phi _{02} = \phi _{12} = 0$, is expressed as
\begin{equation}
  \left\{
\begin{array}{l}
  x_2^{-\alpha_{00}} \frac{\partial{E_0}}{\partial{x_0}} + x_2^{-\alpha_{11}} \frac{\partial{E_1}}{\partial{x_1}} + x_2^{-\alpha_{22}} \frac{\partial{E_2}}{\partial{x_2}} - \alpha_{22} x_2^{-\alpha_{22} -1} E_2 = 0,  \\
  \mathrm{curl}{\ \vec E} = 0,
 \end{array}
\right.
 \label{alpha_1.2-anisotropic-electrostatic-Maxwell-system}
\end{equation}
 and the system $(\ref{Bryukhov-anisotropic-3})$ is simplified:
\begin{equation}
 \left\{
    \begin{array}{l}
 x_2^{-\alpha_{00}} \frac{\partial{u_0}}{\partial{x_0}} - x_2^{-\alpha_{11}} \frac{\partial{u_1}}{\partial{x_1}}
- x_2^{-\alpha_{22}} \frac{\partial{u_2}}{\partial{x_2}} + \alpha_{22} x_2^{-\alpha_{22} -1} u_2 = 0,  \\
      \frac{\partial{u_0}}{\partial{x_1}}=-\frac{\partial{u_1}}{\partial{x_0}},
      \ \ \ \frac{\partial{u_0}}{\partial{x_2}}=-\frac{\partial{u_2}}{\partial{x_0}}, \\
      \frac{\partial{u_1}}{\partial{x_2}}=\ \ \frac{\partial{u_2}}{\partial{x_1}}.
     \end{array}
  \right.
 \label{alpha_1.2-hyperbolic Riemannian modification}
\end{equation}
 The system $(\ref{alpha_1.2-hyperbolic Riemannian modification})$ may be interpreted as
$(\alpha_{00}, \alpha_{11}, \alpha_{22})$-hyperbolic Riemannian modification of the system $(R)$
 with respect to a Riemannian metric defined on the halfspace $\{x_2 > 0\}$ by formula:
$$
ds^2 = \frac{d{x_0}^2}{ x_2^{2\alpha_{00}}} + \frac{d{x_1}^2}{ x_2^{2\alpha_{11}}} + \frac{d{x_2}^2}{ x_2^{2\alpha_{22}}}.
$$

The continuity equation $(\ref{anisotropic-divergence-form})$ is written as
\begin{equation}
 x_2^{-\alpha_{00}} \frac{{\partial}^2{h}}{{\partial{x_0}}^2} +
 x_2^{-\alpha_{11}} \frac{{\partial}^2{h}}{{\partial{x_1}}^2} +
 x_2^{-\alpha_{22}} \frac{{\partial}^2{h}}{{\partial{x_2}}^2}
- \alpha_{22} x_2^{-\alpha_{22} -1} \frac{\partial{h}}{\partial{x_2}} =0.
 \label{alpha_1,2-hyperbolic-3}
\end{equation}

The Eq. $(\ref{alpha_1,2-hyperbolic-3})$ may be considered as a generalized anisotropic Weinstein equation
 in the context of the system $(\ref{alpha_1.2-anisotropic-electrostatic-Maxwell-system})$.



\begin{thebibliography}{1}

 \bibitem {Ahlfors:1981} Ahlfors,~L.~V.:
 {M\"{o}bius Transformations in Several Dimensions}. Ordway Lectures
in Mathematics. University of Minnesota, Minneapolis (1981)

 \bibitem {AkinLeut:1994} Akin,~\"{O}., Leutwiler,~H.:
{On the invariance of the solutions of the Weinstein equation under M\"{o}bius transformations.}
 Classical and Modern Potential Theory and Applications,
{NATO ASI Series (Series C: Mathematical and Physical Sciences)}, vol. 430,
 pp. 19--29. Springer, Dordrecht (1994)

 \bibitem {Aksenov:2005} Aksenov,~A.~V.:
 {Linear differential relations between solutions of the class of Euler-Poisson-Darboux equations}.
 {J. Math. Sci.}  \textbf{130}(5), 4911--4940 (2005)

 \bibitem {Aksenov:2017} Aksenov,~A.~V.:
  {A class of exact solutions of the axisymmetric Laplace-Beltrami equation} [in Russian].
{Differ. Equations}  \textbf{53}(11), 1571--1572 (2017)


\bibitem {ArnoldGeom} Arnold,~V.~I.:
Geometrical Methods in the Theory of Ordinary Differential
Equations, Series: Grundlehren der mathematischen Wissenschaften,
\textbf{250}. 2nd edn. Springer, New York (1988)

 \bibitem{Auscher:II} Auscher,~P., Ros\'{e}n,~A.:
 {Weighted maximal regularity estimates and solvability of non-smooth elliptic systems II},
{Analysis and PDE} \textbf{5}(5), 983--1061 (2012)

\bibitem {BorisTar:1979} Borisenko,~A.~I., Tarapov,~I.~E.:
 {Vector And Tensor Analysis With Applications},
   Dover Publications, New York (1979)

\bibitem {BornWolf:2003} Born,~M., Wolf,~E.:
 {Principles of Optics},
  7th edition,  Cambridge University Press, Cambridge (2003)

 \bibitem {BraDelSom:1982} Brackx,~F., Delange,~R., Sommen,~F.:
 {Clifford Analysis}, Research Notes in Mathematics, vol. 76.
 Pitman, Boston  (1982)

 \bibitem{Brelot-Collin:1973} Brelot-Collin,~B., Brelot,~M.:
{Allure \`{a} la fronti\`{e}re des solutions positives de  l`\'{e}quation de Weinstein
  $L_k (u)= \Delta u + \frac{k}{x_n} \frac{\partial{u}}{\partial{x_n}} = 0$
 dans le demi-espace $E(x_n > 0)$ de $\mathbb R^n$ $(n \geq 2)$}.
Acad. Roy. Belg. Bull. Cl. Sci. \textbf{59}(5), 1100--1117 (1973)

  \bibitem {Br:2003} Bryukhov,~D.~A.:
  {Axially symmetric generalization of the Cauchy-Riemann system and modified Clifford analysis} (2003).
   https://arxiv.org/abs/math/0302186v1  [math.CV]

 \bibitem {BrW:2011}  Bryukhov,~D.:
 {The independent and dependent reduced quaternionic variables in $\mathbb R^3$ and some applications.}
 In: Proceedings of the ICCA9, Bauhaus University, Weimar (2011)

 \bibitem{BrH:2011} Bryukhov,~D.:
 {Modified quaternionic analysis in $\mathbb R^3$ and generalizations of conformal mappings of the second kind.}
In: AIP Conference Proceedings, vol. 1389, New York, pp. 248--251
(2011)

\bibitem {BrKaeh:2016} Bryukhov,~D., K\"{a}hler,~U.:
{The static Maxwell system in three dimensional axially symmetric
inhomogeneous media and axially symmetric generalization of the
Cauchy-Riemann system}. Adv. Appl. Clifford Algebras \textbf{27}(2),
993--1005 (2017)

\bibitem {Br:Hefei2020} Bryukhov,~D.:
{Electrostatic fields in some special inhomogeneous media and new
generalizations of the Cauchy-Riemann system}. Adv. Appl. Clifford
Algebras \textbf{31}, 61 (2021)

\bibitem{Carslaw} Carslaw,~H.~S., Jaeger,~J.~C.:
{Conduction of Heat in Solids}. 2nd edn., Clarendon Press, Oxford
(1959)

\bibitem{Chew} Chew,~W.~C.:
{Waves and Fields in Inhomogeneous Media}. Van Nostrand Reinhold,
New York (1990), reprinted by IEEE Press, New York (1995)

 \bibitem {ColSabStruppa:2009} Colombo,~F., Sabadini,~I., Struppa,~D.~C.:
 {Slice monogenic functions}, Israel J. Math. \textbf{171}, 385--403 (2009)

  \bibitem{ColSabStruppa:2016} Colombo,~F., Sabadini,~I., Struppa,~D.~C.:
  {Entire Slice Regular Functions}. SpringerBriefs in Mathematics, Springer (2016)

  \bibitem {Colton} Colton,~D.:
   {Arthur Erd\'{e}lyi 1908-1977}.
  {Bull. London Math. Soc.}  \textbf{11}, 191--207 (1979)

\bibitem {Cullen:1965} Cullen,~C.~G.:
 {An integral theorem for analytic intrinsic functions on quaternions}, Duke Math. J. \textbf{32}, 139--148 (1965).


 \bibitem{Del:2007} Delanghe,~R.:
 {On homogeneous polynomial solutions of the Riesz system and their harmonic potentials}.
 {Complex Var. Elliptic Equ.} \textbf{52}(10-11), 1047--1062 (2007)

\bibitem {Dinh:2019} Dinh,~D.~C.:
 { $(m,h)$-monogenic functions related to axially symmetric Helmholtz equations}.
Adv. Appl. Clifford Algebras \textbf{29}, 107 (2019)

\bibitem {DinhTuyet:2020} Dinh,~D.~C., Tuyet,~L.~T.:
 {Representation of Weinstein $k$-monogenic functions by differential operators}.
Complex. Anal. Oper. Theory \textbf{14}, 20 (2020)

 \bibitem {Dinh:2021} Dinh,~D.~C.:
  {The existence of Cauchy kernels of Kravchenko-generalized Dirac operators}.
 Adv. Appl. Clifford Algebras \textbf{31}, 2 (2021)

\bibitem {Dzhaiani} Dzhaiani,~G.~V.:
 {The Euler-Poisson-Darboux Equation} [in Russian].
 Izd. Tbilisskogo Gos. Univ., Tbilisi (1984)

   \bibitem{Eisenhart:Riem} Eisenhart,~L.~P.:
   {Riemannian Geometry}, 2nd edn.
   Princeton University Press, Princeton, NJ (1949)

  \bibitem {Erd:1956} Erd\'{e}lyi,~A.:
    {Singularities of generalized axially symmetric potentials},
   {Comm. Pure Appl. Math.}  \textbf{9}, 403--414 (1956).

 \bibitem {BatEr} Erd\'{e}lyi,~A., Magnus,~W., Oberhettinger,~F., Tricomi,~F.~G.:
{Tables of Integral Transforms}. Based, in Part, on Notes Left by Harry Bateman and Compiled by the Staff of the Bateman Manuscript Project, \textbf{I}.
McGraw-Hill Co., New York (1954)

\bibitem {ErLeut:2007} Eriksson,~S.-L., Leutwiler,~H.:
 {Hyperbolic function theory}.
  Adv. Appl. Clifford Algebras \textbf{17}, 437--450 (2007)

 \bibitem {ErOrel:2009} Eriksson,~S.-L., Orelma,~H.:
{Hyperbolic function theory in the Clifford algebra $Cl_{n+1,0}$}.
  Adv. Appl. Clifford Algebras \textbf{19}, 283--301 (2009)

 \bibitem {ErOrel:2011} Eriksson,~S.-L., Orelma,~H.:
{A hyperbolic interpretation of Cauchy-type kernels in hyperbolic function theory}.
In: {Hypercomplex Analysis and Applications}, Series: Trends in Mathematics,
 pp. 43--59. Springer, Basel (2011).

 \bibitem {ErOrel:2014} Eriksson,~S.-L., Orelma,~H.:
{Hyperbolic Laplace operator and the Weinstein equation in $\mathbb R^3$}.
  Adv. Appl. Clifford Algebras \textbf{24}(1), 109--124 (2014)

 \bibitem {ErOrel:2014-Vekua} Eriksson,~S.-L., Orelma,~H.:
{On Vekua systems and their connections to hyperbolic function theory in the plane}.
  Adv. Appl. Clifford Algebras \textbf{24}(4), 1027--1038 (2014)

 \bibitem {ErOrelSommen:2016} Eriksson,~S.-L., Orelma,~H., Sommen,~F.:
 {Vekua systems in hyperbolic harmonic analysis}.
 {Complex. Anal. Oper. Theory} \textbf{10}(2),  251--265 (2016)

 \bibitem {ErOrelVie:2017} Eriksson,~S.-L., Orelma,~H., Vieira~N.:
{Two-sided hypergenic functions}.
  Adv. Appl. Clifford Algebras \textbf{27}(1), 111--123 (2017)


 \bibitem{Fueter:1934} Fueter,~R.:
{Die funktionentheorie der differentialgleichungen $\Delta u=0$ und
$\Delta\Delta u=0$ mit vier reellen variablen}.
 Comment. Math. Helv. \textbf{7}, 307--330 (1934/35)

 \bibitem {GentStruppa:2006}  Gentili~G., Struppa,~D.~C.:
 {A new approach to Cullen-regular functions of a quaternionic variable}, C. R. Acad. Sci. Paris \textbf{342}, 741--744 (2006)

 \bibitem {Gilbert:1960} Gilbert,~R.~P.:
 {On the singularities of generalized axially symmetric potentials},
 {J. Rat. Mech. Anal.}  \textbf{6(1)}, 171--176 (1960)

\bibitem {Gilmore:1993} Gilmore,~R.:
{Catastrophe Theory for Scientists and Engineers}.
 Dover Publications, New York (1993)

 \bibitem {GrPlaksa:2009} Gryshchuk,~S.~V., Plaksa,~S.~A.:
 {Integral representations of generalized axially symmetric potentials in a simply connected domain}  [in Russian].
{Ukr. Math. J.}  \textbf{61}(2), 160--177 (2009)

\bibitem {GuHaSp:2008} G\"{u}rlebeck,~K., Habetha,~K., Spr\"{o}{\ss}ig,~W.:
{Holomorphic Functions in the Plane and $n$-Dimensional Space}.
 Birkh\"{a}user, Basel (2008)

\bibitem {GuHaSp:2016} G\"{u}rlebeck,~K., Habetha,~K., Spr\"{o}{\ss}ig,~W.:
{Application of Holomorphic Functions in Two and Higher Dimensions}.
 Birkh\"{a}user, Basel (2016)

  \bibitem{GM:2009} G\"{u}rlebeck,~K., Morais,~J.:
 {Local properties of monogenic mappings}.
 In: AIP Conference Proceedings, vol. 1168(1), New York, pp. 797--800 (2009)

  \bibitem {GM:2009-CUBO} G\"{u}rlebeck,~K., Morais,~J.:
 {On mapping properties of monogenic functions}.
 {CUBO A Math. J.} \textbf{11}(1), 73--100 (2009)

 \bibitem {GM:2010} G\"{u}rlebeck,~K., Morais,~J.:
{Geometric characterization of $M$-conformal mappings}.
In: {Geometric Algebra Computing in Engineering and Computer Science},
 pp. 327--342. Springer, London (2010).

 \bibitem {GM:2011} G\"{u}rlebeck,~K., Morais,~J.:
{On orthonormal polynomial solutions of the Riesz system in $\mathbb R^3$.}
In: {Recent Advances in Computational and Applied Mathematics},
 pp. 143--158. Springer, Dordrecht (2011)

 \bibitem{Hasan:2007} Hasanov,~A.:
 {Fundamental solutions of the generalized bi-axially symmetric Helmholtz equation}.
 {Complex Var. Elliptic Equ.} \textbf{52}(8), 673--683 (2007)

 \bibitem {HilCV:1999} Hilbert,~D., Cohn-Vossen,~S.:
 {Geometry and the Imagination},
 2nd edn. AMS Chelsea Pub.,  Providence, R.I. (1999)


 \bibitem {Huber:1954} Huber,~A.:
{On the uniqueness of generalized axially symmetric potentials}.
Annals of Mathematics \textbf{60}(2), 351--358 (1954)

\bibitem {Jackson:1999} Jackson,~J.~D.:
{Classical Electrodynamics},
 3rd edn. John Wiley and Sons, New York (1999)

 \bibitem {KarNieto:2011} Karimov,~E.~T., Nieto,~J.~J.:
 The Dirichlet problem for a 3D elliptic equation with two singular coefficients.
{Comput. and Math. Appl.} \textbf{62}(1), 214--224 (2011)

  \bibitem {KhmKravOv:2010} Khmelnytskaya,~K.~V., Kravchenko,~V.~V., Oviedo,~H.:
 {On the solution of the static Maxwell system in axially symmetric inhomogeneous media}.
 {Math. Methods Appl. Sci.} \textbf{33}(4), 439--447 (2010)

 \bibitem {Konopel} Konopelchenko,~B.~G., Ortenzi,~G.:
 {Elliptic Euler-Poisson-Darboux Equation, Critical Points and Integrable Systems}.
 {J. Phys. A: Math. Theor.} \textbf{46}(48), 485204 (2013)

 \bibitem {Koren:2002} Korenev,~B.~G.:
{Bessel Functions and Their Applications, Analytical Methods and Special Functions}.
Taylor and Francis, Boca Raton (2002)

  \bibitem {KravKrav:2003} Kravchenko,~V.~G., Kravchenko,~V.~V.:
 {Quaternionic factorization of the Schr\"{o}dinger operator and its applications to some first order systems of mathematical physics}.
 {J. Phys. A: Math. Gen.} \textbf{36}(44), 11285--11297 (2003)


 \bibitem {Krav:2009} Kravchenko,~V.~V.:
{Applied Pseudoanalytic Function Theory}, Series: Frontiers in Mathematics.
 Birkh\"{a}user, Basel (2009)

   \bibitem {KravTach:2003} Kravchenko,~V.~V., Tachiquin,~M.:
  {On a quaternionic reformulation of the Dirac equation and its relationship with Maxwell's system}.
  {Bulletin de la Soci\'{e}t\'{e} des Sciences et des Lettres de {\L}\'{o}d\'{z} \textbf{53}. S\'{e}rie: Recherches sur les d\'{e}formations} \textbf{41}, 101--114 (2003)


 \bibitem {Krivenkov:1957-3} Krivenkov,~Yu.~P.:
{On a representation of the solutions of the Euler--Poisson--Darboux
equation} [in Russian]. {Dokl. Akad. Nauk SSSR} \textbf{116}(3),
351--354 (1957)

\bibitem{Kuzmin} Kuzmin,~R.~0.:
{Bessel Functions} [in Russian]. GTTI, Leningrad--Moscow (1933)

 \bibitem {Landis} Landis,~E.~M.:
 {Second Order Equations of Elliptic and Parabolic Type}. American Mathematical Society, vol. 171.
  Providence, R.I. (1998)

\bibitem{LavSh} Lavrentyev,~M.~A., Shabat,~B.~V.:
{Methods of the Theory of Functions of a Complex Variable}
[in Russian], 5th edn. Nauka, Moscow (1987)

\bibitem{LavSh-Hydro} Lavrentyev,~M.~A., Shabat,~B.~V.:
{Hydrodynamics Problems and Their Mathematical Models}
[in Russian]. Nauka, Moscow (1973)

  \bibitem {Leut:CV17} Leutwiler,~H.:
 {Modified Clifford analysis}.
 {Complex Var. Theory Appl.} \textbf{17}, 153--171 (1992)

 \bibitem {Leut:CV20} Leutwiler,~H.:
 {Modified quaternionic analysis in $\mathbb R^3$}.
 {Complex Var. Theory Appl.} \textbf{20}, 19--51 (1992)

\bibitem {Leut:More95} Leutwiler,~H.:
 {More on modified quaternionic analysis in $\mathbb R^3$}.
 {Forum Math.} \textbf{7}, 279--305 (1995)

 \bibitem {Leut:Rud96} Leutwiler,~H.:
 {Rudiments of function theory in $\mathbb R^3$}.
 {Expositiones Math.} \textbf{14}, 97--123 (1996)


 \bibitem {Leut:2000} Leutwiler,~H.:
 {Quaternionic analysis in $\mathbb R^3$ versus its hyperbolic modification.} In:
{NATO Science Series II: Mathematics, Physics and Chemistry}, vol.
25, pp. 193--211. Kluwer, Dordrecht (2001)

 \bibitem {LeZe:CMFT2004} Leutwiler,~H., Zeilinger,~P.:
{On quaternionic analysis and its modifications}.
  {Computational Methods and Function Theory} \textbf{4}(1), 159--183 (2004)

\bibitem {Leut:2017-CAOT} Leutwiler,~H.:
 {An orthonormal system of modified spherical harmonics},
 {Complex Analysis and Operator Theory} \textbf{11}(5), 1241--1251 (2017)

\bibitem {Leut:2019-AACA} Leutwiler,~H.:
{More on modified spherical harmonics}. {Adv. Appl. Clifford
Algebras} \textbf{29}, 70 (2019)

\bibitem {Leut:2021-MMAS} Leutwiler,~H.:
 {Further results on modified harmonic functions in three dimensions},
 {Math Meth Appl Sci.}; 1--9 (2021). https://doi.org/10.1002/mma.7277


 \bibitem {Mal:2000} Malonek~H.:
  {Monogenic functions and $M$-conformal mappings.} In:
 {NATO Science Series II: Mathematics, Physics and Chemistry}, vol. 25,
pp. 213--222. Kluwer, Dordrecht (2001)

 \bibitem {Melnic:1975} Mel'nichenko,~I.~P.:
 {The representation of harmonic mappings by monogenic functions}.
{Ukr. Math. J.}  \textbf{27}(5), 499--505 (1975)

 \bibitem {MelnicPlaksa:1997} Mel'nichenko,~I.~P., Plaksa~S.~A.:
 {Potential fields with axial symmetry and algebras of monogenic functions of vector variable. III}.
{Ukr. Math. J.}  \textbf{49}(2), 253--268 (1997)

  \bibitem {MorAvetGuer:2013} Morais,~J., Avetisyan,~K., G\"{u}rlebeck,~K.:
 {On Riesz systems of harmonic conjugates  in $\mathbb R^3$}.
 {Math. Methods Appl. Sci.} \textbf{36}(12), 1598--1614 (2013)

 \bibitem {Feshbach} Morse,~P., Feshbach,~H.:
  {Methods of Theoretical Physics, Part I}.
   McGraw-Hill, New York (1953)

 \bibitem {PenaSommen:2012} Pe\~{n}a Pe\~{n}a,~D., Sommen,~F.:
 {Vekua-type systems related to two-sided monogenic functions}.
 {Complex. Anal. Oper. Theory} \textbf{6}(2), 397--405 (2012)

\bibitem {PenaSabSommen:2017} Pe\~{n}a Pe\~{n}a,~D., Sabadini~I., Sommen,~F.:
 {On two-sided monogenic functions of axial type}.
 {Mosc. Math. J.} \textbf{17}(1), 129--143 (2017)

  \bibitem {Plaksa:2001} Plaksa,~S.~A.:
 {Dirichlet problem for an axisymmetric potential in a simply-connected domain of the meridian plane}. Ukr. Math. J. \textbf{53}(12), 1976--1997 (2001)

 \bibitem {Plaksa:2003} Plaksa,~S.~A.:
 {Dirichlet problem for the Stokes flow function in a simply-connected domain of the meridian plane}. Ukr. Math. J. \textbf{55}(2), 241--281 (2003)

 \bibitem {Plaksa:2019} Plaksa,~S.~A.:
  {Axial-symmetric potential flows.} In:
 {Models and Theories in Social Systems}, pp. 165--195.  Springer, Berlin (2019)


 \bibitem {Polozhii:1973} Polozhii,~G.~N.:
 {Theory and Application of $p$-Analytic and $(p, q)$-Analytic Functions} [in Russian].
2nd edn., Kiev, Naukova Dumka (1973)

 \bibitem {PolZait:Ordin-2017} Polyanin,~A.~D., Zaitsev,~V.~F.:
 { Handbook of Exact Solutions for Ordinary Differential Equations}, 2nd edn.
 Chapman and Hall/CRC Press, Boca Raton--London (2003)

 \bibitem {PosStew} Poston,~T., Stewart,~I.:
{Catastrophe Theory and Its Applications}, Series: Surveys and Reference Works in Mathematics, vol. 2. Pitman, London (1978)

\bibitem {QuinnWeinacht} Quinn,~D.~W., Weinacht,~R.~J.:
 {Boundary value problems in generalized biaxially symmetric potential theory}.
 {J. of Diff. Equ.} \textbf{21}(1), 113--133 (1976)

\bibitem{Shvartsburg}  Shvartsburg,~A.~B.:
{Geometrical Optics in Non-Linear Wave's Theory}
[in Russian]. Nauka, Moscow (1976)

 \bibitem {Sivukhin:3} Sivukhin,~D.~V.:
 {A Course of General Physics}. In 5 volumes. Vol. III {Electricity} [in Russian],
   6th edn. Fizmatlit, MIPT, Moscow (2015)

 \bibitem {Smirnov} Smirnov,~V.~I.:
 {A Course of Higher Mathematics}, Vol. \textbf{2}, {Advanced Calculus}.
Adiwes International Series in Mathematics, Pergamon Press, Oxford (1964)



 \bibitem {Sommen:1988} Sommen,~F.:
 {Special functions in Clifford analysis and axial symmetry}.
 {J. Math. Anal. Appl.} \textbf{130}(1), 110--133 (1988)

 \bibitem {Spr:1999} Spr\"{o}{\ss}ig,~W.:
 {On operators and elementary functions in Clifford analysis}.
 {Z. Anal. Anwend.} \textbf{18}(2), 349--360 (1999)

\bibitem{Suetin} Suetin,~P.~K.:
{Classical Orthogonal Polynomials}
[in Russian]. 2nd edn., Nauka, Moscow (1979)

\bibitem{SvetGub} Svetov,~B.~S., Gubatenko,~V.~P.:
{Analytical Solutions of Electrodynamic Problems}
[in Russian]. Nauka, Moscow (1988)

 \bibitem {Szeg:1967} Szeg\"{o},~G.:
{Orthogonal Polynomials},
  Amer. Math. Soc. Colloq., vol. 23. 3rd edn, American Mathematical Society, Providence, RI (1967)



  \bibitem {PolBr} Van der Pol,~B., Bremmer,~H.:
 {Operational Calculus based on the Two-Sided Laplace Integral}. 2nd
 edn. Cambridge University Press, Cambridge (1955)


 \bibitem {Wait:1982} Wait,~J.~R.:
{Geo-Electromagnetism}.
 Academic Press, New York (1982)

 \bibitem {Watson:1944} Watson,~G.~N.:
 {A Treatise on the Theory of Bessel Functions},
 2nd edn. Part of Cambridge Mathematical Library. Cambridge University Press, Cambridge (1995)

 \bibitem {Weinstein:1948-flows} Weinstein,~A.:
 {On axially symmetric flows}.
 {Quart. Appl. Math.} \textbf{5}, 429--444 (1948)

 \bibitem {Weinstein:1948-int} Weinstein,~A.:
 {Discontinuous integrals and generalized potential theory}.
 {Trans, Amer. Math. Soc.} \textbf{63}(2), 342--354 (1948)

 \bibitem {Weinstein:1953} Weinstein,~A.:
 {Generalized axially symmetric potential theory}.
 {Bull. Amer. Math. Soc.}  \textbf{59}(1), 20--38 (1953)


 \bibitem {Zwillinger} Zwillinger,~D.:
 {Handbook of Differential Equations}, vol. 1,
 3rd edn.  Academic Press, San Diego (1998)

\end{thebibliography}
\end{document}